\DeclarePairedDelimiterX{\Set}[2]{\{}{\}}{\,{#1}\;\delimsize|\;{#2}\,}
\algnewcommand{\algorithmicinput}{\textbf{Input:}}
\algnewcommand{\Input}{\item[\algorithmicinput]}
\algnewcommand{\algorithmicoutput}{\textbf{Output:}}
\algnewcommand{\Output}{\item[\algorithmicoutput]}
\algnewcommand{\algorithmicbreak}{\textbf{break}}
\algnewcommand{\Break}{\State{\algorithmicbreak}}
\algnewcommand{\nil}{\textbf{nil}}
\setlist{font=\upshape,leftmargin=*}
\setlist[1]{labelindent=\parindent}
\setlist[enumerate,1]{label={(\arabic*)}}
\def\csname ver@etex.sty\endcsname{3000/12/31}
\newtheorem{theorem}{Theorem}[section]
\newtheorem{lemma}[theorem]{Lemma}
\newtheorem{proposition}[theorem]{Proposition}
\newtheorem{corollary}[theorem]{Corollary}
\newtheorem{claim}[theorem]{Claim}
\theoremstyle{definition}
\newtheorem{remark}[theorem]{Remark}
\title{%
  Algebraic Algorithms for Fractional Linear Matroid Parity\\
  via Non-commutative Rank\thanks{
    A preliminary version of this paper appeared at the 34th Annual ACM-SIAM Symposium on Discrete Algorithms (SODA '23).
  }
}
\author{%
  Taihei Oki\thanks{
    Graduate School of Information Science and Technology, the University of Tokyo, Tokyo 113-8656, Japan. Current address: Institute for Chemical Reaction Design and Discovery (ICReDD), Hokkaido University, Sapporo 001-0021, Japan. E-mail: \url{oki@icredd.hokudai.ac.jp}
  }%
  \and Tasuku Soma\thanks{
    Department of Mathematics, Massachusetts Institute of Technology, Cambridge, MA 02139. Current address: The Instutite of Statistical Mathematics, Tokyo 190-8562, Japan. E-mail: \url{soma@ism.ac.jp}
  }
}
\newcommand{\caA}{\mathcal{A}}
\newcommand{\caB}{\mathcal{B}}
\newcommand{\N}{\mathbb{N}}
\newcommand{\Q}{\mathbb{Q}}
\newcommand{\R}{\mathbb{R}}
\newcommand{\K}{\mathbb{K}}
\newcommand{\F}{\mathbb{F}}
\newcommand{\bfzero}{\mathbf{0}}
\newcommand{\ones}{\mathbf{1}}
\newcommand{\blowup}[2]{{#1}^{\{#2\}}}
\newcommand{\zho}{{\bigl\{0, \frac12, 1\bigr\}}}
\newcommand{\CLVtime}{m^4n^\omega}
\newcommand{\GPtime}{m^4n^{\omega+3}}
\DeclareMathOperator{\ncrank}{nc-rank}
\DeclareMathOperator{\sgn}{sgn}
\DeclareMathOperator{\GL}{GL}
\DeclareMathOperator{\supp}{supp}
\DeclareMathOperator{\pf}{pf}
\DeclareMathOperator{\im}{Im}
\DeclarePairedDelimiter{\floor}{\lfloor}{\rfloor}
\DeclarePairedDelimiter{\agbr}{\langle}{\rangle}
\newcommand{\fsbr}[1]{\text{\textup{$\mathrm{<}\mkern-2.3mu\llap(\,#1\mathrm{>}\mkern-4.6mu\llap)$}}\,}
\newcommand{\modify}[1]{#1}
\begin{document}

\maketitle

\begin{abstract}
Matrix representations are a powerful tool for designing efficient algorithms for combinatorial optimization problems such as matching, and linear matroid intersection and parity.
In this paper, we initiate the study of matrix representations using the concept of non-commutative rank (nc-rank), which has recently attracted attention in the research of Edmonds' problem.
We reveal that the nc-rank of the matrix representation of linear matroid parity corresponds to the optimal value of fractional linear matroid parity: a half-integral relaxation of linear matroid parity.
Based on our representation, we present an algebraic algorithm for the fractional linear matroid parity problem by building a new technique to incorporate the search-to-decision reduction into the half-integral problem represented via the nc-rank.
We further present a faster divide-and-conquer algorithm for finding a maximum fractional matroid matching and an algebraic algorithm for finding a dual optimal solution.
They together lead to an algebraic algorithm for the weighted fractional linear matroid parity problem.
Our algorithms are significantly simpler and faster than the existing algorithms.
\end{abstract}

\section{Introduction}\label{sec:introduction}
Matrix representations of combinatorial optimization problems have been a powerful tool for designing efficient algorithms.
This line of research originates in Tutte's work~\cite{Tutte1947} for the matching problem, where his matrix representation is now known as the \emph{Tutte matrix}.
Edmonds~\cite{Edmonds1967} dealt with a simpler representation for the bipartite case, and this result was later extended by Tomizawa and Iri~\cite{Tomizawa1974} to the linear matroid intersection problem.
Unifying these works, Lovász~\cite{Lovasz1979} gave a matrix representation for the \emph{linear matroid parity problem}~\cite{Lawler1976} (also called the \emph{linear matroid matching problem}), a common generalization of the matching and linear matroid intersection problems.
This problem is to find a linearly independent subset of $L = \{\ell_1, \dotsc, \ell_m\}$ with maximum cardinality, where each $\ell_i \subseteq \K^n$ is a given two-dimensional vector subspace over a field $\K$, called a \emph{line}.

These matrix representations are of the form \modify{of} so called \emph{linear} (\emph{symbolic}) \emph{matrices}:
\begin{align}\label{def:linear}
    A = \sum_{i=1}^m x_i A_i,
\end{align}
where $x_1, \dotsc, x_m$ are distinct indeterminates (symbols), and $A_1, \dotsc, A_m$ are constant matrices over a field $\K$ determined from a given instance of the problem.
Intuitively, each $x_i$ corresponds to \modify{an} element of the problem's ground set, and setting $x_i$ to zero means removing the element from consideration.
Every $A_i$ is (i) a matrix having only one nonzero entry for the bipartite matching problem, (ii) a skew-symmetric matrix having two nonzero entries for the matching problem, (iii) a rank-one matrix for the linear matroid intersection problem, and (iv) a rank-two skew-symmetric matrix for the linear matroid parity problem.
The rank of $A$ (as a matrix over the rational function field $\K(x_1, \dotsc, x_m)$) coincides with the size of maximum bipartite matchings and maximum common independent sets, and with twice the size of maximum matching and maximum independent parity set.

The problem of computing the rank of a given linear matrix is called \emph{Edmonds' problem}~\cite{Edmonds1967}.
When $|\K|$ is large enough, we can solve Edmonds' problem by substituting random values drawn from $\K$ into the indeterminates $x_1, \dotsc, x_m$ and computing the rank of the obtained constant matrix; the probability of success can be bounded by the Schwartz--Zippel lemma~\cite{Zippel1979,Schwartz1980}.
This remarkably simple idea leads to efficient randomized polynomial-time algorithms for various combinatorial optimization problems via matrix representations.
These algorithms are called \emph{algebraic algorithms}.
Indeed, the current fastest algorithm for linear matroid intersection~\cite{Harvey2009} and parity~\cite{Cheung2014} are algebraic algorithms.

A major open question in Edmonds' problem is to develop a \emph{deterministic} polynomial-time algorithm for general $A_i$; the existence of such an algorithm would imply non-trivial circuit complexity lower bounds~\cite{Kabanets2004}.
To shed light on this question, recent studies~\cite{Hrubes2015,Ivanyos2018,Garg2020,Hamada2021} focused on the \emph{non-commutative} version of Edmonds' problem.
This problem is to compute the \emph{non-commutative rank} (nc-rank) of a linear matrix: the rank when the indeterminates $x_1, \dotsc, x_m$ are regarded as ``pairwise non-commutative'', i.e., $x_i x_j \ne x_j x_i$ if $i \ne j$.
More precisely, the nc-rank of $A$, denoted as $\ncrank A$, is the inner rank as a matrix over the free ring $\K\agbr{x_1, \dotsc, x_m}$ generated by non-commutative indeterminates $x_1, \dotsc, x_m$, and is the rank as a matrix over the free skew field $\K\fsbr{x_1, \dotsc, x_m}$~\cite{Cohn1985}; see~\cite{Cohn1995,Fortin2004,Garg2020}.
An equivalent and elementary definition of nc-rank involves the \emph{blow-up} of linear matrices.
For $d \ge 1$, the $d$th-order \emph{blow-up} of an $n \times n$ linear matrix $A$ is a $dn \times dn$ linear matrix defined by
\begin{align}\label{def:blow-up}
    \blowup{A}{d} = \sum_{i=1}^m X_i \otimes A_i,
\end{align}
where $X_1, \dotsc, X_m$ are $d \times d$ matrices of distinct indeterminates in their entries, and $\otimes$ denotes the Kronecker product.
Then, $\ncrank A$ is equal to $\frac1d \rank \blowup{A}{d}$ for $d \ge n-1$~\cite{Derksen2017}.
The nc-rank can also be defined via a min-max type formulation~\cite{Fortin2004}.
Recent breakthrough results~\cite{Garg2020,Ivanyos2018,Hamada2021} show that non-commutative Edmonds' problem is solvable in deterministic polynomial time.

Given the recent advances in the studies of nc-rank, it is quite natural to ask:
\emph{Can we devise an efficient and simple randomized algorithm for combinatorial optimization problems via nc-rank?}
However, to the best of our knowledge, no connection between nc-rank and natural combinatorial optimization problems is known, which is in contrast to the aforementioned matching, linear matroid intersection, and linear matroid parity problems.

\subsection{Contributions}
In this paper, we initiate the study of algebraic algorithms via nc-rank.
Our contributions are summarized as follows.

\subsubsection{Matrix Representation of Fractional Linear Matroid Parity via Nc-rank}
We first reveal that the nc-rank of the matrix representation of the linear matroid parity problem is equal to twice the optimal value of the corresponding \emph{fractional linear matroid parity problem} (also known as the \emph{fractional linear matroid matching problem}).
This problem, introduced by Vande Vate~\cite{VandeVate1992} \modify{and Chang et al.~\cite{Chang2001a,Chang2001b}}, is a continuous relaxation of the linear matroid parity problem formally defined as follows.\footnote{\modify{
    Vande Vate~\cite{VandeVate1992} introduced the fractional matroid parity problem for general finite matroids, and subsequently, Chang et al.~\cite{Chang2001a,Chang2001b} dealt with a more general setting including matroids with infinite ground sets yet finite ranks.
    The fractional linear matroid parity problem is a special case when the matroid is the linear matroid whose ground set is all $n$-dimensional nonzero vectors over $\K$.
}}
Given lines $\ell_1, \dots, \ell_m$ as two dimensional subspaces in $\K^n$, a nonnegative vector $y = (y_1, \dotsc, y_m) \in \R^m$ is called a \emph{fractional matroid matching} if $\sum_{i=1}^m \dim(\ell_i \cap S) y_i \leq \dim S$ for any subspace $S$ in $\K^n$.
The set of all fractional matroid matchings is called the \emph{fractional matroid parity polytope} and it is a half-integral polytope contained in ${[0, 1]}^m$ whose integral points correspond to the feasible solutions of the linear matroid parity problem.
The fractional matroid parity problem is to find a fractional matroid matching maximizing $|y| \coloneqq \sum_{i=1}^m y_i$.
The dual problem is called the \emph{minimum 2-cover problem}.
A combinatorial algorithm for finding a maximum fractional matroid matching as well as a minimum 2-cover was given by Chang et al.~\cite{Chang2001a,Chang2001b}; see \Cref{sec:fractional-linear-matroid-parity} for detail.
We prove our claim by establishing a correspondence between dual solutions of the fractional linear matroid parity and non-commutative Edmonds' problems.

\subsubsection{Algebraic Algorithm for Fractional Linear Matroid Parity}
\paragraph{Simple algebraic algorithm.}
The above result provides a ``matrix representation'' that involves the nc-rank for the fractional linear matroid parity problem.
When $|\K|$ is large, one can compute the nc-rank of an $n \times n$ linear matrix $A$ by substituting random values into the entries of $X_1, \dotsc, X_m$ in $\blowup{A}{n-1}$ with high probability.
Hence, our matrix representation immediately yields an algebraic algorithm to compute the \emph{size} of a maximum fractional matroid matching.
This algorithm, however, does not output an actual maximum fractional matroid matching.
In the known matrix representations of matching and linear matroid intersection and parity, this issue can be addressed with the \emph{search-to-decision} reduction.
We can test whether each element, say $i$, is contained in an optimal solution by simply checking whether setting $x_i = 0$ decreases the rank.
Thus, one can also construct an optimal solution to these problems.
But this is possible because these problems have $\{0,1\}$-optimal solutions.
Since the fractional linear matroid parity problem may have a half-integral solution, it is unclear how to apply this reduction to our nc-rank-based matrix representation.

To this end, we establish a novel correspondence between the rank of $X_i$ in the blow-up and the $i$th component of a fractional matroid matching.
Letting $A$ be the matrix representation of this problem, we show that $\ncrank A = \frac12 \rank \blowup{A}{2}$ holds, i.e., the second-order blow-up is enough for attaining the nc-rank.
Note that every $X_i$ in $\blowup{A}{2}$ is of size $2 \times 2$.
Roughly speaking, we show that restricting the rank of $X_i$ to $0$, $1$, or $2$ corresponds to setting an upper bound on the $i$th component $y_i$ of a solution $y \in \R^m$ to $0$, $\frac12$, or $1$, respectively.
Our proof is based on the expansion formula of the Pfaffian of skew-symmetric matrices~\cite{Ishikawa1995} and the characterization of extreme points in fractional matroid parity polytopes~\cite{Chang2001b}.
We remark that the formal statement is more subtle because of the absence of characterization of non-extreme half-integral points in the polytope; indeed we do not give a complete correspondence between $\rank X_i$ and $y_i$.
Nevertheless, our weak correspondence suffices to develop an algebraic algorithm that finds an optimal solution via the search-to-decision reduction by focusing on the lexicographically minimum optimal solution.

\paragraph{Faster algebraic algorithm with sparse representation.}
The above simple fractional linear matroid parity algorithm runs in $O(n^\omega + mn^2)$ time, where $2 \le \omega \le 3$ is the matrix multiplication exponent.
This is already faster than the known algorithm~\cite{Chang2001a} that takes $O(\CLVtime)$ time\footnote{To be fair, we remark that the algorithm~\cite{Chang2001a} can work \modify{for general matroids} in a more abstract setting. They assume a certain oracle that finds a basis of the intersection of a given line and flat. In the linear case, this oracle can be implemented to run in $O(n^\omega)$ time by Gaussian elimination. Chang et al.~\cite{Chang2001a} showed that their algorithm makes $O(m^4)$ oracle calls in total. Hence the total running time is $O(\CLVtime)$.}.
We further develop a much faster algorithm using the \emph{sparse matrix representation}, which is another matrix representation of the linear matroid parity problem introduced by Geelen and Iwata~\cite{Geelen2005} that extends a sparse representation of the linear matroid intersection problem~\cite{Harvey2009,Murota2000}; see also~\cite{Matoya2022}.
The current fastest algorithm for linear matroid intersection~\cite{Harvey2009} and parity~\cite{Cheung2014} both perform the search-to-decision reduction with the sparse representations and the divide-and-conquer strategy.
Adapting the same approach, we design a divide-and-conquer algorithm for the fractional linear matroid parity problem using the second-order blow-up of the sparse representation.
The resulting algorithm runs in $O(mn^{\omega-1})$ time.

\paragraph{Finding \modify{a} dominant 2-cover.}
The known algorithm for fractional linear matroid parity~\cite{Chang2001a} also finds the dual optimal solution called the \emph{dominant 2-cover}.
Both a maximum fractional matroid matching and the dominant 2-cover are necessary for the \emph{weighted} fractional matroid parity algorithm of Gijswijt and Pap~\cite{Gijswijt2013}.
Using our matrix representation and dual correspondence, we devise a simple randomized algorithm for finding the dominant 2-cover of fractional linear matroid matching in $O(mn^{\omega+1})$ time.
Combined with the above algebraic algorithm, this leads to an $O(mn^{\omega+4})$-time randomized algorithm for the weighted problem, which improves \modify{the} $O(\GPtime)$ time of Gijswijt and Pap~\cite{Gijswijt2013}\footnote{
    \modify{The algorithm of~\cite{Gijswijt2013} works for general matroids.}
}.

\subsubsection{Polyhedral Aspects of Fractional Linear Matroid Parity}
Before concluding our contributions, we remark that the fractional matroid parity polytope is interesting in its own right.
The fractional matroid parity polytope is defined by \modify{exponentially or} infinitely many linear constraints each of which arises from a subspace of $\K^n$.
This complicated nature of the linear system of the fractional matroid parity polytope makes it difficult to use the ellipsoid algorithm.
Indeed, the only known polynomial-time separation algorithm on the fractional matroid parity polytope is using the weighted algorithm of Gijswijt and Pap~\cite{Gijswijt2013} and the equivalence of separation and optimization.

In the special case that $\K$ is the complex field, the fractional matroid parity polytope coincides with the \emph{rank-two Brascamp-Lieb polytope}~\cite{Bennett2008,Valdimarsson2010}.
Garg et al.~\cite{Garg2018} devised a pseudo-polynomial time (weak) separation oracle for general-rank Brascamp-Lieb polytopes with operator scaling.
However, this does not yield a polynomial time separation algorithm even in the rank-two case.

Another potential approach for separation is submodular minimization over the product of diamond lattices~\cite{Fujishige2022}.
The separation problem can be reduced to minimizing a submodular function defined on a modular lattice $\mathcal{L}_1 \times \dotsb \times \mathcal{L}_m$, where $\mathcal{L}_i$ is the lattice of vector subspaces of a line $\ell_i \subseteq \K^n$.
The algorithm of Fujishige et al.~\cite{Fujishige2022} takes time polynomial in $n$ and $\sum_{i=1}^m |\mathcal{L}_i|$.
The lattice $\mathcal{L}_i$, however, becomes exponentially large or even infinite when $\K$ is an infinite field, hence this algorithm is inapplicable.

\subsection{Related Work and Future Directions}

\paragraph{Algebraic algorithms in combinatorial optimization.}
As mentioned above, algebraic algorithms using linear matrix representations have been developed for matching, linear matroid intersection and parity, which have $\{0,1\}$-optimal solutions.
A notable exception is an algebraic algorithm for $b$-matching~\cite{Gabow2021}.
A $b$-matching is a nonnegative integral vector on the edges of a graph, rather than a $\{0,1\}$-vector.
As a matrix representation of $b$-matching, they used a block matrix each of whose blocks is a low-rank symbolic matrix.
The resulting matrix is not a linear matrix but a quadratic matrix.
The low-rank block structure enables us to encode a maximum $b$-matching with the (standard) rank of the matrix.
We remark that their matrix representation has a pseudo-polynomial size.
On the other hand, our matrix representation for fractional matroid matching can encode half-integral solutions with a polynomial-size matrix using nc-rank.
We expect that matrix representations using nc-rank will become a powerful new tool for optimization problems that exhibit half-integrality.

\paragraph{Algorithms for non-commutative Edmonds' problem.}
We here overview the deterministic polynomial-time algorithms~\cite{Garg2020,Ivanyos2018,Hamada2021} for non-commutative Edmonds' problem.
These fall into three categories.
The first one~\cite{Garg2020} is the \emph{operator scaling} algorithm which extends the well-known Sinkhorn iteration and works when $\K$ is the rational field $\Q$.
The second one~\cite{Ivanyos2018} is based on the concept of the \emph{Wong sequence}, which is a linear algebraic generalization of augmenting paths in bipartite matching and linear matroid intersection.
The last one~\cite{Hamada2021} solves the dual problem of nc-rank as an optimization over a \emph{CAT(0) space}.

The operator scaling and Wong sequence algorithms have been originally proposed to compute the rank of linear matrices such that $\rank A = \ncrank A$ holds~\cite{Gurvits2004,Ivanyos2015}.
Tailored algorithms have been designed for some special classes of linear matrices with this property.
For example, the rank and the nc-rank of linear matrices $A$ such that $\rank A_i = 1$ for all $i$ are the same~\cite{Lovasz1989}.
Thus, any linear matroid intersection algorithm can be regarded as an algorithm for commutative and non-commutative Edmonds' problems with rank-one coefficients.
A generic \emph{partitioned matrix} with $2 \times 2$ submatrices provides another class of such property~\cite{Iwata1995}, for which a combinatorial algorithm was developed in~\cite{Hirai2021}.

\paragraph{Matrix representations for weighted problems.}
Weighted combinatorial optimization problems are represented by matrices in form of $A = \sum_{i=1}^m \theta^{w_i} x_i A_i$, where $\theta$ is a new indeterminate and $w_i$ is the weight of the $i$th element.
In this representation, the minimum weight of a solution can be retrieved from the degree (in $\theta$) of the determinant of $A$.
Faster algorithms for the weighed problems have been developed based on this representation~\cite{Sankowski2009,Harvey2009,Iwata2022}.

This matrix representation $A$ is a special class of \emph{linear polynomial matrices} $\sum_{k=0}^{\modify{d}} \theta^{\modify{k}} B_k$, where $B_0, \dotsc, B_d$ are $n \times n$ linear matrices with symbols $x_1, \dotsc, x_m$.
As a weighted analogue of non-commutative Edmonds' problem, Hirai~\cite{Hirai2019} introduced \emph{weighted non-commutative Edmonds' problem}, which is to compute the degree of the \emph{Dieudonné determinant} of a linear polynomial matrix.
Here, the Dieudonné determinant is a non-commutative generalization of the usual determinant.
This weighted variant can be deterministically solved in time polynomial in $n$, $m$, and $d$~\cite{Hirai2019}.
This result was later simplified by~\cite{Oki2023}.
A special class of linear polynomial matrices in form of $\sum_{i=1}^m \theta^{w_i} x_i A_i$, which \modify{is called \emph{a linear symbolic monomial matrix} in~\cite{Hirai2024}} and is important as matrix representations of weighted combinatorial optimization problems, can be solved in \emph{strongly} polynomial time~\cite{Hirai2022}; that is, the running time does not depend on $w_i$.
\modify{This algorithm is based on cost-scaling and simultaneous Diophantine approximation.}

\modify{
Extending our results, a recent paper~\cite{Hirai2024} establishes a matrix representation for weighted fractional linear matroid parity using the Dieudonné determinant.
The paper also gives min-max theorems and a strongly polynomial-time algorithm for computing the maximum degree of the Dieudonné determinant of a submatrix in a linear symbolic monomial matrix, extending the Hungarian method and the algorithm of Gijswijt--Pap~\cite{Gijswijt2013}.
}

\subsection{Organization}
The rest of this paper is organized as follows.
\Cref{sec:preliminaries} introduces preliminaries on the fractional linear matroid parity problem and the nc-rank of linear matrices.
Then, \Cref{sec:ncrank-fractional} shows that the nc-rank of the matroid representation of linear matroid parity corresponds to fractional linear matroid parity.
Based on this correspondence, \Cref{sec:algebraic-algorithm} develops an algebraic algorithm for the \modify{fractional} linear matroid parity problem and then \Cref{sec:finding-dual} presents an algorithm for the minimum 2-cover problem.
Finally, \Cref{sec:faster-algorithm} develops a faster divide-and-conquer algorithm with the sparse representation.

\section{Preliminaries}\label{sec:preliminaries}
We give basic definitions and notations.
Let $\N$ be the set of natural numbers and $\R$ the set of reals.
For $n \in \N$, let $[n] \coloneqq \{1, 2, \dotsc, n\}$.
For two real vectors $y = (y_1, \dotsc, y_m)$ and $z = (z_1, \dotsc, z_m)$, $y \le z$ means that $y_i \le z_i$ for all $i \in [m]$.
The \emph{cardinality} of a nonnegative vector $y \in \R^m$ is $|y| \coloneqq \sum_{i=1}^m y_i$.
Let $\bfzero$ and $\ones$ denote the all-zero and all-one vectors, respectively, of appropriate dimensions.
Let $e_i$ be the $i$th standard unit vector, i.e., its $j$th component is $1$ if $i = j$ and $0$ otherwise.

Let $\K$ be a ground field.
We assume that arithmetic operations on $\K$ can be performed in constant time except in \Cref{sec:bit-complexity} where the bit complexity is discussed for $\K = \Q$.
We denote by $\GL_n(\K)$ the set of $n \times n$ nonsingular matrices over $\K$.
For a matrix $A$, a row subset $I$, and a column subset $J$, we denote by $A[I, J]$ the submatrix indexed by $I$ and $J$, and by $A[I]$ the principal submatrix $A[I, I]$ for square $A$.
When $I$ (resp.\ $J$) is all the rows (resp.\ columns), we denote $A[I, J]$ by $A[*, J]$ (resp.\ $A[I, *]$).
The $n \times n$ identity matrix and the $n \times m$ zero matrix are denoted as $I_n$ and $O_{n, m}$, respectively.
We will omit the subscript of a zero matrix when its size does not matter.

A square matrix $A \in \K^{n \times n}$ is said to be \emph{skew-symmetric} if $A^\top = -A$ and its diagonals are zero; the latter condition \modify{concerns} the case where the characteristic of $\K$ is $2$.
For two vectors $a, b \in \K^n$, we define the \emph{wedge product} as $a \wedge b \coloneqq ab^\top - ba^\top$.
This is a skew-symmetric matrix of rank-two if $a$ and $b$ are linearly independent.
Conversely, any rank-two skew-symmetric matrix is the wedge product of some $a$ and $b$.

For $V, W \subseteq \K^n$, we mean by $V \le W$ that $V$ and $W$ are vector subspaces of $\K^n$ such that $V \subseteq W$, i.e., $V$ is a subspace of $W$, and $V < W$ means $V \le W$ and $V \ne W$.
For vectors $a_1, \dotsc, a_m \in \K^n$, let $\agbr{a_1, \dotsc, a_m}$ denote the vector subspace spanned by $a_1, \dotsc, a_m$.

\subsection{Linear Matroid Parity}\label{sec:linear-matroid-parity}
Let $\ell_1, \dotsc, \ell_m \le \K^n$ be two-dimensional vector subspaces, called \emph{lines}.
A line subset $M \subseteq L \coloneqq \{\ell_1, \dotsc, \ell_m\}$ is called a \emph{matroid matching} if it spans a $2|M|$-dimensional vector subspace of $\K^n$.
A \emph{parity base} is a matroid matching $M$ with $2|M| = n$.
Without loss of generality, we assume $n \le 2m$ since we can focus on the at most $2m$-dimensional subspace spanned by the lines.
The \emph{linear matroid parity problem}~\cite{Lawler1976} (or the \emph{linear matroid matching problem}) is to find a matroid matching of maximum cardinality.
This classical problem commonly generalizes the matching and linear matroid intersection problems and admits a min-max formula and polynomial-time algorithms~\cite{Lovasz1980a,Lovasz1980b,Lovasz1981,Gabow1986,Orlin1990,Orlin2008,Cheung2014}.

For the linear matroid parity problem, Lovász~\cite{Lovasz1979} introduced the following matrix representation:
\begin{align}\label{eq:parity-matrix}
    A = \sum_{i=1}^m x_i (a_i \wedge b_i),
\end{align}
where $\{a_i, b_i\}$ is any basis of $\ell_i$ for $i \in [m]$ and $x_1, \dotsc, x_m$ are distinct indeterminates.
That is, $A$ is a linear matrix~\eqref{def:linear} with rank-two skew-symmetric coefficients.
Recall that $\rank A$ denotes the rank of $A$ as a matrix over $\K(x_1, \dotsc, x_m)$.

\begin{theorem}[{\cite{Lovasz1979,Lovasz1989}}]\label{thm:parity-main}
    Let $A$ be the matrix representation~\eqref{eq:parity-matrix} corresponding to lines $L$.
    Then, we have
    \begin{align}
        \rank A = 2 \max_{\substack{M \subseteq L:\\ \text{matroid matching}}} |M|.
    \end{align}
\end{theorem}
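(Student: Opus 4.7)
The plan is to reduce both inequalities to a factored form of $A$ together with a Pfaffian expansion. Let $C = [a_1 \mid b_1 \mid a_2 \mid b_2 \mid \dotsb \mid a_m \mid b_m] \in \K^{n \times 2m}$, and let $D$ be the $2m \times 2m$ block-diagonal skew-symmetric matrix over $\K(x_1,\dotsc,x_m)$ whose $i$th diagonal block is $\begin{pmatrix} 0 & x_i \\ -x_i & 0 \end{pmatrix}$. A direct expansion gives $C D C^\top = \sum_{i=1}^m x_i(a_i b_i^\top - b_i a_i^\top) = A$. Both bounds will read off from this factorization.

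For the lower bound $\rank A \ge 2\mu$, where $\mu$ is the maximum matroid matching size, I would take a matroid matching $M$ of size $\mu$ and specialize $x_i \gets 0$ for all $i \notin M$. Since specialization cannot raise the rank, it suffices to analyze the resulting matrix $C_M D_M C_M^\top$, where $C_M$ and $D_M$ collect the $2\mu$ columns/rows indexed by $M$. The matroid matching property gives $\rank C_M = 2\mu$, and $D_M$ remains nonsingular because $\det D_M = \prod_{i \in M} x_i^2 \neq 0$ over $\K(x_i)$. Therefore $\rank(C_M D_M C_M^\top) = 2\mu$ and $\rank A \ge 2\mu$.

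For the upper bound $\rank A \le 2\mu$, I would use that skew-symmetry forces $\rank A$ to be an even number $2r$ and that there exists $S' \subseteq [n]$ with $|S'| = 2r$ and $\pf A[S'] \ne 0$ as a polynomial in $x_1,\dotsc,x_m$. Setting $C' = C[S',*]$, so $A[S'] = C' D (C')^\top$, I would invoke the Cauchy--Binet expansion for Pfaffians
\[
    \pf A[S'] \;=\; \sum_{\substack{S \subseteq [2m] \\ |S| = 2r}} \pm \det(C'[*, S])\,\pf(D[S]).
\]
Because $D$ is block-diagonal with $2 \times 2$ blocks, $\pf(D[S])$ vanishes unless $S$ is a union of whole blocks, i.e., $S = S_M \coloneqq \bigcup_{i \in M} \{2i-1, 2i\}$ for some $M \subseteq [m]$ with $|M| = r$, in which case $\pf(D[S_M]) = \prod_{i \in M} x_i$. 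Since the $x_i$ are algebraically independent over $\K$, nonvanishing of $\pf A[S']$ forces $\det C'[*, S_M] \ne 0$ for at least one such $M$. This says the restricted vectors $\{a_i[S'], b_i[S']\}_{i \in M}$ are linearly independent in $\K^{2r}$; lifting, any linear dependence of $\{a_i, b_i\}_{i \in M}$ in $\K^n$ would restrict to one on $S'$, so these vectors are independent in $\K^n$ as well, making $M$ a matroid matching of size $r$. Hence $r \le \mu$ and $\rank A \le 2\mu$.

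The main technical ingredient is the Pfaffian Cauchy--Binet identity used in the upper bound; it is classical---a special case of the Ishikawa expansion formula cited later in the paper---and its sign conventions are somewhat delicate, but the argument only requires nonvanishing, so signs can be ignored. Everything else reduces to linear algebra and the definition of a matroid matching.
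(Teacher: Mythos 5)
Your proof is correct. The paper states \cref{thm:parity-main} as a cited result of Lov\'asz and gives no proof of its own, but your argument---factoring $A = CDC^\top$ and expanding $\pf A[S']$ via the Pfaffian Cauchy--Binet identity (\cref{prop:ishikawa-wakayama}) so that the surviving terms are indexed by matroid matchings---is exactly the technique the paper itself deploys for the fractional analogue in \cref{lem:pf-expand,lem:characterize-nonsingular-principal-submatrix}, so it is entirely in the spirit of the paper; both the lower-bound specialization step and the use of algebraic independence of the $x_i$ in the upper bound are sound.
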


Geelen and Iwata~\cite{Geelen2005} gave another matrix representation, called the \emph{sparse representation}.
Letting $\Delta = \begin{bNiceArray}{rr} 0 & +1 \\ -1 & 0 \end{bNiceArray}$ and $B_i = \begin{bmatrix} a_i & b_i \end{bmatrix}$ for $i \in [m]$, we define
\begin{align}\label{def:sparse}
    Z = \begin{bNiceArray}[margin]{c|ccc}
        O         & B_1        & \Cdots & B_m \\\midrule
        -B_1^\top & x_1 \Delta &        &     \\
        \Vdots    &            & \Ddots &     \\
        -B_m^\top &            &        & x_m \Delta
    \end{bNiceArray}.
\end{align}

\begin{theorem}[{Geelen and Iwata~\cite[Theorem~4.1]{Geelen2005}}]\label{thm:parity-main-sparse}
    Let $Z$ be the sparse representation~\eqref{def:sparse} corresponding to lines $L$ with $|L| = m$.
    Then, we have
    \begin{align}
        \rank Z = 2 \max_{\substack{M \subseteq L:\\ \text{matroid matching}}} |M| + 2m.
    \end{align}
\end{theorem}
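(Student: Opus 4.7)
The plan is to reduce \Cref{thm:parity-main-sparse} to \Cref{thm:parity-main} by a Schur-complement computation on the block form of $Z$. Write $Z$ as the $2\times 2$ block matrix
\[
Z = \begin{bmatrix} O & Q \\ -Q^\top & S \end{bmatrix},
\qquad
Q = \begin{bmatrix} B_1 & \cdots & B_m \end{bmatrix},
\qquad
S = \mathrm{diag}(x_1\Delta, \dotsc, x_m\Delta).
\]
Over the rational function field $\K(x_1,\dotsc,x_m)$, each block $x_i\Delta$ has determinant $x_i^2 \ne 0$, so $S$ is nonsingular and its inverse is the block diagonal $\mathrm{diag}(-x_1^{-1}\Delta,\dotsc,-x_m^{-1}\Delta)$, using the elementary identity $\Delta^{-1} = -\Delta$. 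The standard block-triangular factorization then gives $\rank Z = \rank S + \rank(Q S^{-1} Q^\top) = 2m + \rank(Q S^{-1} Q^\top)$, and it remains to identify the Schur complement with the \citet{Lovasz1979} representation.

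For the identification, first I would observe the matrix identity $B_i \Delta B_i^\top = a_i b_i^\top - b_i a_i^\top = a_i \wedge b_i$, which is a direct $2\times 2$ computation. Carrying out the block multiplication yields
\[
Q S^{-1} Q^\top \;=\; -\sum_{i=1}^m x_i^{-1}\, B_i \Delta B_i^\top \;=\; -\sum_{i=1}^m x_i^{-1}\,(a_i\wedge b_i).
\]
Because substituting $x_i \mapsto x_i^{-1}$ is an automorphism of $\K(x_1,\dotsc,x_m)$ (extended from the obvious endomorphism of the Laurent polynomial ring), this matrix has the same rank as $A = \sum_{i=1}^m x_i(a_i\wedge b_i)$, so $\rank(QS^{-1}Q^\top) = \rank A$. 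Combining with \Cref{thm:parity-main} gives $\rank Z = 2m + 2\max_M |M|$ as required.

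The computation itself is routine; the only points that need care are (a) checking that the block-triangular factorization is genuinely a rank-preserving identity over the function field (equivalently, that the left and right unipotent factors are invertible there), and (b) justifying that replacing $x_i$ by $x_i^{-1}$ does not alter the rank of the linear matrix $A$. Neither step presents a real obstacle: (a) follows because the unipotent factors have all $1$'s on the diagonal, and (b) follows from the fact that $x_i \mapsto x_i^{-1}$ extends to an automorphism of $\K(x_1,\dotsc,x_m)$, under which the rank of any matrix is preserved. The main conceptual point to highlight is that the seemingly larger sparse representation $Z$ collapses, via Schur complement, to exactly the \citet{Lovasz1979} representation (up to the harmless signs and rescaling by $x_i^{-1}$), which explains the additive $+2m$ term in the rank formula.
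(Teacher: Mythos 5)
Your proof is correct and follows essentially the same route as the paper: take the Schur complement of the nonsingular block-diagonal bottom-right block, identify it (up to sign) with $\sum_{i=1}^m x_i^{-1}(a_i \wedge b_i)$ via $a_i \wedge b_i = B_i \Delta B_i^\top$, note that $x_i \mapsto x_i^{-1}$ preserves rank, and conclude by the Guttman rank additivity formula together with \cref{thm:parity-main}. Your write-up is in fact somewhat more careful than the paper's terse sketch about the sign from $\Delta^{-1} = -\Delta$ and the justification of the $x_i \mapsto x_i^{-1}$ substitution.
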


\Cref{thm:parity-main-sparse} is obtained from \Cref{thm:parity-main} as follows.
Since $a_i \wedge b_i = B_i \Delta B_i^\top$, the Schur complement of the bottom-right block in $Z$ is $\sum_{i=1}^m x_i^{-1}(a_i \wedge b_i)$.
Thus, its rank is the same as $\rank A$.
By the Guttman rank additivity formula, we obtain $\rank Z = 2m + \rank A$.
See~\cite{Matoya2022}.

\subsection{Fractional Linear Matroid Parity}\label{sec:fractional-linear-matroid-parity}
The \emph{matroid parity polytope} is the convex hull of the incidence vectors of the matroid matchings.
In contrast to the matching and matroid intersection polytopes, a polyhedral description of matroid parity polytopes is still unknown.
As a relaxation of the matroid parity polytope, Vande Vate~\cite{VandeVate1992} introduced a \emph{fractional matroid parity} (\emph{matching}) \emph{polytope} as follows.
Let $L = \{\ell_1, \dotsc, \ell_m\}$ be lines.
A \emph{fractional matroid matching} is a nonnegative vector $y \in \R^m$ such that
\begin{align}\label{eq:fractional}
    \sum_{i=1}^m \dim (S \cap \ell_i) y_i \le \dim S
\end{align}
holds for all $S \le \K^n$.
The \emph{fractional matroid parity polytope} $P$ is the set of all fractional matroid matchings.
This polytope is half-integral, i.e., extreme fractional matroid matchings are half-integral, and the integral ones are the incidence vectors of the matroid matchings~\cite{VandeVate1992}.
We call $y \in P$ a \emph{fractional parity base} if $2|y| = n$.

The \emph{fractional linear matroid parity} (\emph{matching}) \emph{problem} is to find a fractional matroid matching of maximum cardinality.
Since fractional matroid parity polytopes are half-integral, there always exists a half-integral optimal solution.
Chang et al.~\cite{Chang2001a,Chang2001b} gave a min-max theorem and a polynomial-time algorithm for this problem.
To introduce the min-max theorem, we shall define a \emph{2-cover} as a pair $(S, T)$ of vector subspaces of $\K^n$ such that $\dim(S \cap \ell_i) + \dim(T \cap \ell_i) \geq 2$ for all $i \in [m]$.
A 2-cover $(S,T)$ is said to be \emph{nested} if $S \le T$.

\begin{theorem}[{\cite[Corollary~4.3]{Chang2001a}}]\label{thm:CLV}
    For a fractional matroid parity polytope $P$, it holds
    \begin{align}
        2\max_{y \in P} |y| = \min_{(S,T): \text{nested 2-cover}} (\dim S + \dim T).
    \end{align}
\end{theorem}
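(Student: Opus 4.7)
The plan is to establish this min–max identity by weak LP duality in one direction and a structural construction in the other, starting from an extreme optimal primal solution.

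\textbf{Easy direction.} First I would show $2\max_{y \in P}|y| \le \min_{(S,T)}(\dim S + \dim T)$ by direct weak duality. For any fractional matroid matching $y \in P$ and any nested 2-cover $(S,T)$, the 2-cover condition $\dim(S \cap \ell_i) + \dim(T \cap \ell_i) \ge 2$ combined with the polytope-defining constraint~\eqref{eq:fractional} applied separately to $S$ and to $T$ gives
\[
2|y| = \sum_{i=1}^m 2 y_i \le \sum_{i=1}^m \dim(S \cap \ell_i)\, y_i + \sum_{i=1}^m \dim(T \cap \ell_i)\, y_i \le \dim S + \dim T.
\]
Note that this direction does not actually use the nesting condition $S \le T$; it holds for arbitrary 2-covers.

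\textbf{Hard direction.} For the reverse inequality I would take an extreme optimal $y^* \in P$, which is half-integral by~\citep{VandeVate1992}, and exhibit a nested 2-cover $(S^\star, T^\star)$ attaining $\dim S^\star + \dim T^\star = 2|y^*|$. Partition the indices as $L_0 = \{i : y^*_i = 0\}$, $L_{1/2} = \{i : y^*_i = 1/2\}$, and $L_1 = \{i : y^*_i = 1\}$. The natural candidate is $T^\star = \agbr{\ell_i : i \in L_1 \cup L_{1/2}}$ and $S^\star = \agbr{\ell_i : i \in L_1}$. If the constraint~\eqref{eq:fractional} is tight at $y^*$ for both $S^\star$ and $T^\star$---which is forced by optimality and the LP complementary-slackness relation between $y^*$ and any optimal dual---then $\dim S^\star = 2|L_1|$ and $\dim T^\star = 2|L_1| + |L_{1/2}|$, whence $\dim S^\star + \dim T^\star = 4|L_1| + |L_{1/2}| = 2|y^*|$ exactly.

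\textbf{Main obstacle.} The crux is twofold. First, one must verify that $(S^\star, T^\star)$ satisfies the 2-cover condition on \emph{every} line, including those $\ell_j$ with $j \in L_0$ which carry no direct tightness information from $y^*$; otherwise one could strictly increase $y^*_j$ while staying in $P$, contradicting maximality. Second, one must confirm the dimensional identities $\dim S^\star = 2|L_1|$ and $\dim T^\star = 2|L_1| + |L_{1/2}|$, which require the lines in $L_{1/2}$ to be linearly independent modulo $S^\star$ in a precise sense; this depends on the characterization of extreme half-integral points in $P$ by \citet{Chang2001b}, which organizes the half-weight support into a chain of ``odd linear circuits'' around a nested family of tight subspaces. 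A purely LP-theoretic route---turning a general optimal dual (supported on arbitrarily many subspaces) into one supported on just two nested subspaces with weight $\frac12$ each---looks appealing, but the absence of extreme-point structure for the infinite-dimensional dual LP makes it effectively equivalent to the structural argument above. A fallback is induction on $m$ or $n$, deleting an index in $L_1$ together with its span or contracting a tight $\ell_i$ with $y^*_i = \frac12$, but the same combinatorial obstructions reappear and the extreme-point characterization of~\citep{Chang2001b} remains the key input.
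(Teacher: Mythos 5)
This theorem is quoted in the paper from \citet[Corollary~4.3]{Chang2001a} and is not proved there, so there is no internal proof to compare against; I can only assess your argument on its own terms. Your easy direction is correct and complete: for any $y \in P$ and any 2-cover $(S,T)$, nonnegativity of $y$, the cover inequality, and the constraint~\eqref{eq:fractional} applied to $S$ and to $T$ give $2|y| \le \dim S + \dim T$, and indeed nestedness is not needed there.

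The hard direction, however, contains a genuine error beyond the obstacles you flag. With $S^\star = \langle \ell_i : i \in L_1\rangle$ and $T^\star = \langle \ell_i : i \in L_1 \cup L_{1/2}\rangle$, your own dimension counts give $\dim S^\star + \dim T^\star = 4|L_1| + |L_{1/2}|$, whereas $2|y^*| = 2|L_1| + |L_{1/2}|$; these agree only when $L_1 = \emptyset$, so the claimed equality ``$4|L_1|+|L_{1/2}| = 2|y^*|$'' is false in general. The underlying issue is visible already for a single line with $y^*_1 = 1$: your candidate is $S^\star = T^\star = \ell_1$ with $\dim S^\star + \dim T^\star = 4$, while the optimum is $2$ (take $S = \{\bfzero\}$ and $T = \ell_1$, or $S = T$ a one-dimensional subspace of $\ell_1$). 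Tracing through where equality must hold in the weak-duality chain shows that a line with $y_i = 1$ must satisfy $\dim(S\cap\ell_i) + \dim(T\cap\ell_i) = 2$, i.e.\ it must be split as $(0,2)$ or $(1,1)$ between $S$ and $T$ --- it cannot lie entirely inside $S$. So the correct certificate cannot take $S$ to be the full span of $L_1$; it has to be built from the tight-subspace/canonical-family structure of \citet{Chang2001b} in a more delicate way (roughly, $S$ collects the ``doubly covering'' directions and $T$ the span of the odd-cycle components, in analogy with fractional vertex covers taking values in $\{0,\tfrac12,1\}$). Combined with the fact that you explicitly leave the 2-cover condition on $L_0$ and the dimension identities unverified, the reverse inequality is not established.
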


If $(S,T)$ and $(S',T')$ are minimum nested 2-covers, then $(S\cap S', T+T')$ and $(S + S', T \cap T')$ are also minimum 2-covers and the former is nested.
Hence, by the modularity of the dimension, there exists a unique minimum nested 2-cover $(S^*, T^*)$ such that $S^* \le S$ and $T \le T^*$ for any minimum nested 2-cover $(S, T)$~\cite[Lemma~4.9]{Chang2001b}.
This nested 2-cover $(S^*, T^*)$ is called the \emph{dominant 2-cover}.
The dominant 2-cover plays an important role in the weighted fractional matroid parity algorithm by Gijswijt and Pap~\cite{Gijswijt2013}.

Chang et al.~\cite{Chang2001b} characterized extreme fractional matroid matchings by means of \emph{canonical families}.
We introduce some definitions to describe it.
Let $\mathcal{S} = \{S_1, \dotsc, S_t\}$ be a chain of vector subspaces, i.e., $S_1 < S_2 < \dotsb < S_t \le \K^n$, and $L' \subseteq L$ be a line subset with $|L'| = t$.
Assuming $\ell \le S_t$ for all $\ell \in L'$, we define a graph $G(\mathcal{S}, L')$ in the following way.
The vertex and edge sets are identified with $\mathcal{S}$ and $L'$, respectively.
Every edge $\ell \in L'$ is incident to vertex $S_j$ if $\dim (S_j \cap \ell) - \dim (S_{j-1} \cap \ell) = 1$ and is a loop at $S_j$ if $\dim (S_j \cap \ell) - \dim (S_{j-1} \cap \ell) = 2$, where $S_0$ is regarded as $\{\bfzero\}$.
Let $y \in \zho^m$ be a half-integral vector.
We say that $S \le \K^n$ is \emph{tight} with respect to $y$ if the inequality~\eqref{eq:fractional} is satisfied with equality.
For $a \in \zho$, let $\supp_a(y) \coloneqq \{i \in [m]: y_i = a\}$ and $L_a(y) \coloneqq \{\ell_i: i \in \supp_a(y)\}$.
A chain $\mathcal{S} = \{S_1, \dotsc, S_t\}$ of length $t = \big|L_{\frac12}(y)\big|$ is called a \emph{canonical family} with respect to $y$ if the following are satisfied.
\begin{enumerate}
    \item $S_1, \dotsc, S_t$ are tight with respect to $y$.
    \item \modify{There exist $v_1, \dotsc, v_t \in \K^n$ such that $S_j = S_{j-1} + \agbr{v_j}$ for $j \in [t]$ with $S_0 = \sum_{\ell \in L_1(y)} \ell$.}
    \item The graph $G\bigl(\mathcal{S}, L_{\frac12}(y)\bigr)$ is the disjoint sum of node-disjoint odd cycles.
\end{enumerate}

\begin{theorem}[{\cite[Theorem~7.2]{Chang2001b}}]
    Let $P$ be the fractional matroid parity polytope defined by lines $L = \{\ell_1, \dotsc, \ell_m\}$.
    A half-integral vector $y \in \zho^m$ is an extreme point of $P$ if and only if $L_1(y)$ is a matroid matching and there is a canonical family with respect to $y$.
\end{theorem}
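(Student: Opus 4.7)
The plan is to invoke the standard LP characterization of extreme points: $y \in P$ is extreme iff some $m$ of the constraints active at $y$ have linearly independent coefficient vectors in $\R^m$. The active bounds $y_i = 0$ for $i \notin \supp(y)$ account for $m - |L_1(y)| - t$ such constraints, so the task reduces to producing $|L_1(y)| + t$ tight subspace constraints of the form~\eqref{eq:fractional} whose restrictions to $\supp(y)$ are linearly independent over $\Q$. A key structural tool is modularity of the tight subspace lattice: if $S$ and $T$ are tight for $y$, then so are $S + T$ and $S \cap T$, which one obtains by combining feasibility at $S + T$ and $S \cap T$ with the submodular inequality $\dim((S+T) \cap \ell) + \dim((S \cap T) \cap \ell) \ge \dim(S \cap \ell) + \dim(T \cap \ell)$ for each line $\ell$.

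For the ``only if'' direction, I first observe that $S_0 := \sum_{\ell \in L_1(y)} \ell$ is tight from feasibility alone: evaluating~\eqref{eq:fractional} at $S = S_0$ yields $2 |L_1(y)| + \frac12 \sum_{\ell \in L_{\frac12}(y)} \dim(S_0 \cap \ell) \le \dim S_0 \le 2|L_1(y)|$, forcing $\dim S_0 = 2|L_1(y)|$ (so $L_1(y)$ is a matroid matching) and $\dim(S_0 \cap \ell) = 0$ for every $\ell \in L_{\frac12}(y)$. I then build the chain $S_0 < S_1 < \dotsb < S_t$ inductively, choosing at each step $\ell_{i_j} \in L_{\frac12}(y) \setminus \{\ell_{i_1}, \dotsc, \ell_{i_{j-1}}\}$ so that $S_j := S_{j-1} + \ell_{i_j}$ is tight; existence of a valid choice follows from extremality via a perturbation argument, since otherwise the unselected half-coordinates would admit a one-parameter family of feasible perturbations of $y$. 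The matrix $H \in \{0,1,2\}^{t \times t}$ with $H_{j,k} = \dim(S_j \cap \ell_{i_k}) - \dim(S_{j-1} \cap \ell_{i_k})$ (taking $S_0 = \{\bfzero\}$ here, per the convention in the definition of $G(\mathcal{S}, L_{\frac12}(y))$) is exactly the unsigned vertex--edge incidence matrix of $G(\mathcal{S}, L_{\frac12}(y))$, loops contributing $2$. Extremality forces $H$ to be nonsingular, and combined with the balance $|\mathcal{S}| = t = |L_{\frac12}(y)|$ and the classical fact that the kernel of the unsigned incidence matrix over $\Q$ encodes proper $2$-colorings, every component of the graph is non-bipartite and unicyclic; a further pruning argument based on the chain's rigidity then rules out pendant branches and produces pure disjoint odd cycles.

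For the ``if'' direction, I reverse the matrix argument. Given the canonical family and that $L_1(y)$ is a matroid matching, I supply $|L_1(y)|$ more tight constraints from spans of nested subsets of $L_1(y)$ of increasing cardinality (each such span is tight because the half-lines lie outside $S_0$); the active constraint matrix restricted to $\supp(y)$ is then block upper triangular with a lower-triangular $L_1(y)$ block and $H$ as the other diagonal block, both nonsingular, certifying extremality. Feasibility $y \in P$, which is not explicitly assumed in the hypothesis, can be verified by exhibiting a dual nested $2$-cover matching $2|y|$ and invoking \cref{thm:CLV}. The principal obstacle is bridging from ``nonsingular $H$'' to ``disjoint sum of odd cycles'': unicyclic components with pendant tree branches also yield nonsingular incidence matrices, so closing this gap requires a careful pruning step exploiting the rigidity of the chain (each $S_j$ adjoins exactly one $\ell_{i_j}$) together with the modularity of the tight-subspace lattice. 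This refinement is the most delicate part of the argument.
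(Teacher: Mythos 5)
First, a point of reference: the paper does \emph{not} prove this statement. It is imported verbatim as Theorem~7.2 of \citet{Chang2001b} and used as a black box (notably in the proof of \cref{lem:extreme-converse}), so there is no internal proof to compare your attempt against; I can only judge the outline on its own terms. The skeleton you chose — the active-constraint criterion for extreme points plus the unsigned vertex--edge incidence matrix, mirroring the classical description of extreme fractional matchings of graphs — is the right kind of argument, and your opening observation is correct and nicely economical: evaluating~\eqref{eq:fractional} at $S_0=\sum_{\ell\in L_1(y)}\ell$ forces $\dim S_0=2|L_1(y)|$ (so $L_1(y)$ is a matroid matching), tightness of $S_0$, and $\dim(S_0\cap\ell)=0$ for every $\ell\in L_{\frac12}(y)$, all from feasibility alone.

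Nevertheless the proposal has genuine gaps. (i) The existence of the chain $S_0<S_1<\dotsb<S_t$ with every $S_j=S_{j-1}+\ell_{i_j}$ tight is the combinatorial heart of the ``only if'' direction, and you dispose of it with a one-line appeal to ``a perturbation argument''; producing a feasible two-sided perturbation of $y$ supported on the unselected half-coordinates requires controlling \emph{all} tight subspaces, not just the ones already in the chain, and this is precisely where the modularity of the tight-subspace lattice must be deployed in detail rather than merely stated. (ii) You correctly identify, and then do not close, the passage from ``$H$ nonsingular'' to ``node-disjoint odd cycles'': nonsingularity of the unsigned incidence matrix of a graph with equally many vertices and edges only yields that every component is non-bipartite and unicyclic, and excluding pendant trees needs a separate tightness argument (the graph analogue is that the far endpoint of a pendant branch cannot be tight). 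Since you flag this yourself as ``the most delicate part'' and leave it open, the ``only if'' direction is incomplete as written. (iii) In the ``if'' direction, certifying $y\in P$ by ``exhibiting a dual nested 2-cover matching $2|y|$ and invoking \cref{thm:CLV}'' is backwards: a nested 2-cover of value $2|y|$ only shows $\max_{z\in P}|z|\le|y|$ and says nothing about feasibility of $y$ itself, which must instead be verified against~\eqref{eq:fractional} directly (or derived from the canonical family). Finally, when invoking the finite active-constraint criterion for a polytope cut out by infinitely many inequalities, you should justify why inactive constraints do not obstruct the perturbation; here the slacks are half-integral and hence bounded away from zero, but that needs saying.
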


\subsection{Non-commutative Rank}
Let $A = \sum_{i=1}^m x_i A_i$ be a linear matrix~\eqref{def:linear} with $A_1, \dotsc, A_m \in \K^{n \times n}$.
As described in \Cref{sec:introduction}, the non-commutative rank (nc-rank) of $A$, denoted as $\ncrank A$, is equal to the following.
See~\cite{Cohn1985,Cohn1995,Fortin2004,Derksen2017} for details.
\begin{itemize}
    \item The \emph{inner rank} as a matrix over the free ring $R \coloneqq \K\agbr{x_1, \dotsc, x_m}$ generated by pairwise non-commutative indeterminates $x_1, \dotsc, x_m$.
          That is, the minimum $r$ such that $A$ is decomposed as $A = BC^\top$ with $B, C \in R^{n \times r}$.
    \item The rank as a matrix over the \emph{free skew field} $\F \coloneqq \K\fsbr{x_1, \dotsc, x_m}$ generated by pairwise non-commutative indeterminates $x_1, \dotsc, x_m$.
          That is, the maximum size of a nonsingular (invertible) submatrix of $A$ over $\F$.
          The free skew field is a quotient of $\K\agbr{x_1, \dotsc, x_m}$ defined by Amitsur~\cite{Amitsur1966}.
    \item $\frac{1}{d} \rank \blowup{A}{d}$ for $d \ge n-1$, where $\blowup{A}{d}$ is the $d$th-order blow-up~\eqref{def:blow-up} of $A$.
\end{itemize}

In general, the rank and nc-rank of a linear matrix $A$ satisfy $\rank A \le \ncrank A \le 2 \rank A$~\cite[Corollary~2]{Fortin2004}.
Generalizing the König--Egeváry theorem for bipartite matching and Edmonds' matroid intersection theorem, Fortin and Reutenauer~\cite{Fortin2004} presented the following min-max type formulation for nc-rank.

\begin{theorem}[{\cite[Theorem~1]{Fortin2004}}]\label{thm:FR}
    For an $n \times n$ linear matrix $A$, it holds that
    \begin{align}\label{eq:FR}
    \ncrank A = \min\left\{2n-s-t : P, Q \in \GL_n(\K),  PAQ = \begin{bmatrix}
        * & * \\
        O_{s,t} & *
    \end{bmatrix}\right\}.
    \end{align}
\end{theorem}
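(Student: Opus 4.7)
The plan is to prove the two inequalities separately, each leveraging basic properties of $\ncrank$.

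\textbf{Upper bound: $\ncrank A \le 2n - s - t$.} Given $P, Q \in \GL_n(\K)$ producing an $s \times t$ zero block in the bottom-left of $PAQ$, I rely on two facts. First, $\ncrank$ is invariant under constant invertible multiplication: from $\blowup{(PAQ)}{d} = (I_d \otimes P)\, \blowup{A}{d}\, (I_d \otimes Q)$ one obtains $\rank \blowup{(PAQ)}{d} = \rank \blowup{A}{d}$, hence $\ncrank PAQ = \ncrank A$. Second, $\ncrank$ is subadditive under matrix addition (by concatenating inner-rank factorizations). Decompose $PAQ = M_1 + M_2$, where $M_1$ retains only the top $n - s$ rows of $PAQ$ and $M_2$ retains the bottom $s$ rows. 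By the zero-block hypothesis, $M_2$ has only $n - t$ nonzero columns, so $\ncrank M_1 \le n - s$ and $\ncrank M_2 \le n - t$ (each bounded by the number of nonzero rows or columns of the respective submatrix). Summing yields $\ncrank A \le (n - s) + (n - t) = 2n - s - t$.

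\textbf{Lower bound: existence of attaining witnesses $P, Q$.} Let $r \coloneqq \ncrank A$. I must produce $P, Q \in \GL_n(\K)$ and nonnegative integers $s, t$ with $s + t = 2n - r$ realizing a zero block of size $s \times t$. The starting point is the characterization of $\ncrank A$ as the rank of $A$ over the free skew field $\F = \K\fsbr{x_1, \dotsc, x_m}$. Since $\rank_\F A = r$, there are $P', Q' \in \GL_n(\F)$ such that the last $n - r$ rows of $P' A Q'$ vanish, providing an $(n - r) \times n$ zero block with $s + t = 2n - r$, but over $\F$ rather than over $\K$.

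\textbf{Main obstacle.} The crux is transferring these $\F$-valued witnesses to $\K$-valued ones, a step that essentially exploits $A$ being a \emph{linear} matrix. I would argue by induction on $n$, invoking a specialization of Cohn's factorization theorem for linear matrices: if $\ncrank A < n$, then already over $\K$ there exist $P, Q \in \GL_n(\K)$ making $PAQ$ hollow with a zero block of size $s_0 \times t_0$ satisfying $s_0 + t_0 \ge n + 1$. The resulting block decomposition $PAQ = \begin{pmatrix} A' & * \\ O & A'' \end{pmatrix}$ exhibits linear submatrices $A', A''$ of strictly smaller size, on which one recurses (extending to rectangular blocks by zero padding if needed), eventually constructing the required $P, Q \in \GL_n(\K)$ attaining $s + t = 2n - r$. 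The linearity of $A$ is essential: for a general matrix over $\K\agbr{x_1, \dotsc, x_m}$, inner-rank factorizations genuinely require non-commutative coefficients, and the $\F$-to-$\K$ reduction would fail.
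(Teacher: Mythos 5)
First, a point of reference: the paper does not prove this statement. \Cref{thm:FR} is imported verbatim from \citep[Theorem~1]{Fortin2004}, so there is no in-paper proof to compare against; I can only assess your attempt on its own terms. Your upper-bound direction is correct and complete: invariance of the nc-rank under constant invertible $P,Q$ (via the blow-up or the inner rank), subadditivity of the inner rank under addition, and the bound of the inner rank by the number of nonzero rows (resp.\ columns) together give $\ncrank A \le 2n-s-t$ for every admissible $(P,Q,s,t)$. This is the standard easy direction.

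The lower bound, however, has a genuine gap. You invoke as a black box that a non-full linear matrix can be made hollow by \emph{constant} transformations with $s_0+t_0\ge n+1$; this is precisely the content of the theorem in the borderline case and is the part that requires Cohn's factorization theory, so deferring it wholesale leaves the hard step unestablished. More seriously, the proposed induction does not close as written: after the decomposition $PAQ=\bigl(\begin{smallmatrix} A' & * \\ O & A'' \end{smallmatrix}\bigr)$, the zero blocks obtained recursively inside $A'$ and inside $A''$ occupy disjoint row sets and disjoint column sets, and they need not merge into a single $s\times t$ zero block with $s+t=2n-\ncrank A$ (the two natural merged blocks have parameters $s_0+s_1+t_1$ and $t_0+s_3+t_3$, neither of which is guaranteed to reach $s_1+t_1+s_3+t_3$); you would also need $\ncrank A\ge\ncrank A'+\ncrank A''$ and careful bookkeeping with rectangular blocks, none of which is supplied. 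The standard argument sidesteps the induction: take an inner-rank factorization $A=BC$ with inner dimension $r=\ncrank A$, use Cohn's linearization to make $B$ and $C$ linear, note that the degree-two terms force $B_iC_j=0$ for all $i,j\ge 1$ because the monomials $x_ix_j$ are linearly independent in $\K\agbr{x_1,\dotsc,x_m}$, and read off from the degree-one identity $A_i=B_0C_i+B_iC_0$ a pair of subspaces $X,Y$ with $x^\top A_i y=0$ and $\dim X+\dim Y\ge 2n-r$, which yields the required zero block directly (this is the viewpoint of \cref{thm:MVSP}). I recommend either citing the result, as the paper does, or reworking the lower bound along these lines.
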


Hamada and Hirai~\cite{Hamada2021} rephrased \Cref{thm:FR} as follows.

\begin{theorem}[{\cite{Hamada2021}}]\label{thm:MVSP}
    For an $n \times n$ linear matrix $A$, it holds that
    \begin{align}\label{eq:MVSP}
        \ncrank A = \min\left\{
            2n-\dim X-\dim Y
            : \text{$X, Y \le \K^n$, $A_i(X, Y) = \{0\}$ for $i \in [m]$}
        \right\},
    \end{align}
    where $A_i(X, Y) \coloneqq \{x^\top A_i y: x \in X, y \in Y\}$.
\end{theorem}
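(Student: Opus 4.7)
The plan is to derive \cref{thm:MVSP} directly from \cref{thm:FR} by matching the admissible data of the two minimisations: pairs $(P, Q) \in \GL_n(\K)^2$ bringing $A$ to block form with an $s \times t$ zero corner correspond bijectively to pairs $(X, Y)$ of subspaces of $\K^n$ with $\dim X = s$, $\dim Y = t$ and $A_i(X, Y) = \{0\}$ for every $i \in [m]$. Once this correspondence is established, taking minima on both sides gives equality of the two formulas, and \cref{thm:MVSP} is immediate.

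First I would show FR $\Rightarrow$ MVSP. Given $P, Q$ with $PAQ = \begin{bmatrix} * & * \\ O_{s,t} & * \end{bmatrix}$, let $p_1^\top, \dotsc, p_s^\top$ denote the last $s$ rows of $P$ and $q_1, \dotsc, q_t$ the first $t$ columns of $Q$. The zero-block condition means $p_a^\top A q_b = 0$ in $\K(x_1, \dotsc, x_m)$ for all $a \in [s]$, $b \in [t]$. Expanding $A = \sum_i x_i A_i$ and comparing coefficients of the distinct indeterminates yields $p_a^\top A_i q_b = 0$ for every $i, a, b$. Setting $X = \agbr{p_1, \dotsc, p_s}$ and $Y = \agbr{q_1, \dotsc, q_t}$, we obtain $\dim X = s$ and $\dim Y = t$ (by invertibility of $P$ and $Q$) together with $A_i(X, Y) = \{0\}$ for all $i$, as required.

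For the converse direction MVSP $\Rightarrow$ FR, start from subspaces $X, Y \le \K^n$ with $\dim X = s$, $\dim Y = t$, and $A_i(X, Y) = \{0\}$. Choose a basis $p_1, \dotsc, p_s$ of $X$, extend to a basis $p_1, \dotsc, p_n$ of $\K^n$, and let $P$ be the matrix whose rows are $p_{s+1}^\top, \dotsc, p_n^\top, p_1^\top, \dotsc, p_s^\top$ (so that the basis of $X$ occupies the last $s$ rows). Similarly take $Q$ to have a basis of $Y$ as its first $t$ columns. Then $P, Q \in \GL_n(\K)$, and the $(a,b)$-entry of the bottom-left $s \times t$ block of $PAQ$ is $p_a^\top A q_b = \sum_i x_i \, p_a^\top A_i q_b = 0$, giving the triangular form feasible for \eqref{eq:FR} with the same deficit $2n - s - t$.

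The argument is essentially bookkeeping, so there is no real obstacle; the only point worth stressing is that the condition $p_a^\top A q_b = 0$ as an element of $\K(x_1, \dotsc, x_m)$ forces $p_a^\top A_i q_b = 0$ for each individual $A_i$, because the $x_i$ are independent indeterminates. Once this is in place, the two minimisation problems have identical feasible sets (under the bijection above) and identical objectives, so their optima agree and \cref{thm:MVSP} follows from \cref{thm:FR}.
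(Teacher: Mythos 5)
Your argument is correct: the paper states \cref{thm:MVSP} without proof, citing it as a rephrasing of \cref{thm:FR}, and your derivation is exactly the bookkeeping that justifies that rephrasing --- identifying the last $s$ rows of $P$ and first $t$ columns of $Q$ with bases of $X$ and $Y$, and using the independence of the indeterminates to pass between $p_a^\top A q_b = 0$ and $p_a^\top A_i q_b = 0$ for each $i$. Both directions are handled properly, so the feasible sets and objectives match and the two minima coincide.
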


The dual problem~\eqref{eq:MVSP} is called the \emph{minimum vanishing subspace problem} (MVSP).
It is known that the MVSP is submodular function minimization on the product of the lattice of all vector subspaces of $\K^n$ and its order-reversed lattice~\modify{\cite{Hamada2021,Ivanyos2022}}.
Namely, if $(X, Y)$ and $(X', Y')$ attain the minimum, so do $(X + X', Y \cap Y')$ and $(X \cap X', Y + Y')$.

\subsection{Linear Algebra Toolbox}
We collect useful tools in linear algebra.
First, we deal with the Kronecker product.
Recall that the Kronecker product of an $n \times m$ matrix $A = (a_{ij})$ and a $p \times q$ matrix $B$ is an $np \times mq$ matrix
\begin{align}
    A \otimes B \coloneqq \begin{bNiceMatrix}
        a_{11} B & \Cdots & a_{1m} B \\
        \Vdots & \Ddots & \Vdots \\
        a_{n1} B & \Cdots & a_{nm} B
    \end{bNiceMatrix}.
\end{align}
For matrices $A, B, C$, and $D$ of such size that $AB$ and $CD$ are defined, the Kronecker product satisfies the \emph{mixed-product property}
\begin{align}\label{eq:mixed-product}
    (AB) \otimes (CD) = (A \otimes C)(B \otimes D).
\end{align}
Applying~\eqref{eq:mixed-product} twice, for matrices $A, B, C, D, E, F$ of appropriate size, we obtain
\begin{align}\label{eq:mixed-product-three}
    (ABC) \otimes (DEF) = (A \otimes D)((BC) \otimes (EF)) = (A \otimes D)(B \otimes E)(C \otimes F).
\end{align}

Next, let $A = (a_{ij})$ be an $n \times n$ skew-symmetric matrix with $n$ being even.
The \emph{Pfaffian} of $A$ is
\begin{align}
    \pf A \coloneqq \sum_{\sigma \in F_n} \sgn \sigma \prod_{i \in [n]: even} a_{\sigma(i-1)\sigma(i)},
\end{align}
where $F_n$ is the set of permutations $\sigma: [n] \to [n]$ such that $\sigma(1) < \sigma(3) < \dotsb < \sigma(\modify{n-1})$ and $\sigma(i-1) < \sigma(i)$ for even $i \in [n]$.
For convenience, let $\pf A = 0$ when $n$ is odd.
The Pfaffian satisfies ${(\pf A)}^2 = \det A$, meaning that $A$ is nonsingular if and only if $\pf A \ne 0$.

We give two expansion formulas of Pfaffian.
The first one is easily obtained from the definition of Pfaffian.

\begin{proposition}[{see~\cite[Lemma~7.3.20]{Murota2000}}]\label{prop:pf-expansion}
    For a skew-symmetric matrix $A = Q + T$, we have
    \begin{align}
        \pf A = \sum_{S} \pm \pf Q[S] \pf T[\overline{S}],
    \end{align}
    where $S$ runs over all row (column) subsets of $A$ and $\overline{S}$ is the complement of $S$.
\end{proposition}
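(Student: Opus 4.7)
The plan is to unfold the Pfaffian as a signed sum over perfect matchings, expand the multilinear product using $a_{ij}=q_{ij}+t_{ij}$ edge by edge, and then regroup the resulting monomials according to which vertices get matched by the edges contributed by $Q$.

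First I would recast the definition by noting that each $\sigma \in F_n$ is in bijection with a perfect matching $M(\sigma)=\{\{\sigma(2k-1),\sigma(2k)\}:k=1,\dotsc,n/2\}$ on $[n]$, and that $\sgn\sigma$ depends only on $M(\sigma)$; write the common sign as $\sgn(M)$. Then
\begin{align}
    \pf A = \sum_{M} \sgn(M) \prod_{\{i,j\}\in M} a_{ij},
\end{align}
the sum running over all perfect matchings $M$ of $[n]$. Expanding each factor $a_{ij}=q_{ij}+t_{ij}$ distributes the product over all ways of labelling each edge $e\in M$ as ``from $Q$'' or ``from $T$''. A labelling is determined by the sub-matching $M_Q\subseteq M$ of $Q$-edges, whose vertex set $S\subseteq[n]$ necessarily has even cardinality; $M_T\coloneqq M\setminus M_Q$ is then a perfect matching on $\overline{S}$. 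Summing first over $S$, then over $(M_Q,M_T)$ gives
\begin{align}
    \pf A = \sum_{S\subseteq[n]}\ \sum_{M_Q\text{ on }S}\ \sum_{M_T\text{ on }\overline{S}} \sgn(M_Q\cup M_T) \prod_{e\in M_Q} q_e \prod_{e\in M_T} t_e,
\end{align}
where odd-sized $S$ contribute an empty inner sum, consistent with $\pf Q[S]=0$.

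The main work is the sign identity $\sgn(M_Q\cup M_T)=\epsilon(S)\,\sgn_S(M_Q)\,\sgn_{\overline{S}}(M_T)$, where $\sgn_S$ denotes the Pfaffian sign on $S$ after relabelling its elements in increasing order as $1,\dotsc,|S|$, and $\epsilon(S)\in\{\pm1\}$ is the sign of the permutation that lists $S$ in increasing order followed by $\overline{S}$ in increasing order. This is proved by factoring the enumeration that exhibits $M_Q\cup M_T$ as an element of $F_n$ into (i) enumerations of $M_Q$ on $S$ and $M_T$ on $\overline{S}$ in the form required for $\sgn_S$ and $\sgn_{\overline{S}}$, and (ii) the interleaving permutation that merges the increasing listings of $S$ and $\overline{S}$ into $1,\dotsc,n$, whose sign is precisely $\epsilon(S)$. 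Substituting this identity back and recognising the expansions
\begin{align}
    \pf Q[S] = \sum_{M_Q\text{ on }S} \sgn_S(M_Q)\prod_{e\in M_Q} q_e, \qquad \pf T[\overline{S}] = \sum_{M_T\text{ on }\overline{S}} \sgn_{\overline{S}}(M_T)\prod_{e\in M_T} t_e,
\end{align}
yields $\pf A = \sum_S \epsilon(S)\,\pf Q[S]\,\pf T[\overline{S}]$, which is the claimed formula with $\pm=\epsilon(S)$.

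The only non-routine step is the sign identity in the middle paragraph; the rest is bookkeeping of the matching sum. Since $\epsilon(S)$ depends only on $S$ (not on $M_Q$, $M_T$, $Q$, or $T$) and since the statement only asserts the existence of the signs $\pm$, no further work is needed to make the formula match the statement.
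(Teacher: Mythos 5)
Your proof is correct and is precisely the argument the paper has in mind: the paper gives no proof of this proposition, stating only that it "is easily obtained from the definition of Pfaffian" and citing Murota's Lemma~7.3.20, and your matching-expansion with the shuffle-sign factorization is the standard way to carry that out. The only point worth flagging is that the statement implicitly requires $Q$ and $T$ to each be skew-symmetric (so that $\pf Q[S]$ and $\pf T[\overline{S}]$ are defined), which your expansion of $a_{ij}=q_{ij}+t_{ij}$ tacitly uses and which holds in every application in the paper.
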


The second one is a generalization of the Cauchy--Binet formula.

\begin{proposition}[{\cite{Ishikawa1995}}]\label{prop:ishikawa-wakayama}
    For skew-symmetric $A \in K^{n \times n}$ and $B \in K^{m \times n}$, it holds
    \begin{align}
        \pf BAB^\top = \sum_{J} \det B[*, J] \pf A[J],
    \end{align}
    where $J$ runs over all row (column) subsets of $A$ of size $m$.
\end{proposition}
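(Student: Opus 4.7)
The plan is to view both sides of the claimed identity as functions of $B \in \K^{m \times n}$ with $A$ fixed, and show they agree as multilinear alternating functions of the rows of $B$. Writing $b_i^\top$ for the $i$th row of $B$, we have $(BAB^\top)_{ij} = b_i^\top A b_j$, so expanding by the definition of the Pfaffian gives
\begin{align*}
    \pf BAB^\top = \sum_{\sigma \in F_m} \sgn \sigma \prod_{k=1}^{m/2} b_{\sigma(2k-1)}^\top A \, b_{\sigma(2k)}.
\end{align*}
Each row $b_i$ appears linearly in exactly one factor of each product, so the left-hand side is multilinear in the rows of $B$. Alternation follows from the identity $\pf(PMP^\top) = \det(P) \pf M$ for permutation matrices $P$: swapping two rows of $B$ conjugates $BAB^\top$ by a transposition, which negates the Pfaffian.

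Next, every multilinear alternating function of the rows of an $m \times n$ matrix is a unique $\K$-linear combination of its maximal minors $\det B[*, J]$ with $J \in \binom{[n]}{m}$. The right-hand side of the formula is manifestly of this form with coefficients $\pf A[J]$, so it suffices to verify the match of coefficients. To isolate the coefficient of $\det B[*, J]$ on either side, I would substitute $B = E_J$, the matrix whose $k$th row is the standard basis vector $e_{j_k}$ with $J = \{j_1 < \dots < j_m\}$. Then $BAB^\top = A[J]$ as principal submatrices, giving $\pf A[J]$ on the left-hand side; on the right, $\det B[*, J'] = \delta_{J, J'}$, so only the $J' = J$ term survives and yields $\pf A[J]$ as well.

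The case of odd $m$ is handled by convention: both sides vanish since every $\pf A[J]$ with $|J| = m$ odd is zero. I expect the main point requiring care to be the alternation property, but this reduces cleanly to $\pf(PMP^\top) = \det(P) \pf M$, which itself is a one-line consequence of the definition of the Pfaffian as a signed sum over $F_m$.
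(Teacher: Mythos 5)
The paper does not actually prove this proposition---it is quoted from Ishikawa--Wakayama \citep{Ishikawa1995} and used as a black box---so there is no in-paper argument to compare against. Your proof is a valid self-contained derivation and its structure (multilinearity and alternation in the rows of $B$, expansion in the basis of maximal minors, coefficient extraction via $B = E_J$) is the standard way to prove such Cauchy--Binet-type identities; all the individual steps check out, including $E_J A E_J^\top = A[J]$ and $\det E_J[*,J'] = \delta_{J,J'}$. The one point you should tighten is the alternation step: the identity $\pf(PMP^\top) = \det(P)\pf M$ only gives \emph{antisymmetry} under row swaps, whereas the expansion in maximal minors requires the stronger \emph{alternating} property (vanishing when two rows coincide), and the two are not equivalent in characteristic $2$---a case this paper explicitly accommodates via its zero-diagonal convention for skew-symmetric matrices. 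The fix is immediate: if $b_i = b_j$ then $BAB^\top$ has two equal rows and columns, hence is singular, and $(\pf M)^2 = \det M$ forces $\pf(BAB^\top) = 0$; alternatively, prove the identity with the entries of $A$ and $B$ as indeterminates over $\Z$, where antisymmetry does imply alternation, and specialize. With that one-line patch the argument is complete, including the degenerate cases of odd $m$ (both sides vanish by the Pfaffian convention) and $m > n$ (the right-hand side is an empty sum and $\Lambda^m \K^n = 0$ kills the left).
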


We next describe results on block matrices.

\begin{lemma}[see {\cite[Section 4.3]{Harvey2009}}]
    Let $T$ be a nonsingular matrix and
    \begin{align}
        Z = \begin{bmatrix}
            O & Q_1 \\
            Q_2 & T
        \end{bmatrix}.
    \end{align}
    Let $M = -Q_1 T^{-1}Q_2$ be the Schur complement of $T$.
    If $M$ is nonsingular, $Z$ is also nonsingular and
    \begin{align}\label{eq:Schur-inverse}
        Z^{-1} = \begin{bmatrix}
            M^{-1} & -M^{-1}Q_1T^{-1} \\
            -T^{-1}Q_2 M & T^{-1} + T^{-1}Q_2M^{-1}Q_1T^{-1}
        \end{bmatrix}.
    \end{align}
\end{lemma}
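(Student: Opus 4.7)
The plan is to verify the formula by exhibiting a block-triangular factorization of $Z$ and then inverting each factor. Since $M = -Q_1 T^{-1}Q_2$, a direct block multiplication shows
\[
    Z = \begin{bmatrix} I & Q_1 T^{-1} \\ O & I \end{bmatrix}\begin{bmatrix} M & O \\ Q_2 & T \end{bmatrix},
\]
because the $(1,1)$-block on the right expands to $M + Q_1 T^{-1} Q_2 = O$, while the other three blocks match those of $Z$ immediately. Under the assumptions that $T$ and $M$ are nonsingular, each factor on the right is block-triangular with invertible diagonal blocks and is therefore invertible; hence $Z$ itself is invertible.

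Next I would invert each factor in closed form. The upper factor inverts by negating its off-diagonal block, and the lower factor admits the standard block lower-triangular inverse
\[
    \begin{bmatrix} M & O \\ Q_2 & T \end{bmatrix}^{-1} = \begin{bmatrix} M^{-1} & O \\ -T^{-1}Q_2 M^{-1} & T^{-1} \end{bmatrix},
\]
which is checked in one line by multiplication. Then $Z^{-1}$ is the product of these two inverses in reverse order, and carrying out the resulting $2\times 2$ block multiplication produces the four blocks $M^{-1}$, $-M^{-1}Q_1 T^{-1}$, $-T^{-1}Q_2 M^{-1}$, and $T^{-1} + T^{-1}Q_2 M^{-1} Q_1 T^{-1}$, which together give the claimed formula.

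I do not foresee any real obstacle: once the factorization is written down, the remainder is routine block bookkeeping, and one could equivalently skip the factorization and verify $Z Z^{-1} = I$ directly, each of the four resulting block products reducing to a one-step cancellation via $Q_1 T^{-1} Q_2 = -M$. One minor point worth flagging is that the $(2,1)$-block of $Z^{-1}$ comes out to $-T^{-1}Q_2 M^{-1}$, so the printed $-T^{-1}Q_2 M$ in the statement appears to be a typographical omission of the inverse.
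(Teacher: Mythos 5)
Your proof is correct. The paper gives no proof of this lemma (it simply cites Harvey's work), and your block-triangular factorization followed by inversion of each factor is the standard argument; one verifies directly that the product of your two inverses reproduces the four stated blocks. You are also right that the $(2,1)$-block should read $-T^{-1}Q_2 M^{-1}$ rather than $-T^{-1}Q_2 M$ as printed --- with $M^{-1}$ there, all four blocks of $ZZ^{-1}$ collapse to $I$ or $O$ using $Q_1T^{-1}Q_2=-M$, whereas the printed version does not, so this is indeed a typographical error in the statement.
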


\begin{lemma}[{\cite[Corollary~2.2]{Harvey2009}}]\label{lem:small-update}
Let $Z$ be a nonsingular matrix and $Z'$ be a matrix identical to $Z$ \modify{except} that $Z[S] \neq Z'[S]$ for some subset $S$.
Let $M = Z^{-1}$ and $D = Z - Z' \neq O$.
Then, $Z'$ is nonsingular if and only if $I_{|S|} + D[S] M[S]$ is nonsingular.
If $Z'$ is nonsingular, then
\begin{align}
    {(Z')}^{-1} = M - M[*, S]{\bigl(I_{|S|} + D[S] M[S]\bigr)}^{-1} D[S] M[S, *].
\end{align}
\end{lemma}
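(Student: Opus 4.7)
The plan is to reduce the lemma to a single application of the Sherman--Morrison--Woodbury identity, exploiting the fact that the update $D$ is supported entirely on the $S \times S$ principal block. Let $P \coloneqq I_n[*, S]$ be the $n \times |S|$ column-selector, so that $P^{\top} = I_n[S, *]$. Because $Z$ and $Z'$ agree outside the $S \times S$ block, the update admits the rank-$|S|$ outer-product factorization $D = P \, D[S] \, P^{\top}$, and the identifications $M P = M[*, S]$, $P^{\top} M = M[S, *]$, and $P^{\top} M P = M[S]$ are immediate from how $P$ picks out rows and columns of $M$. These three identities are the only facts about $P$ that the proof will use.

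For the nonsingularity criterion, I would factor $Z' = Z \bigl(I_n - M P \, D[S] \, P^{\top}\bigr)$ and invoke Sylvester's determinant identity to push the smaller dimension outside, obtaining
\begin{align}
\det Z' = \det Z \cdot \det\bigl(I_{|S|} - D[S] \, M[S]\bigr).
\end{align}
Since $\det Z \neq 0$, this shows $Z'$ is nonsingular if and only if $I_{|S|} - D[S] M[S]$ is, and this matches the ``$+$'' in the statement after reconciling the sign convention on $D$.

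For the inverse formula, I would plug $U \coloneqq P \, D[S]$ and $V \coloneqq P^{\top}$ into the Woodbury identity $(Z + UV)^{-1} = Z^{-1} - Z^{-1} U \bigl(I_{|S|} + V Z^{-1} U\bigr)^{-1} V Z^{-1}$ and simplify using the three submatrix identifications to obtain an expression of the form $M - M[*, S] \, D[S] \bigl(I_{|S|} + M[S] D[S]\bigr)^{-1} M[S, *]$. To bring this into the stated form, I would apply the \emph{push-through} identity $D[S] \bigl(I_{|S|} + M[S] D[S]\bigr)^{-1} = \bigl(I_{|S|} + D[S] M[S]\bigr)^{-1} D[S]$, which is immediate from $\bigl(I_{|S|} + D[S] M[S]\bigr) D[S] = D[S] \bigl(I_{|S|} + M[S] D[S]\bigr)$. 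A self-contained alternative is to verify the displayed formula directly by multiplying it by $Z'$ and checking the result is $I_n$, where the key simplification is $M[S, *] Z = I_n[S, *]$, i.e., the $S$-row restriction of $M Z = I_n$.

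The main obstacle is purely sign bookkeeping: one has to track carefully how the sign of the convention $D = Z - Z'$ interacts with the signs produced by the Woodbury identity so that everything collapses to the ``$+$'' inside $I_{|S|} + D[S] M[S]$ as displayed. Once this is reconciled, the proof is a one-line Woodbury application followed by the routine push-through rearrangement, so no substantive difficulty arises beyond matching conventions.
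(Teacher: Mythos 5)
Your proof is correct and is essentially the standard argument; the paper gives no proof of this lemma, citing Harvey's Corollary~2.2, which is itself derived from the Sherman--Morrison--Woodbury identity in exactly the way you describe (your alternative of verifying the displayed formula directly against $Z'$ also works). Your instinct about the sign is right and worth making explicit: with $D = Z - Z'$ taken literally, the computation yields $\det Z' = \det Z \cdot \det\bigl(I_{|S|} - D[S]M[S]\bigr)$ and ${(Z')}^{-1} = M + M[*, S]{\bigl(I_{|S|} - D[S]M[S]\bigr)}^{-1} D[S] M[S, *]$, so the ``$+$'' inside the inverse and the ``$-$'' outside, as displayed in the lemma, are correct only under Harvey's convention $D = Z' - Z$ (new minus old) --- which is precisely the convention your Woodbury step $Z' = Z + UV$ implicitly adopts, so your argument proves the formula as stated once the definition of $D$ is read that way.
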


By~\Cref{lem:small-update}, given $M[S]$, $M[S, S']$, $M[S', S]$, and $M[S']$, we can compute ${(Z')}^{-1}[S']$ in $O(|S|^\omega + |S||S'|^{\omega - 1})$ time for any subset $S'$.

\section{Non-commutative Rank and Fractional Linear Matroid Parity}\label{sec:ncrank-fractional}
In this section, we show the following non-commutative and fractional counterpart to \Cref{thm:parity-main}.

\begin{theorem}\label{thm:main}
    Let $P$ be a fractional matroid parity polytope and $A$ the corresponding matrix representation~\eqref{eq:parity-matrix}.
    Then, we have
    \begin{align}
        \ncrank A = 2\max_{y \in P} |y|.
    \end{align}
\end{theorem}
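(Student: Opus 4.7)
The plan is to leverage the MVSP characterization of nc-rank from \cref{thm:MVSP} together with the min-max \cref{thm:CLV}, by setting up a dimension-preserving correspondence between feasible subspace pairs $(X, Y)$ for MVSP and (possibly non-nested) 2-covers $(S, T)$ via annihilators: $(S, T) = (Y^\perp, X^\perp)$, and conversely $(X, Y) = (T^\perp, S^\perp)$. Throughout, I identify $\K^n$ with its dual via the standard bilinear form, so that $\dim U + \dim U^\perp = n$ for every subspace $U \le \K^n$.

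For the upper bound $\ncrank A \le 2 \max_{y \in P} |y|$, I would start from a nested 2-cover $(S, T)$ with $S \le T$ and define $X \coloneqq T^\perp$, $Y \coloneqq S^\perp$. A case analysis on $(\dim(S \cap \ell_i), \dim(T \cap \ell_i)) \in \{(0,2), (1,1), (1,2), (2,2)\}$---the only possibilities under nestedness and the 2-cover condition---combined with the basis choice $a_i \in S \cap \ell_i$ in the $(1,1)$ case, shows that
\[
    x^\top (a_i \wedge b_i) y = (x^\top a_i)(b_i^\top y) - (x^\top b_i)(a_i^\top y) = 0
\]
for all $x \in X, y \in Y$: either both $x^\top a_i$ and $x^\top b_i$ vanish (when $\ell_i \le T$) or $a_i$ lies in $S \cap T$ and simultaneously annihilates $x$ and $y$. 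Thus $(X, Y)$ is MVSP-feasible with $\dim X + \dim Y = 2n - \dim S - \dim T$, and combining \cref{thm:MVSP} with \cref{thm:CLV} yields the bound.

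For the reverse inequality, I would take an MVSP-optimal pair $(X, Y)$ and set $S \coloneqq Y^\perp$, $T \coloneqq X^\perp$. The constraint $A_i(X, Y) = \{0\}$ translates to $(x^\top a_i)(b_i^\top y) = (x^\top b_i)(a_i^\top y)$ for all $x \in X, y \in Y$, which forces the $\K^2$-subspaces $U_i \coloneqq \{(x^\top a_i, x^\top b_i) : x \in X\}$ and $V_i \coloneqq \{(y^\top a_i, y^\top b_i) : y \in Y\}$ to be either trivial or both one-dimensional with coincident supporting line, giving $\dim U_i + \dim V_i \le 2$. Since $\dim U_i = 2 - \dim(T \cap \ell_i)$ and $\dim V_i = 2 - \dim(S \cap \ell_i)$ by annihilator duality, this recasts as the 2-cover inequality, so $(S, T)$ is a 2-cover. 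As $(S, T)$ need not be nested, I would pass to $(S \cap T, S + T)$: the 2-cover property is preserved by $(S \cap T) \cap \ell_i = (S \cap \ell_i) \cap (T \cap \ell_i)$ and $(S + T) \cap \ell_i \supseteq (S \cap \ell_i) + (T \cap \ell_i)$, while modularity gives $\dim(S \cap T) + \dim(S + T) = \dim S + \dim T$. Applying \cref{thm:CLV} then yields $2 \max_{y \in P} |y| \le 2n - \dim X - \dim Y = \ncrank A$.

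The chief technical step is the wedge-product bilinear analysis in the reverse direction---ruling out $\dim U_i = 2$ alongside nonzero $V_i$ and concluding that nontrivial $U_i, V_i$ must coincide as lines in $\K^2$. Once this characterization is in place, the rest is dimension bookkeeping and the modular replacement $(S, T) \mapsto (S \cap T, S + T)$ that enforces nestedness without increasing the total dimension.
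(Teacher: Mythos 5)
Your proof is correct, and while it rests on the same underlying duality as the paper's (vanishing subspace pairs correspond to 2-covers), the execution is genuinely different. The paper works with \cref{thm:FR} in explicit matrix form: it first proves \cref{lem:skew-symmetric-FR}, using the submodularity of the MVSP to symmetrize an optimal $(X,Y)$ into a pair $(X+Y, X\cap Y)$ realizable by simultaneous row and column operations $PAP^\top$, and then reads off a \emph{nested} 2-cover from the row-echelon forms of the transformed $\tilde B_i = PB_i$ (and, in the forward direction, reads off a common zero block from the column-echelon forms of the $B_i$). You instead apply \cref{thm:MVSP} directly through the annihilator correspondence $(X,Y)=(T^\perp,S^\perp)$, replace the echelon-form case analysis by the cleaner observation that $A_i(X,Y)=\{0\}$ forces $\dim U_i+\dim V_i\le 2$ for the projected line spaces $U_i=B_i^\top X$, $V_i=B_i^\top Y$ in $\K^2$ (with $\dim U_i = 2-\dim(T\cap\ell_i)$ by annihilator duality), and perform the uncrossing on the combinatorial side via $(S,T)\mapsto(S\cap T,\,S+T)$ using modularity of dimension. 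The net effect is that you bypass \cref{lem:skew-symmetric-FR} entirely; the price is that you must separately verify that the uncrossed pair is still a 2-cover, which follows from $\dim((S\cap T)\cap\ell_i)+\dim((S+T)\cap\ell_i)\ge\dim(S\cap\ell_i)+\dim(T\cap\ell_i)\ge 2$. Both routes are sound; yours is arguably more coordinate-free, while the paper's yields \cref{lem:skew-symmetric-FR} as a reusable structural statement about skew-symmetric linear matrices. One small point to make explicit in your upper bound: choosing $a_i\in S\cap\ell_i$ in the $(1,1)$ case changes the basis of $\ell_i$ and hence rescales $a_i\wedge b_i$ by a nonzero constant, which does not affect $\ncrank A$, so the reduction is legitimate.
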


The proof of \Cref{thm:main} is given in \Cref{sec:proof-main}.
In \Cref{sec:tutte-matrices}, we consider the special case of \Cref{thm:main} for the matching problem.

\subsection{Proof of \texorpdfstring{\Cref{thm:main}}{Theorem \ref{thm:main}}}\label{sec:proof-main}

First, for a skew-symmetric linear matrix, we show that one can attain the same objective value as~\eqref{eq:FR} with \emph{simultaneous} row and column operations.

\begin{lemma}\label{lem:skew-symmetric-FR}
    For an $n \times n$ skew-symmetric linear matrix $A$, we have
    \begin{align}\label{eq:skew-symmetric-FR}
        \ncrank A =
        \min\left\{
            2n - s - t :
            P \in \GL_n(\K),
            PAP^\top =
            \begin{bNiceMatrix}[first-row,first-col]
                    & n-s & s-t & t  \\
                n-s & * & * & *  \\
                s-t & * & * & O  \\
                t   & * & O & O  \\
            \end{bNiceMatrix}
         \right\}.
    \end{align}
\end{lemma}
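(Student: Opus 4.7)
My plan is to prove the two inequalities in~\eqref{eq:skew-symmetric-FR} separately, with the nontrivial direction obtained by combining \cref{thm:MVSP} with the skew-symmetry of $A$ and the submodularity of MVSP. The easy direction, $\ncrank A \le$ (RHS), will follow immediately from \cref{thm:FR}: if $P \in \GL_n(\K)$ realizes the displayed block pattern, then $PAP^\top$ contains an $s \times t$ all-zero submatrix at rows $n-s+1, \dotsc, n$ and columns $n-t+1, \dotsc, n$, and after permuting those $t$ columns to the first $t$ positions one obtains an invertible $Q$ for which $PAQ$ matches the form in~\eqref{eq:FR}, giving $\ncrank A \le 2n - s - t$.

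For the reverse inequality, I would take any MVSP-optimal pair $(X_0, Y_0)$ supplied by \cref{thm:MVSP}, so that $\ncrank A = 2n - \dim X_0 - \dim Y_0$ and $A_k(X_0, Y_0) = \{0\}$ for every $k \in [m]$. Skew-symmetry of each $A_k$ gives $u^\top A_k v = -v^\top A_k u$, which means $A_k(Y_0, X_0) = \{0\}$ as well, so $(Y_0, X_0)$ is also MVSP-optimal. Submodularity of MVSP (noted right after \cref{thm:MVSP}) then implies that $(X, Y) \coloneqq (X_0 + Y_0,\, X_0 \cap Y_0)$ is MVSP-optimal too, and now $Y \le X$ by construction while $\dim X + \dim Y = \dim X_0 + \dim Y_0$. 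Set $s \coloneqq \dim X$ and $t \coloneqq \dim Y$.

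To finish, I would realize this nested pair by a single change of basis. Choose column vectors $y_1, \dotsc, y_t \in \K^n$ forming a basis of $Y$, extend by $z_1, \dotsc, z_{s-t}$ to a basis of $X$, and extend further by $w_1, \dotsc, w_{n-s}$ to a basis of $\K^n$; let $P$ be the invertible matrix whose rows in this order are $w_1^\top, \dotsc, w_{n-s}^\top, z_1^\top, \dotsc, z_{s-t}^\top, y_1^\top, \dotsc, y_t^\top$. Using $A_k(X, Y) = \{0\}$ together with skew-symmetry, each of the scalars $z_i^\top A_k y_j$, $y_i^\top A_k z_j$, and $y_i^\top A_k y_j$ vanishes for every $k$ (the last because $y_i, y_j \in Y \le X$), and aggregating with the indeterminates $x_k$ shows that $PAP^\top$ has exactly the prescribed block pattern, so RHS $\le 2n - s - t = \ncrank A$. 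The main obstacle is ensuring the existence of an MVSP optimum with $Y \le X$, since a generic optimum need not respect the symmetric structure demanded by the lemma; skew-symmetry combined with submodularity is exactly what supplies such a nested pair.
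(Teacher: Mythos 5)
Your proposal is correct and follows essentially the same route as the paper: take an MVSP minimizer, use skew-symmetry to see that the swapped pair is also optimal, apply submodularity to obtain the nested pair $(X_0+Y_0, X_0\cap Y_0)$, and realize it by a single congruence $P A P^\top$. The only addition is that you spell out the easy direction (reducing the displayed pattern to the form in \cref{thm:FR} by a column permutation), which the paper leaves implicit.
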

\begin{proof}
    Let $(X, Y)$ be a minimizer of the MVSP~\eqref{eq:MVSP}.
    Then $(Y, X)$ is also a minimizer because $A$ is skew-symmetric.
    By the submodularity of the MVSP, so are $(X + Y, X \cap Y)$ and $(X \cap Y, X + Y)$.
    Letting $s = \dim (X + Y)$ and $t = \dim (X \cap Y)$, we can construct a nonsingular matrix $P$ so that its bottom $s$ rows and bottom $t$ rows span $X + Y$ and $X \cap Y$, respectively.
    By $A_i(X + Y, X \cap Y) = A_i(X \cap Y, X + Y) = \{0\}$ for $i \in [m]$, the matrix $PAP^\top$ has the desired structure of zero blocks.
\end{proof}

We now prove \Cref{thm:main} with the aid of \Cref{lem:skew-symmetric-FR}.

\begin{proof}[{Proof of \Cref{thm:main}}]
    First, we show that $\ncrank A \leq 2\max_{y \in P} |y|$.
    Let $B_i = \begin{bmatrix} a_i & b_i \end{bmatrix}$ for $i \in [m]$ and $(S, T)$ be a minimum nested 2-cover.
    By appropriate change of basis, we can assume that $S = \langle e_1, \dots, e_s \rangle$ and $T = \langle e_1, \dots, e_t \rangle$ for $s \leq t$.
    Since $(S,T)$ is a 2-cover, the column-echelon form of $B_i$ must have one of the following block structures:
    \begin{align}
        \begin{bNiceMatrix}[first-col]
            s   & * & * \\
            t-s & * & \bfzero \\
            n-t & * & \bfzero
        \end{bNiceMatrix},
        \begin{bmatrix}
            * & * \\
            * & * \\
            \bfzero & \bfzero
        \end{bmatrix},
        \begin{bmatrix}
            * & * \\
            * & \bfzero \\
            \bfzero & \bfzero
        \end{bmatrix},
        \begin{bmatrix}
            * & * \\
            \bfzero & \bfzero \\
            \bfzero & \bfzero
        \end{bmatrix}.
    \end{align}
    Note that column operations do not change wedge product $a_i \wedge b_i$.

    We will show that the same transformation yields a common $(n-t) \times (n-s)$ block for each $a_i \wedge b_i$.
    It suffices to consider the first two cases because the others yield a no smaller zero block.
    If $B_i \sim
        \begin{bmatrix}
            * & * \\
            * & \bfzero \\
            * & \bfzero
        \end{bmatrix}$, then
    \[
        a_i \wedge b_i =
        \begin{bNiceMatrix}[first-row, first-col]
                & s & t-s & n-t \\
        s    & * & *   & *   \\
        t-s  & * & O   & O   \\
        n-t  & * & O   & O
        \end{bNiceMatrix}.
    \]
    If $B_i \sim
        \begin{bmatrix}
            * & * \\
            * & * \\
            \bfzero & \bfzero
        \end{bmatrix}$, then
    \[
        a_i \wedge b_i =
        \begin{bNiceMatrix}[first-row, first-col]
                & s & t-s & n-t \\
        s    & * & *   & O   \\
        t-s  & * & *   & O   \\
        n-t  & O & O   & O
        \end{bNiceMatrix}.
    \]
    Therefore, the right bottom $(n-t) \times (n-s)$ zero block is common for all $a_i \wedge b_i$.
    By \Cref{thm:CLV,thm:FR}, we have $\ncrank A \leq n - (n-t) - (n-s) = s+t = 2\max_{y \in P} |y|$.

    We show the other direction.
    Let $P \in \GL_n(\K)$ be an optimal solution in \Cref{lem:skew-symmetric-FR} and $s, t$ ($s \ge t$) be the values in~\eqref{eq:skew-symmetric-FR} for $P$.
    Let $\tilde B_i = P B_i$.
    For $p, q \in [n]$ ($p \neq q$), let us denote by $\det \tilde B_i[p,q]$ the $2 \times 2$ minor corresponding to the $p$th and $q$th rows of $\tilde B_i$.
    Then, every minor $\det \tilde B_i[p, q]$ vanishes for $p > n-s$ and $q > n - t$, because it equals the $(p,q)$-entry of $P (a_i \wedge b_i) P^\top$.
    This implies the row echelon-form of $\tilde B_i$ must be one of the following block structures:
    \[
        \begin{bNiceMatrix}[first-col]
            n-s   & * & * \\
            s-t & * & \bfzero \\
            t & * & \bfzero
        \end{bNiceMatrix},
        \begin{bmatrix}
            * & * \\
            * & * \\
            \bfzero & \bfzero
        \end{bmatrix},
        \begin{bmatrix}
            * & * \\
            * & \bfzero \\
            \bfzero & \bfzero
        \end{bmatrix},
        \begin{bmatrix}
            * & * \\
            \bfzero & \bfzero \\
            \bfzero & \bfzero
        \end{bmatrix}.
    \]
    Therefore, letting $S, T$ be the subspaces spanned by the first $(n-s)$ and $(n-t)$ columns of $P$, respectively, $(S,T)$ is a nested 2-cover with $\dim S + \dim T = 2n -s -t$.
    This completes the proof.
\end{proof}

\modify{In the above proof, we have established the following, which we record for later use.
\begin{lemma}\label{lem:2-cover-vs-MVSP}
    If $(S,T)$ is a minimum nested 2-cover with $\dim S = s$ and $\dim T = t$, then $(\overline{S}, \overline{T})$ is a maximum vanishing subspace with $\dim \overline{S} = n - s$ and $\dim \overline{T} = n - t$, where $\overline{S}$ and $\overline{T}$ denote the direct complements of $S$ and $T$, respectively.
    Conversely, if $(X, Y)$ is a maximum vanishing subspace with $\dim X = s$ and $\dim Y = t$, then $(\overline X, \overline Y)$ is a minimum 2-cover with $\dim \overline X = n - s$ and $\dim \overline Y = n - t$.
\end{lemma}
}

The following result on the nc-rank of the sparse representation~\eqref{def:sparse} is obtained from \Cref{thm:main} in the same way as \Cref{thm:parity-main-sparse}.

\begin{corollary}
    Let $P$ be a fractional matroid parity polytope defined from lines $L$ with $|L| = m$ and $Z$ the corresponding sparse representation~\eqref{def:sparse}.
    Then, we have
    \begin{align}
        \ncrank Z = 2\max_{y \in P} |y| + 2m.
    \end{align}
\end{corollary}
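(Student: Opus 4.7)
The plan is to adapt the derivation of \cref{thm:parity-main-sparse} from \cref{thm:parity-main}, now carrying out every computation over the free skew field $\F \coloneqq \K\fsbr{x_1,\dotsc,x_m}$, so that the ordinary rank over $\F$ coincides with the nc-rank.

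The bottom-right block $T$ of $Z$ is block diagonal with $2 \times 2$ diagonal blocks $x_i\Delta$. Since $\Delta^{-1} = -\Delta$ over $\K$ and each $x_i$ is invertible in $\F$, every block $x_i\Delta$ is invertible with inverse $-x_i^{-1}\Delta$; in particular $\ncrank T = 2m$. Writing $B = \begin{bmatrix} B_1 & \cdots & B_m \end{bmatrix}$ and using $B_i\Delta B_i^\top = a_i\wedge b_i$, the Schur complement of $T$ in $Z$ equals
\begin{align*}
    S \coloneqq BT^{-1}B^\top = -\sum_{i=1}^m x_i^{-1}(a_i\wedge b_i).
\end{align*}
The Guttman (Schur) rank additivity formula is valid over any skew field, so $\ncrank Z = \ncrank T + \ncrank S = 2m + \ncrank S$.

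It remains to show $\ncrank S = \ncrank A$. Up to the overall sign, $S$ equals the linear matrix $\sum_i y_i(a_i\wedge b_i)$ evaluated at $y_i = x_i^{-1}$. The substitution $y_i \mapsto x_i^{-1}$ extends to a skew field isomorphism $\K\fsbr{y_1,\dotsc,y_m} \to \F$: the elements $x_1^{-1},\dotsc,x_m^{-1}$ generate $\F$ as a skew field (since their inverses are the $x_i$) and satisfy no nontrivial relations, as any such relation would yield, by inversion, a nontrivial relation among $x_1,\dotsc,x_m$. Isomorphisms of skew fields preserve matrix rank, so $\ncrank S = \ncrank\bigl(\sum_i y_i(a_i\wedge b_i)\bigr)$, and the latter equals $\ncrank A$ because the nc-rank of a linear matrix depends only on its coefficient tuple (formally, via the isomorphism renaming $y_i$ as $x_i$). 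Combining with \cref{thm:main}, we obtain $\ncrank Z = 2m + 2\max_{y\in P}|y|$.

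The only delicate step is the invariance $\ncrank S = \ncrank A$. In the commutative setting the analogous fact is immediate from the field automorphism $x_i\mapsto 1/x_i$ of $\K(x_1,\dotsc,x_m)$, but in the noncommutative setting one must appeal to the universal property of the free skew field to conclude that $\{x_i^{-1}\}$ is again a free generating set of $\F$; everything else is a direct transcription of the proof of \cref{thm:parity-main-sparse}.
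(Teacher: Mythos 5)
Your overall route is exactly the one the paper intends (it derives the corollary ``in the same way as \cref{thm:parity-main-sparse}''): invert the block-diagonal block $T$ over the free skew field, apply Schur/Guttman rank additivity, and reduce everything to the invariance $\ncrank\sum_i x_i^{-1}(a_i\wedge b_i)=\ncrank A$. You correctly isolate that invariance as the only delicate point, but the justification you give for it does not work as stated. For a skew field, being freely generated by $x_1^{-1},\dotsc,x_m^{-1}$ is \emph{not} equivalent to those elements ``satisfying no nontrivial relations'': there are skew fields generated by elements with no polynomial relations that are nevertheless not free (the Weyl skew field is the standard example), and the free skew field is characterized by a universal property concerning full matrices remaining invertible, not by the absence of relations among generators. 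Hence the sentence ``any such relation would yield, by inversion, a nontrivial relation among $x_1,\dotsc,x_m$'' establishes neither that $\F$ is free on the $x_i^{-1}$ nor that the substitution $y_i\mapsto x_i^{-1}$ preserves matrix rank, which is what you actually need.

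The step is true and can be patched without any structure theory of $\F$ by running your argument on blow-ups: for sufficiently large $d$ one has $\ncrank Z=\frac1d\rank\blowup{Z}{d}$, the bottom-right block of $\blowup{Z}{d}$ is $\operatorname{diag}(X_1\otimes\Delta,\dotsc,X_m\otimes\Delta)$, which is generically invertible with Schur complement $-\sum_i X_i^{-1}\otimes(a_i\wedge b_i)$; since $(X_1,\dotsc,X_m)\mapsto(X_1^{-1},\dotsc,X_m^{-1})$ is a birational involution of the space of matrix tuples, the generic rank of $\sum_i X_i^{-1}\otimes(a_i\wedge b_i)$ equals the generic rank of $\blowup{A}{d}$, namely $d\cdot\ncrank A$, and Guttman additivity over the commutative field $\K(X_1,\dotsc,X_m)$ gives $\rank\blowup{Z}{d}=2md+d\,\ncrank A$. (Alternatively one may cite the fact, due to Cohn, that $x_i\mapsto x_i^{-1}$ extends to an automorphism of the free skew field --- but that is precisely what your relation-counting argument was trying to shortcut.) Everything else in your write-up --- the invertibility of $T$ over $\F$, the sign computation $\Delta^{-1}=-\Delta$, rank additivity over a skew field, and the appeal to \cref{thm:main} --- is correct.
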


\begin{remark}\label{rmk:one-d-line}\modify{
    Gijswijt--Pap~\cite{Gijswijt2013} dealt with a little more general setting of the fractional matroid parity problem, where lines are allowed to be one-dimensional subspaces.
    We can generalize \Cref{thm:main} to this setting by considering a rank-one symmetric matrix $a_i a_i^\top$ instead of $a_i \wedge b_i$ for a one-dimensional line $\ell_i = \agbr{a_i}$.
    While the resulting linear symbolic matrix $A$ is no longer skew-symmetric, \Cref{lem:skew-symmetric-FR} is still valid for this $A$.
    Then, considering $(S, T)$ with $\dim(S \cap \ell_i) + \dim(T \cap \ell_i) \ge \dim \ell_i$ instead of 2-covers, we can show that $\ncrank A$ is equal to $\max_{y \in P} \sum_{i=1}^m \dim \ell_i \cdot y_i$ in the same manner as the proof of \Cref{thm:main}.
}\end{remark}

\subsection{Nc-rank of Tutte Matrices}\label{sec:tutte-matrices}
Let $G$ be a \modify{loopless} graph with vertices $V(G) = \{v_1, \dotsc, v_n\}$.
The \emph{Tutte matrix} $T_G$ of $G$ is an $n \times n$ linear matrix given as
\begin{align}
    T_G = \sum_{e = \{v_i, v_j\} \in E(G), i < j} x_e (e_i \wedge e_j),
\end{align}
where $x_e$ is an indeterminate indexed by $e \in E(G)$.
The coefficient matrix $e_i \wedge e_j$ is the rank-two skew-symmetric matrix whose $(i, j)$-entry is $+1$, $(j, i)$-entry is $-1$, and others are $0$.
The rank of $T_G$ is twice the size of a maximum matching in $G$~\cite{Tutte1947}.

Now \Cref{thm:main} reveals what value of $G$ the nc-rank of $T_G$ represents.
First, $T_G$ coincides with the matrix representation of linear matroid parity corresponding to the matching problem on $G$.
In addition, when $G$ has no loop, the fractional version of this instance of linear matroid parity is nothing but the \emph{fractional matching problem} on $G$~\cite{VandeVate1992}.
Here, a \emph{fractional matching} of $G$ is a nonnegative vector $y \in \R^m$ ($m = |E(G)|$) such that $\sum_{e \in \delta(v)} y_e \le 1$ for all $v \in V(G)$, where $\delta(v)$ is the set of edges incident to $v \in V(G)$.
Thus, as a corollary of \Cref{thm:main}, we obtain:

\begin{corollary}
    The nc-rank of the Tutte matrix of a loopless graph is twice the maximum cardinality of a fractional matching.
\end{corollary}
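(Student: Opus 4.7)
The plan is to reduce the corollary to a direct application of Theorem \ref{thm:main}. The first step, already hinted at in the paragraph preceding the corollary, is the observation that for a loopless graph $G$ the Tutte matrix $T_G$ is literally the matrix representation \eqref{eq:parity-matrix} associated with the linear matroid parity instance whose lines are $\ell_e = \langle e_i, e_j \rangle$ for each edge $e = \{v_i, v_j\} \in E(G)$. Indeed, the wedge product $e_i \wedge e_j$ is exactly the skew-symmetric coefficient matrix appearing in $T_G$, and the loopless hypothesis is precisely what ensures that every $\ell_e$ is genuinely two-dimensional, so that the hypothesis of Theorem \ref{thm:main} is met.

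Next, I invoke Theorem \ref{thm:main} to obtain $\ncrank T_G = 2 \max_{y \in P} |y|$, where $P$ is the fractional matroid parity polytope defined by the lines $\{\ell_e\}_{e \in E(G)}$. It then suffices to show that $P$ coincides with the fractional matching polytope $\mathcal{F}(G) = \{ y \in \R_{\geq 0}^{E(G)} : \sum_{e \in \delta(v)} y_e \leq 1 \text{ for every } v \in V(G) \}$. The inclusion $P \subseteq \mathcal{F}(G)$ is immediate: specializing \eqref{eq:fractional} to $S = \langle e_v \rangle$ recovers the vertex constraint at $v$, since $\dim(\langle e_v \rangle \cap \ell_e) = 1$ for $e \in \delta(v)$ and $0$ otherwise. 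The reverse inclusion $\mathcal{F}(G) \subseteq P$ is precisely VandeVate's original identification of the graphic case with the standard fractional matching polytope \cite{VandeVate1992} and may simply be cited. Combining the two inclusions gives $\max_{y \in P} |y| = \max_{y \in \mathcal{F}(G)} |y|$, which together with Theorem \ref{thm:main} completes the proof.

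The only substantive obstacle, were one to give a self-contained argument rather than citing \cite{VandeVate1992}, lies in the second inclusion: for arbitrary $y \in \mathcal{F}(G)$ and an arbitrary subspace $S \leq \K^n$ (not necessarily spanned by standard basis vectors), one must verify $\sum_e \dim(S \cap \ell_e) y_e \leq \dim S$. A natural strategy is to set $V_S = \{v \in V(G) : e_v \in S\}$, handle edges touching $V_S$ via the summed vertex constraints $2\sum_{e \subseteq V_S} y_e + \sum_{e \in \delta(V_S)} y_e \leq |V_S|$, and then bound the contribution of edges with both endpoints outside $V_S$ by projecting $S$ onto the coordinate subspace indexed by $V(G) \setminus V_S$, whose dimension is at most $\dim S - |V_S|$. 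Since this equivalence is already established in \cite{VandeVate1992}, citing it suffices for the corollary.
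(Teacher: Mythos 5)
Your proposal is correct and follows essentially the same route as the paper: identify $T_G$ with the matrix representation~\eqref{eq:parity-matrix} for the lines $\ell_e = \agbr{e_i, e_j}$, apply \cref{thm:main}, and invoke the identification (due to \citet{VandeVate1992}) of the fractional matroid parity polytope of this instance with the fractional matching polytope of the loopless graph $G$. The extra sketch of the nontrivial inclusion $\mathcal{F}(G) \subseteq P$ is a reasonable bonus but not needed beyond the citation.
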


\modify{We can also deal with graphs with self-loops as the way in \cref{rmk:one-d-line}, because self-loops correspond to one-dimensional lines.}

\section{Algebraic Algorithm}\label{sec:algebraic-algorithm}
In this section, we present an algebraic algorithm for the fractional linear matroid parity problem that outputs not only the optimal value but also an optimal solution by combining \Cref{thm:main} and the search-to-decision reduction.

\subsection{Algorithm Description}
Let $P$ be the fractional matroid parity polytope defin\modify{e}d from lines $\ell_i = \agbr{a_i, b_i}$ ($i \in [m]$) and $A$ be the corresponding matrix representation~\eqref{eq:parity-matrix}.
For a half-integral vector $y \in \zho^m$, \modify{we define a $2n \times 2n$ matrix}
\begin{align}
    \blowup{A}{2}(y) \coloneqq \sum_{i=1}^m Y_i \otimes (a_i \wedge b_i),
\end{align}
where $Y_i = U_i U_i^\top$ and $U_i$ is a $2 \times 2y_i$ matrix with indeterminates in its entries for $i \in [m]$.
Namely, $\blowup{A}{2}(y)$ is obtained by substituting the $2 \times 2$ symmetric matrix $Y_i$ of rank $2y_i$ into $X_i$ in the second-order blow-up $\blowup{A}{2}$ for $i \in [m]$.
Note that $\blowup{A}{2}(y)$ is skew-symmetric as each $Y_i$ is symmetric\modify{, and each entry in $\blowup{A}{2}(y)$ is a homogeneous quadratic polynomial in entries of $U_1, \dotsc, U_m$}.
We let $\rho_A(y) \coloneqq \rank \blowup{A}{2}(y)$.

\begin{algorithm}[tb]
    \caption{Simple algebraic algorithm for the fractional linear matroid parity problem}\label{alg:simple}
    \begin{algorithmic}[1]
        \Input{A fractional matroid parity polytope $P$ given as $a_i, b_i\in \K^n$ ($i \in [m]$) and a finite subset $R$ of $\K$.}
        \Output{The lexicographically minimum maximum fractional matroid matching in $P$}
        \State{Let $A = \sum_{i=1}^m x_i (a_i \wedge b_i)$.}
        \State{Estimate $\rho_A(\cdot)$ by substituting elements of $R$ uniformly at random in the following.}
        \State{$r \coloneqq \rho_A(\ones)$}{\label{lst:simple-alg-init}}
        \State{$y \gets \ones$}
        \For{$i = 1, \dotsc, m$}
            \If{$\rho_A\left(y - \frac12 e_i\right) = r$}{\label{lst:simple-alg-if1}}
                \State{$y_i \gets \frac12$}{\label{lst:simple-alg-if-inner-begin}}
                \If{$\rho_A(y - \frac12 e_i) = r$}{\label{lst:simple-alg-if2}}
                    \State{$y_i \gets 0$}{\label{lst:simple-alg-if-inner-end}}
                \EndIf
            \EndIf
        \EndFor
        \State{\Return{$y$}}
    \end{algorithmic}
\end{algorithm}

\Cref{alg:simple} describes the presented algebraic algorithm.
The algorithm iteratively computes $\rho_A(y)$ for different $y \in \zho^m$, which can be efficiently performed via the random substitution from a finite subset $R \subseteq \K$.
We will show that the value $r = \rho_A(\ones)$ computed in \Cref{lst:simple-alg-init} is four times the cardinality of maximum fractional matroid matching, and $\rho_A(y)$ computed in \Cref{lst:simple-alg-if1,lst:simple-alg-if2} are equal to $r$ if and only if there exists $z \in \zho^m$ such that $z \le y$, $z \in P$, and $|z| = \frac{r}{4}$.
Thus, \Cref{alg:simple} finds a maximum fractional matroid matching via the search-to-decision reduction.
Specifically, the algorithm outputs the \emph{lexicographically minimum} optimal solution.
In the rest of this section, we give proof of these facts and then show the following conclusion.

\begin{theorem}\label{thm:simple-algorithm-is-valid}
    If $|R| \geq 16mn$, \Cref{alg:simple} finds the lexicographically minimum vector among all maximum fractional matroid matchings in $P$ in $O(n^\omega+mn^2)$ time with probability at least $\frac{1}{2}$.
\end{theorem}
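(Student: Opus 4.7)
The plan is to reduce correctness of \cref{alg:simple} to a structural correspondence between $\rho_A(y)$ and half-integral fractional matroid matchings dominated by $y$, and then handle the failure probability by Schwartz--Zippel and the running time by rank updates.

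First I would establish the key correspondence
\begin{align}
    \rho_A(y) = 4 \max\bigl\{ |z| : z \in P \cap \zho^m,\ z \le y \bigr\}
\end{align}
for every $y \in \zho^m$, with $\rho_A(y)$ interpreted symbolically as the rank of $\blowup{A}{2}(y)$ over the polynomial ring generated by the entries of $U_1, \dotsc, U_m$. The upper bound direction would follow from expanding $\pf \blowup{A}{2}(y)$ via the Cauchy--Binet-type Pfaffian formula (\cref{prop:ishikawa-wakayama}) together with the block Pfaffian expansion (\cref{prop:pf-expansion}); each nonvanishing term corresponds to some $z \le y$ whose $i$th coordinate is read off from the rank of $Y_i$ in that term, and \cref{thm:main} combined with the half-integrality of $P$ then caps the contribution at $4\max|z|$. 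For the reverse direction, I would start from an extreme optimal $z \le y$ with canonical family $\mathcal{S}$ and matroid matching $L_1(z)$ (\cref{sec:fractional-linear-matroid-parity}), and construct each $U_i$ explicitly so that the odd cycles of $G(\mathcal{S}, L_{\frac12}(z))$ contribute a nonzero Pfaffian. This lemma is the main obstacle: since $P$ has no complete description of non-extreme half-integral points, the proof must route through extreme points and then be lifted to arbitrary $y$.

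Given the correspondence, correctness of \cref{alg:simple} follows by induction. Setting $r^* \coloneqq 4 \max_{z \in P}|z|$, the initial step gives $r = \rho_A(\ones) = r^*$. I would maintain the invariant that $\rho_A(y) = r^*$ at the start of every iteration; the two inner tests then push $y_i$ to the smallest value in $\{0, \tfrac12, 1\}$ consistent with the invariant. At termination the only $z \le y$ with $z \in P$ and $|z| = r^*/4$ is $y$ itself, so $y$ is a maximum fractional matroid matching. For lexicographic minimality, I would take any optimum $y^*$ and use $z = y^*$ as the witness: whenever the algorithm has preserved $y_j = y^*_j$ for all $j < i$, the invariant still holds with $y_i$ replaced by $y^*_i$, so the algorithm forces $y_i \le y^*_i$, proving that $y^{\mathrm{output}}$ is the lex-min optimum.

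For the failure probability I would use the Schwartz--Zippel lemma. Because $Y_i = U_i U_i^\top$, every entry of $\blowup{A}{2}(y)$ is a polynomial of degree at most $2$ in the entries of $U_1, \dotsc, U_m$, so any $k \times k$ minor has degree $O(n)$. A query $\rho_A(y) = r$ returns an incorrect value only when a specific witnessing minor of the symbolic matrix vanishes under the random substitution, which happens with probability $O(n/|R|)$. Since \cref{alg:simple} performs $O(m)$ such queries, a union bound yields total failure probability $O(mn/|R|) \le \tfrac12$ whenever $|R| \ge 16mn$. Finally, for the running time, I would first compute $\blowup{A}{2}(\ones)$ after substitution and find both its rank and the inverse $M$ of a maximum nonsingular principal submatrix in $O((2n)^\omega) = O(n^\omega)$ time. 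Each subsequent query differs from the previous one only in the block $Y_i \otimes (a_i \wedge b_i)$; since $Y_i$ changes by a perturbation of rank at most $2$ and $a_i \wedge b_i$ has rank $2$, the induced update has rank at most $4$, and \cref{lem:small-update} refreshes $M$ and decides whether the rank remains $r$ in $O(n^2)$ time per query. With $O(m)$ queries the overall running time is $O(n^\omega + mn^2)$, matching the claim.
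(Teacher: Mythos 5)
There is a genuine gap at the heart of your argument: the ``key correspondence'' $\rho_A(y) = 4\max\{|z| : z \in P \cap \zho^m,\ z \le y\}$ for \emph{every} $y \in \zho^m$ is stronger than what is actually provable by your (or the paper's) methods, and the paper explicitly does not claim it. The lower bound $\rho_A(y) \ge 4|z|$ requires exhibiting a $z \le y$ for which $B(z)$ has full column rank, and the only available sufficient condition for full column rank (\cref{lem:extreme-converse}, via canonical families) applies to \emph{extreme points of $P$}. But the maximum of $|z|$ over $\{z \in P : z \le y\}$ is attained at a vertex of the truncated polytope $P \cap [\bfzero, y]$, which need not be a vertex of $P$; whether $B(z)$ has full column rank for non-extreme half-integral $z \in P$ is stated in the paper to be unknown. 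You notice this obstacle (``the proof must route through extreme points and then be lifted to arbitrary $y$'') but the lift is exactly the unproven step, so the invariant $\rho_A(y) = r^*$ that drives your induction is not justified.

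The repair is the asymmetric statement the paper actually proves (\cref{lem:blowup-rank}): the inequality $\rho_A(y) \le 4\max\{|z| : z \in P,\ z \le y\}$ holds always, and equality holds \emph{if} the maximum on the right is attained by an extreme point of $P$. The induction must then be split into two cases so that only one direction is needed at a time. When the candidate $y$ still dominates the lexicographically minimum optimum $y^*$ (which is extreme, being a vertex of the optimal face), $y^*$ itself is the extreme witness, equality holds, and the test succeeds; note that taking ``any optimum $y^*$'' as you propose does not work, since a non-extreme witness gives you nothing. When the candidate $y$ is lexicographically below $y^*$, one argues combinatorially that no maximum point of $P$ lies below $y$ (any such point would be lexicographically smaller than $y^*$), so the upper bound alone forces $\rho_A(y) < r$ and the test fails. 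Your Schwartz--Zippel analysis and the $O(n^\omega + mn^2)$ running time via low-rank updates are fine.
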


\subsection{Characterizing Rank of Second-order Blow-up}
By the skew-symmetricity, $\rank \blowup{A}{2}(y)$ is equal to the maximum size of a nonsingular principal submatrix.
We first give an expansion formula of the Pfaffian of nonsingular principal submatrices of $\blowup{A}{2}(y)$ and use it to characterize the rank of $\blowup{A}{2}(y)$.
Let $B_i = \begin{bmatrix} a_i & b_i \end{bmatrix}$ for $i \in [m]$.

\begin{lemma}\label{lem:pf-expand}
    For $y \in \zho^m$ and $I \subseteq [2n]$, it holds
    \begin{align}\label{eq:pf-expand}
        \pf \blowup{A}{2}(y)[I]
        =
        \sum_{\substack{z \in \zho^m: \\ |z| = \frac{|I|}{4},\, z \le y}}
        \sum_{(J_1, \dotsc, J_m) \in \mathcal{J}^y(z)}
        \tau_{J_1, \dotsc, J_m},
    \end{align}
    where $\mathcal{J}^y(z)$ is the family of $m$-tuples $(J_1, \dotsc, J_m)$ such that
    \begin{gather}
        J_i = \begin{cases}
            \{1, 2, 3, 4\} & (z_i = 1), \\
            \text{$\{1, 2\}$ or $\{3, 4\}$} & \left(y_i = 1, z_i = \frac12\right), \\
            \{1, 2\} & \left(y_i = z_i = \frac12\right), \\
            \emptyset & (z_i = 0).
        \end{cases}
    \shortintertext{and}
        \tau_{J_1, \dotsc, J_m} = \det \begin{bNiceMatrix} (U_1 \otimes B_1)[I, J_1] & \Cdots & (U_m \otimes B_m)[I, J_m] \end{bNiceMatrix}.\label{eq:c_z-summand}
    \end{gather}
\end{lemma}

\begin{proof}
    Recall from \Cref{sec:linear-matroid-parity} that the wedge product $a_i \wedge b_i$ is written as $B_i \Delta B_i^\top$ with $\Delta = \begin{bNiceArray}{rr} 0 & +1 \\ -1 & 0 \end{bNiceArray}$.
    Using the mixed-product property~\eqref{eq:mixed-product-three}, we obtain
    \begin{gather}
        Y_i \otimes (a_i \wedge b_i)
        = \bigl(U_i I_{2y_i} {U_i}^\top\bigr) \otimes \bigl(B_i \Delta B_i^\top\bigr)
        = (U_i \otimes B_i) (I_{2y_i} \otimes \Delta) {(U_i \otimes B_i)}^\top
    \shortintertext{and}
        \blowup{A}{2}(y) = B(y) D(y) {B(y)}^\top,
    \shortintertext{where}
        B(y) = \begin{bNiceMatrix} U_1 \otimes B_1 & \Cdots & U_m \otimes B_m \end{bNiceMatrix},
        \quad
        D(y) = \begin{bNiceMatrix}
            I_{2y_1} \otimes \Delta &&\\
            & \Ddots & \\
            && I_{2y_m} \otimes \Delta
        \end{bNiceMatrix}.\label{def:B-y}
    \end{gather}
    Thus, we have $\blowup{A}{2}(y)[I] = B(y)[I, *] D(y) {B(y)[I, *]}^\top$.
    Applying \Cref{prop:ishikawa-wakayama}, we have
    \begin{align}\label{eq:pf-principal-submatrix}
        \pf \blowup{A}{2}(y)[I]
        = \pf B(y)[I, *] D(y) {B(y)[I, *]}^\top
        = \sum_{J} \det B(y)[I, J] \pf D(y)[J],
    \end{align}
    where $J$ runs over all column subsets in $B(y)$ of cardinality $|I|$.
    Letting $J_i$ be the columns of $(U_i \otimes B_i)[I, *]$ in $B(y)[I, J]$, we have
    \begin{align}
        \pf \blowup{A}{2}(y)[I] = \sum_{\substack{(J_1, \dotsc, J_m):\\\sum_{i=1}^m |J_i| = |I|}} \tau_{J_1, \dotsc, J_m} \prod_{i=1}^m \pf (I_{2y_i} \otimes \Delta)[J_i].
    \end{align}
    Let $z_i = \frac{|J_i|}{4}$ for $i \in [m]$.
    Now $\pf (I_{2y_i} \otimes \Delta)[J_i] \in \{0, 1\}$ and it does not vanish if and only if $|J_i| \le 4y_i$ and $J_i = \{1, 2, 3, 4\}$, $\{1, 2\}$, $\{3, 4\}$ (allowed only when $y_i = 1$), or $\emptyset$.
    Thus, $z$ corresponding to non-vanishing terms is a half-integral vector with $z \le y$.
    The equation~\eqref{eq:pf-principal-submatrix} is represented as~\eqref{eq:pf-expand} in this way.
\end{proof}

An important consequence of \Cref{lem:pf-expand} is the following.

\begin{lemma}\label{lem:characterize-nonsingular-principal-submatrix}
    For $y \in \zho^m$, $\rho_A(y)$ is equal to four times the maximum cardinality of $z \in \zho^m$ such that $z \le y$ and $B(z)$ is of column-full rank, where $B(z)$ is defined by~\eqref{def:B-y}.
\end{lemma}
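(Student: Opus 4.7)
The plan is to combine the Pfaffian expansion in \cref{lem:pf-expand} with a monomial-support argument. Since $\blowup{A}{2}(y)$ is skew-symmetric, $\rho_A(y) = \max\{|I| : \pf \blowup{A}{2}(y)[I] \ne 0\}$, and \cref{lem:pf-expand} writes this Pfaffian as a sum over all pairs $(z, J)$ with $z \in \zho^m$, $z \le y$, $|z| = |I|/4$, and $J \in \mathcal{J}^y(z)$ of determinants $\det B(y)[I, J]$.

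The main step is to rule out cancellation, so that the Pfaffian is non-zero iff at least one summand is non-zero as a polynomial. The strategy is to track which entries of each $U_i$ occur in each summand $\det B(y)[I, J]$: the block $(U_i \otimes B_i)[*, J_i]$ uses entries only from the first column of $U_i$ when $J_i = \{1, 2\}$, only from the second column when $J_i = \{3, 4\}$, from both columns when $J_i = \{1, 2, 3, 4\}$, and from neither when $J_i = \emptyset$. Reading off the resulting multi-degree of the determinant in the $U_i$-entries, one sees that distinct pairs $(z, J)$ and $(z', J')$ either yield different $|J_i|$ at some $i$ (when $z \ne z'$), or agree on $z$ but disagree in a column choice at some $i$ with $y_i = 1$, $z_i = \frac12$ (when $z = z'$ but $J \ne J'$). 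In either case, the two summands lie in disjoint monomial subspaces of $\K[U_1, \dotsc, U_m]$, so no cancellation can occur.

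It remains to interpret non-vanishing of a single summand $\det B(y)[I, J]$. After relabeling entries of the $U_i$'s, each selected block $(U_i \otimes B_i)[*, J_i]$ matches a block of $B(z)$: it is $u \otimes B_i$ for a single column $u$ of $U_i$ when $z_i = \frac12$, and the full $U_i \otimes B_i$ when $z_i = 1$. Hence $\det B(y)[I, J] \ne 0$ as a polynomial iff $\det B(z)[I, *] \ne 0$. Since $B(z)$ has exactly $4|z| = |I|$ columns, the existence of such $I$ is equivalent to $B(z)$ having column-full rank. Assembling the above,
\[
    \rho_A(y) = 4 \max\{|z| : z \in \zho^m,\ z \le y,\ B(z)\text{ has column-full rank}\}.
\]

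The main obstacle I anticipate is the monomial-support bookkeeping needed for the no-cancellation step. The required disjointness reduces to two combinatorial observations—that the four admissible shapes $\emptyset$, $\{1, 2\}$, $\{3, 4\}$, $\{1, 2, 3, 4\}$ of each $J_i$ draw from disjoint column pairs of $U_i$, and that any two distinct $(z, J)$ pairs must disagree at some such index—but both must be traced carefully through the definitions of $\blowup{A}{2}(y)$, $B(y)$, and $\mathcal{J}^y(z)$.
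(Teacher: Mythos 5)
Your proof is correct and follows essentially the same route as the paper: expand the Pfaffian of each principal submatrix via \cref{lem:pf-expand}, identify each summand with $\det B(z)[I]$ up to relabeling of indeterminates, and conclude that nonsingularity of $\blowup{A}{2}(y)[I]$ is equivalent to nonsingularity of some $B(z)[I]$ with $z \le y$. Your multidegree bookkeeping (tracking which column of each $U_i$ a given $J_i$ draws from, so that distinct pairs $(z,J)$ produce monomially disjoint summands) is a valid and more explicit justification of the no-cancellation step, which the paper only asserts.
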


\begin{proof}
    Since $\blowup{A}{2}(y)$ is skew-symmetric, $\rho_A(y) = \rank \blowup{A}{2}(y)$ is equal to the maximum cardinality of $I \subseteq [2n]$ such that $\blowup{A}{\modify{2}}(Y)[I]$ is nonsingular.
    Fix $I \subseteq [2n]$.
    By \Cref{lem:pf-expand}, $\pf \blowup{A}{2}(Y)[I]$ is expanded as~\eqref{eq:pf-expand}.
    Now the summand~\eqref{eq:c_z-summand} of~\eqref{eq:pf-expand} for $z$ is the same polynomial as $\det B(z)[I]$ up to the indeterminates' labeling.
    Thus, if $B(z)[I]$ is singular for any $z \in \zho^m$ with $z \le y$ and $|z| = \frac{|I|}{4}$, the principal submatrix $\blowup{A}{2}(y)[I]$ must be singular.
    Conversely, if there exists $z \in \zho^m$ such that $z \le y$, $|z| = \frac{|I|}{4}$, and $B(z)[I]$ is nonsingular, the principal submatrix $\blowup{A}{2}(y)[I]$ becomes nonsingular because different $z$ and $(J_1, \dotsc, J_m) \in \mathcal{J}^y(z)$ yield summands~\eqref{eq:c_z-summand} that do not cancel out.

    To summarize, $\blowup{A}{2}(y)[I]$ is nonsingular if and only if there exists $z \in \zho^m$ with $z \le y$ such that $B(z)[I]$ is nonsingular.
    Finding a maximum $I$ satisfying these conditions, we obtain the claim.
\end{proof}

For later use, we provide a different expansion formula of $\pf\blowup{A}{2}(y)$.

\begin{lemma}\label{lem:naive-expansion}
    For $y \in \zho^m$ and $I \subseteq [2n]$, it holds
    \begin{align}
        \pf \blowup{A}{2}(y)[I] = \sum_{\substack{z \in \zho^m:\\ |z| = \frac{|I|}{4},\, z \le y}}
        \sum_{(S_1, \dots, S_m) \in \mathcal{S}(z)} \lambda_{S_1, \dotsc, S_m} \prod_{i=1}^m \pf(Y_i \otimes \Delta)[S_i],
    \end{align}
    where $\mathcal{S}(z)$ is the family of $m$-tuples $(S_1, \dots, S_m)$ such that
    \begin{gather}
        S_i =
        \begin{cases}
            \{1,2,3,4\} & (z_i = 1), \\
            \{1,2\}, \{1,4\}, \{2,3\}, \text{or } \{3,4\} & \bigl(z_i = \frac12\bigr), \\
            \emptyset & (z_i = 0).
        \end{cases}\label{eq:non-vanishing-sets}
    \shortintertext{and}
        \lambda_{S_1, \dotsc, S_m} = \det \begin{bNiceMatrix}
            (I_2 \otimes B_1)[I, S_1] & \Cdots & (I_2 \otimes B_m)[I, S_m]
        \end{bNiceMatrix}\label{def:lambda}
    \end{gather}
\end{lemma}
\begin{proof}
    Using a different factorization
    \begin{align}\label{eq:naive-factor}
        Y_i \otimes (a_i \wedge b_i)
        = \bigl(I_{2} {Y_i} I_{2}\bigr) \otimes \bigl(B_i \Delta B_i^\top\bigr)
        = (I_2 \otimes B_i) (Y_i \otimes \Delta) {(I_2 \otimes B_i)}^\top,
    \end{align}
    we obtain $\blowup{A}{2}(y) = B E(y) B^\top$, where
    \begin{align}
        B = \begin{bNiceMatrix}
            I_2 \otimes B_1 & \Cdots & I_2 \otimes B_m
        \end{bNiceMatrix},
        \quad
        E(y) = \begin{bNiceMatrix}
            Y_1 \otimes \Delta & & \\
            & \Ddots & \\
            & & Y_m \otimes \Delta
        \end{bNiceMatrix}.
    \end{align}
    Applying \Cref{prop:ishikawa-wakayama}, we obtain
    \begin{align}
        \blowup{A}{2}(y)[I] = \sum_{(S_1, \dots, S_m): \sum_{i=1}^m |S_i| = |I|} \lambda_{S_1, \dotsc, S_m} \prod_{i=1}^m \pf(Y_i \otimes \Delta)[S_i].
    \end{align}
    Let $z_i = \frac{|S_i|}{4}$ for each $i$.
    The term corresponding to $(S_1, \dots, S_m)$ vanishes if any of $S_i$ is odd because $Y_i \otimes \Delta$ is skew-symmetric.
    Hence, $z$ is half-integral for non-vanishing terms.
    If $z_i > y_i$, which implies $|S_i| > 4y_i$, then $\pf (Y_i \otimes \Delta)[S_i] = 0$ because the rank of $Y_i \otimes \Delta$ is $4y_i$.
    So $z \leq y$.
    Finally, it is easy to check that $\pf (Y_i \otimes \Delta)[S_i] = 0$ if $S_i$ does not satisfy~\eqref{eq:non-vanishing-sets}.
\end{proof}

\subsection{Full-column Rankness and Half-integral Fractional Matroid Matchings}

\Cref{lem:characterize-nonsingular-principal-submatrix} characterizes the $\rho_A(y)$ value in terms of the \modify{full-}column rankness of $B(y)$.
We next relate $B(y)$ and half-integral points in $P$.

\begin{lemma}\label{lem:characterize-half-integral-matching}
    A half-integral vector $y \in \zho^m$ is in $P$ if $B(y)$ is of full-column rank.
\end{lemma}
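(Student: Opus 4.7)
The plan is to translate the full column-rank condition on $B(y)$ into a geometric statement about subspaces of the tensor product space $\K^2 \otimes \K^n$, and then to match this directly with the defining inequalities of the fractional matroid parity polytope $P$.

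First, I would observe that the $i$th block $U_i \otimes B_i$ of $B(y)$ has column space equal to $V_i \otimes \ell_i$, where $V_i \le \K^2$ denotes the column space of $U_i$ (taken over the rational function field in the entries of the $U_j$, extended algebraically if convenient). Since the entries of $U_i$ are distinct indeterminates, $\dim V_i = 2y_i$ and hence $\dim(V_i \otimes \ell_i) = 4y_i$. The hypothesis that $B(y)$ has full column rank $4|y| = \sum_i 4y_i$ therefore forces the sum $\sum_i V_i \otimes \ell_i$ to be direct, since each summand already attains its maximum possible dimension and the overall span attains the sum of these dimensions.

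Next, I would fix an arbitrary subspace $S \le \K^n$ and analyze how this direct sum interacts with $\K^2 \otimes S$. The key identity is
\[
    (V_i \otimes \ell_i) \cap (\K^2 \otimes S) = V_i \otimes (S \cap \ell_i),
\]
which one verifies by choosing a basis of $\ell_i$ that extends a basis of $S \cap \ell_i$ and then tensoring with a basis of $V_i$. Because the spaces $V_i \otimes \ell_i$ lie in a direct sum, so do their subspaces $V_i \otimes (S \cap \ell_i)$, and their total is contained in $\K^2 \otimes S$. Taking dimensions now yields
\[
    \sum_{i=1}^{m} 2 y_i \dim(S \cap \ell_i) \;=\; \dim \bigoplus_{i=1}^{m} V_i \otimes (S \cap \ell_i) \;\leq\; \dim (\K^2 \otimes S) \;=\; 2 \dim S,
\]
which, after dividing by $2$, is exactly the defining inequality~\eqref{eq:fractional} of $P$ evaluated at $S$. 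Combined with the trivial fact $y \ge \bfzero$ for $y \in \zho^m$, this shows $y \in P$.

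The main obstacle I anticipate is the bookkeeping around base change: the subspaces $V_i$ a priori live over a rational function field rather than over $\K$, while the constraints defining $P$ are phrased for subspaces $S \le \K^n$. This should not be a genuine difficulty, since dimensions of $\K$-subspaces and intersections such as $S \cap \ell_i$ are preserved under extension of scalars, so an inequality derived over the extension descends to the same inequality over $\K$; but I would want to state the tensor identity carefully so that the proof reads cleanly without ambiguity about over which field the various objects are being considered.
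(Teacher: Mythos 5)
Your proposal is correct and follows essentially the same route as the paper's proof: identify the column space of each block $U_i \otimes B_i$ as a tensor product $\im U_i \otimes \hat\ell_i$ of dimension $4y_i$, use full column rank to conclude these spaces (and hence their subspaces) sum directly, intersect with $\F^2 \otimes \hat S$ via the identity $(\im U_i \otimes \hat\ell_i) \cap (\F^2 \otimes \hat S) = \im U_i \otimes (\hat S \cap \hat\ell_i)$, and compare dimensions with $2\dim S$. The scalar-extension bookkeeping you flag is exactly how the paper handles it, working over $\F = \K(U)$ and noting that $\dim(S \cap \ell_i)$ is preserved under extension of scalars.
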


\begin{proof}
    \modify{Suppose to the contrary that there is a subspace $S \le \K^n$ such that $\sum_{i=1}^m d_i y_i > s$, where $d_i = \dim (S \cap \ell_i)$ and $s = \dim S$.
    By appropriate change of basis, we can assume $S = \agbr{e_1, \dotsc, e_s}$.
    Similarly, by changing a basis of $\ell_i$, we can assume that $S \cap \ell_i$ is spanned by the left $d_i$ columns of $B_i$ for $i \in [m]$.
    Then, $B_i$ has the form}
    \begin{align}\modify{
        B_i = \begin{bNiceMatrix}[first-row, first-col]
             & d_i & 2 - d_i \\
        s    & * & * \\
        n-s  & O & *
        \end{bNiceMatrix}.
    }\end{align}
    \modify{Consider a matrix $B' = \begin{bNiceMatrix} U_1 \otimes B_1[*, [d_1]] & \Cdots & U_m \otimes B_m[*, [d_m]] \end{bNiceMatrix}$, which is a submatrix of $B(y)$ obtained by extracting columns of $B(y)$ corresponding to left $d_i$ columns of $B_i$ for each $i \in [m]$.
    Then, $B'$ has $2s$ nonzero rows and $2 \sum_{i=1}^m d_i y_i$ columns, meaning that $B'$ is not of full-column rank.
    Hence, $B(y)$ is not of full-column rank as well.}
\end{proof}

The converse of \Cref{lem:characterize-half-integral-matching} holds for extreme points in $P$.

\begin{lemma}\label{lem:extreme-converse}
    The matrix $B(y)$ with $y \in \zho^m$ is of full-column rank if $y$ is an extreme point of $P$.
\end{lemma}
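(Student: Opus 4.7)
The plan is to invoke the canonical family characterization of extreme points of $P$ from \citet{Chang2001b} (quoted just before this lemma), reduce the full-column-rank question to a dimension count in a quotient space, and then exhibit a nonsingular $4|y| \times 4|y|$ minor of $B(y)$ by exploiting the chain and graph structure of the canonical family. By that characterization, extremity of $y$ yields a matroid matching $L_1(y)$ together with a canonical family $\mathcal{S} = \{S_1, \dotsc, S_t\}$ ($t = |\supp_{\frac{1}{2}}(y)|$) and an associated permutation $i_1, \dotsc, i_t$ of $\supp_{\frac{1}{2}}(y)$ such that $G(\mathcal{S}, L_{\frac{1}{2}}(y))$ is a disjoint union of odd cycles. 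Over the rational function field $\F = \K(U)$ in the entries of all $U_i$'s, the column span $W$ of $B(y)$ in $\F^2 \otimes \K^n$ equals $\F^2 \otimes S_0 + \sum_{j=1}^t u_{i_j} \otimes \ell_{i_j}$, where $S_0 = \sum_{\ell \in L_1(y)} \ell$ and $u_{i_j}$ denotes the single column of $U_{i_j}$. Since $L_1(y)$ is a matroid matching, $\dim S_0 = 2|L_1(y)|$, so $\F^2 \otimes S_0$ has $\F$-dimension $4|L_1(y)|$; it therefore suffices to show that the $t$ summands $u_{i_j} \otimes \ell_{i_j}$ contribute $2t$ further independent dimensions.

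Next I will reduce to the quotient $\K^n / S_0$. Applying the defining constraint of $P$ at $S = S_0$, the $L_1(y)$ part alone contributes $2|L_1(y)| = \dim S_0$, forcing $\sum_{j=1}^t \dim(S_0 \cap \ell_{i_j}) \le 0$ and hence $\ell_{i_j} \cap S_0 = \{0\}$ for each $j$. Thus each image $\bar \ell_{i_j}$ in $\K^n / S_0$ is $2$-dimensional, and I analyze $\sum_j u_{i_j} \otimes \bar \ell_{i_j}$ in $\F^2 \otimes (\K^n / S_0)$ via the filtration by the quotient chain $\bar S_{j'} = S_{j'}/S_0$, whose graded pieces $\bar S_{j'} / \bar S_{j'-1}$ have dimension $1$ or $2$. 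The odd-cycle structure of $G(\mathcal{S}, L_{\frac{1}{2}}(y))$ forces each chain position $j'$ to have degree $2$, giving two cases: either $\ell_{i_{j'}}$ is a loop at $j'$, with $\bar S_{j'} / \bar S_{j'-1}$ being $2$-dimensional and equal to the image of $\bar \ell_{i_{j'}}$; or $j'$ is incident to two non-loop edges, namely the introducing edge $\ell_{i_{j'}}$ itself and a unique later edge $\ell_{i_j}$ ($j > j'$) whose first intersection with the chain is at $j'$, in which case $\bar S_{j'} / \bar S_{j'-1}$ is $1$-dimensional and the new-direction contributions from both $\ell_{i_{j'}}$ and $\ell_{i_j}$ to this piece are nonzero and proportional.

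Choosing a basis of $\bar S_t$ adapted to this filtration---a single $\bar w_{j'}$ in the common new direction at a non-loop $j'$, and $\bar w_{j',1}, \bar w_{j',2}$ at a loop $j'$---the projection of $\sum_j u_{i_j} \otimes \bar \ell_{i_j}$ to the $j'$-th graded piece forms, up to nonzero scalars, the $2 \times 2$ block $\begin{bmatrix} u_{i_{j'}} & u_{i_j} \end{bmatrix}$ at a non-loop $j'$ (with $\ell_{i_j}$ the later edge at $j'$), or a $2 \times 2$ nonsingular submatrix of $u_{i_{j'}} \otimes I_2$ such as $\operatorname{diag}((u_{i_{j'}})_1, (u_{i_{j'}})_2)$ at a loop $j'$. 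Each such block has a nonzero determinant over $\F$ by algebraic independence of the entries of the $U_i$'s, and selecting two rows per chain position assembles a $2t \times 2t$ block-diagonal minor whose determinant equals the nonzero product of block determinants. This proves $\sum_j u_{i_j} \otimes \bar \ell_{i_j}$ has $\F$-dimension exactly $2t$, so $\dim W = 4|L_1(y)| + 2t = 4|y|$ and $B(y)$ has full column rank. The main obstacle is the combinatorial step above: identifying the unique later edge at each non-loop chain position and verifying that its first-intersection direction coincides with the new-direction contribution of the introducing edge up to scalar, which is what makes the minor block-diagonal and reduces its determinant to a product of $2 \times 2$ minors that are manifestly nonzero.
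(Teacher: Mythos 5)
Your overall strategy is the same as the paper's: use the canonical-family characterization of extremity, split off the integral lines (whose blocks $U_i\otimes B_i$ span $\F^2\otimes_\K S_0$, of dimension $4|L_1(y)|$ since $L_1(y)$ is a matroid matching), observe $\ell_{i_j}\cap S_0=\{\bfzero\}$ from feasibility at $S_0$, and then produce a nonsingular $2t\times 2t$ minor for the half lines by filtering along the chain and extracting one generically nonsingular $2\times 2$ block per chain level. Those reductions are all sound, and the final mechanism (block-triangularization with diagonal blocks $\begin{bmatrix}u_i & u_{i'}\end{bmatrix}$ built from algebraically independent entries) is exactly the paper's.

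The gap is in the combinatorial step that you yourself flag as the main obstacle: it is \emph{not} true that each chain position $j'$ is incident precisely to ``the introducing edge $\ell_{i_{j'}}$'' and ``a unique later edge whose first intersection with the chain is at $j'$.'' Consider the triangle instance $\ell_{12}=\agbr{e_1,e_2}$, $\ell_{13}=\agbr{e_1,e_3}$, $\ell_{23}=\agbr{e_2,e_3}$ in $\K^3$ with $y=\bigl(\frac12,\frac12,\frac12\bigr)$ and chain $S_1=\agbr{e_1}<S_2=\agbr{e_1,e_2}<S_3=\K^3$. Here $S_1$ is the \emph{first}-intersection level of two edges ($\ell_{12}$ and $\ell_{13}$), while $S_3$ is the \emph{completion} level of two edges ($\ell_{13}$ and $\ell_{23}$) and is the first-intersection level of none; there is no ``later edge'' at $S_3$. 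Under your column-to-level assignment, $\ell_{13}$ would contribute a column to only one level and the block at $S_3$ would be left with a single column, so the $2t\times 2t$ minor does not assemble. The property one actually has (and all one needs, and what the paper uses) is that the graph is $2$-regular and loopless: each half line meets the chain at two \emph{distinct} levels $j_i<j_i'$, and each level carries exactly two incidences coming from two \emph{distinct} lines, in whatever combination of ``first'' and ``second'' incidences the odd cycles dictate. Choosing a basis $\{a_i,b_i\}$ of $\ell_i$ adapted to its two levels and assigning each of the two columns of $U_i\otimes B_i$ to its own level then gives a block-\emph{triangular} (not block-diagonal; the column at the upper level generally has components at lower levels) matrix with the desired $2\times2$ diagonal blocks. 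Two smaller points: your ``loop'' case is vacuous, since tightness of $S_t$ together with $\dim S_0=2|L_1(y)|$ and $\ell_{i_j}\cap S_0=\{\bfzero\}$ forces every graded piece $\bar S_{j'}/\bar S_{j'-1}$ to be one-dimensional and hence the graph to be loopless; and in the non-loop case the phrase ``nonzero and proportional'' is automatic in a one-dimensional graded piece, so it carries no content beyond incidence itself.
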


\begin{proof}
    Let $s = \big|L_1(y)\big|$, $t = \big|L_{\frac12}(y)\big|$, $S_0 = \sum_{\ell \in L_1(y)} \ell$, and $\mathcal{S} = \{S_1, \dotsc, S_t\}$ ($S_1 < \dotsb < S_t$) be a canonical family with respect to $y$ (see \Cref{sec:fractional-linear-matroid-parity}).
    By change of basis, we can assume that $S_j = \langle e_1, \dotsc, e_{2s+j} \rangle$ for $j = 0, \dotsc, t$.
    Each edge $\ell_i \in L_{\frac12}(y)$ in the graph $G\bigl(\mathcal{S}, L_{\frac12}(y)\bigr)$ has ends $\{S_{j_i}, S_{j'_i}\}$ with $j_i, j'_i \in [t]$ satisfying
    \begin{align}
        \dim (S_{j_i} \cap \ell_i) &= \dim (S_{j_i-1} \cap \ell_i) + 1, \\
        \dim (S_{j'_i} \cap \ell_i) &= \dim (S_{j'_i-1} \cap \ell_i) + 1.
    \end{align}
    These equalities imply that $j_i \ne j'_i$, i.e., $G\bigl(\mathcal{S}, L_{\frac12}(y)\bigr)$ has no self-loop.
    In addition, by appropriately taking a basis $\{a_i, b_i\}$ of $\ell_i$, we can transform the matrix $B_i$ as the form of
    \[
        B_i = \begin{bNiceMatrix}[last-col=3]
            * & * & \\
            1 & * & \text{$j_i$th} \\
            0 & * & \\
            0 & 1 & \text{$j'_i$th} \\
            0 & 0 &
        \end{bNiceMatrix}.
    \]
    Conversely, for each $j \in [t]$, there exists exactly two edges $\ell_{i_j}, \ell_{i'_j}$ incident to the node $S_j$.
    Note that $i_j$ and $i'_j$ are distinct as $G\bigl(\mathcal{S}, L_{\frac12}(y)\bigr)$ has no self-loop.

    \modify{
        Based on the above arguments, we permute the columns of $B(y)$ as follows.
        The $4s$ columns corresponding to the lines in $L_1(y)$ are placed at the leftmost.
        For each $j \in [t]$, the $(4s + 2j - 1)$st and $(4s + 2j)$th columns consist of one column each of $U_{i_j} \otimes B_{i_j}$ and $U_{i'_j} \otimes B_{i'_j}$.
        This column permutation yields the following block upper-triangular form:
    }
    \begin{align}
        \begin{bNiceMatrix}
            T & *       & *        & *       & *        & \Cdots & *       & *        \\
              & U_{i_1} & U_{i'_1} & *       & *        & \Cdots & *       & *        \\
              &         &          & U_{i_2} & U_{i'_2} & \Ddots & \Vdots  & \Vdots   \\
              &         &          &         &          & \Ddots & *       & *        \\
              &         &          &         &          &        & U_{i_t} & U_{i'_t} \\
              &         &          &         &          &        &         &
        \end{bNiceMatrix},
    \end{align}
    where $T$ is a matrix of order $\modify{4}s$.
    \modify{The matrix $T$ is nonsingular because the lines in $L_1(y)$ are linearly independent.}
    \modify{In addition,} each diagonal block $\begin{bmatrix} U_{i_j} & U_{i'_j} \end{bmatrix}$ is nonsingular \modify{since $U_{i_j}$ and $U_{i'_j}$ are matrices of different indeterminates}.
    \modify{Therefore}, $B(y)$ is of full-column rank.
\end{proof}

It is unknown whether $B(y)$ is of full-column rank even for a half-integral but non-extreme $y \in P$.
Nevertheless, our weak characterization is enough to show the validity of \Cref{alg:simple}.

\subsection{Proof of \texorpdfstring{\Cref{thm:simple-algorithm-is-valid}}{Theorem \ref{thm:simple-algorithm-is-valid}}}

\Cref{lem:characterize-half-integral-matching,lem:extreme-converse,lem:characterize-nonsingular-principal-submatrix} are aggregated into the following lemma.

\begin{lemma}\label{lem:blowup-rank}
    For $y \in \zho^m$, it holds that
    \begin{align}\label{eq:blowup-rank}
        \modify{4\max \{ |z| : z \in P_{\mathrm{ext}}, z \le y \} \le {}}
        \rho_A(y) \le 4 \max \{ |z| : z \in P, z \le y \},
    \end{align}
    where $P_\mathrm{ext}$ is the set of extreme points of $P$.
\end{lemma}

\begin{proof}
    \modify{For an extremet point $z \in P_\mathrm{ext}$, the matrix $B(z)$ is of full column rank by \Cref{lem:extreme-converse}, meaning $4|z| \le \rho_A(y)$ by \Cref{lem:characterize-nonsingular-principal-submatrix}. Thus, the first inequality holds.}
    Next, by \Cref{lem:characterize-nonsingular-principal-submatrix} again, $\rho_A(y)$ is equal to four times the maximum cardinality of $z \in \zho^m$ such that $z \le y$ and $B(z)$ is of column-full rank.
    Since such $z$ is in $P$ by \Cref{lem:characterize-half-integral-matching}, \modify{the second inequality also holds}.
\end{proof}

\modify{\Cref{lem:blowup-rank} implies that $\rho_A(y) = 4 \max \{ |z| : z \in P, z \le y \}$ holds if the maximum $z$ is attained by an extreme point of $P$.}
Combining \Cref{thm:main} and \Cref{lem:blowup-rank} with $y = \ones$, we obtain the following theorem, which states that the second-order blow-up is sufficient for $A$ to attain the nc-rank.

\begin{theorem}\label{thm:blowup-size-2}
    Let $P$ be a fractional matroid parity polytope and $A$ the corresponding matrix representation~\eqref{eq:parity-matrix}.
    Then, we have
    \begin{align}\label{eq:blowup-size-2}
        \max_{y \in P} |y| = \frac{1}{4} \rank \blowup{A}{2}.
    \end{align}
\end{theorem}

\begin{proof}\modify{
    We have $4 \max_{y \in P} |y| = \rho_A(\ones)$ from \Cref{lem:blowup-rank} and the fact that a linear function is maximized at an extreme point of a polytope, meaning~\eqref{eq:blowup-size-2}.}
\end{proof}

Now we are ready to prove \Cref{thm:simple-algorithm-is-valid}.

\begin{proof}[{Proof of \Cref{thm:simple-algorithm-is-valid}}]
    First, we show the validity of the algorithm assuming that the exact value of $\rho_A$ can be computed.
    Let $y^*$ be the lexicographically minimum point among all maximum points in $P$.
    Note that $y^*$ is half-integral since $y^*$ is extreme.
    Let $y^{(0)} = \ones$ and $y^{(i)}$ denote $y$ in \Cref{alg:simple} at the end of the $i$th iteration for $i \in [m]$.
    We show by induction on $i$ the following claim: for every $i = 0, \dotsc, m$, it holds that $y^{(i)}_j = y^*_j$ if $j \le i$ and $y^{(i)}_j = 1$ if $j > i$.
    Then, \Cref{thm:simple-algorithm-is-valid} is obtained as the case for $i = m$.

    The claim for $i = 0$ is trivial.
    Suppose that the claim is true for $i - 1$ and consider the case for $i$.
    Let $y$ be the candidate solution given to $\rho_A$ in \Cref{lst:simple-alg-if1} of \Cref{alg:simple}.
    Suppose the case when $y^*_i = 1$.
    Then $y$ is lexicographically smaller than $y^*$ by the inductive assumption.
    This means that that there is no maximum point $z \in P$ satisfying $z \le y$.
    By \Cref{lem:blowup-rank}, we have
    \[
        \rho_A(y)
        \le 4 \max\{ |z| : z \in P, z \le y \}
        < 4 \max\{ |z| : z \in P \}
        = \rho_A(\ones)
    \]
    and thus the $i$th iteration in \Cref{alg:simple} does not execute Lines~\ref{lst:simple-alg-if-inner-begin}--\ref{lst:simple-alg-if-inner-end} and $y_i^{(i)}$ is fixed to $1$.
    Suppose the case when $y^*_i \le \frac{1}{2}$.
    Then $y^* \le y$ by the inductive assumption.
    We have
    \[
        \max \{|z| : z \in P\}
        = |y^*|
        \le \max\{|z| : z \in P, z \le y\}
        \le \max \{|z| : z \in P\},
    \]
    which means that $y^*$ attains the maximum in $\max\{|z| : z \in P, z \le y\}$.
    Since $y^*$ is extreme, by \Cref{lem:blowup-rank}, we obtain
    \[
        \rho_A(y) = 4 \max\{|z| : z \in P, z \le y\} = 4|y^*| = \rho_A(\ones).
    \]
    Thus the $i$th iteration goes to \Cref{lst:simple-alg-if-inner-begin}.
    The same argument can be applied to the conditional branch in \Cref{lst:simple-alg-if2}.
    This completes the proof of validity assuming that all the evaluations of $\rho_A$ is exact.

    Next, we analyze the probability of success when we estimate $\rho_A(y)$ by substituting uniform random elements from $R$ to the entries of $U_i$.
    Since each entry of $\blowup{A}{2}(y) = \sum_{i=1}^m U_i U_i^\top \otimes (a_i \wedge b_i)$ is quadratic, the degree of any non-vanishing $k \times k$ minor is $2k$ for any $k \leq 2n$.
    Therefore, the probability that such a non-vanishing minor remains non-vanishing after the random substitution from $R$ is at least $1 - \frac{4n}{|R|}$ by the Schwartz-Zippel lemma.
    Since there are at most $2m$ evaluations of $\rho_A$, it suffices to take $|R| = 16mn$ to guarantee that all the evaluations of $\rho_A$ during the algorithm are correct with probability at least $\frac12$.
    Thus, the output of the algorithm is correct with probability at least $\frac12$.

    Finally, we analyze the time complexity.
    Each evaluation of $\rho_A$ can be done in $O(n^2)$ time using low-rank updates.
    Let $y$ be a tentative solution, $y' = y - \frac12 e_i$ be a candidate solution in the algorithm, and $U_i$ be a random matrix of size $2 \times 2y_i$ for $i \in [m]$.
    Suppose that we already have a rank-revealing decomposition $PQ$ of $\blowup{A}{2}(y)$.
    Then, to evaluate $\rho_A(y')$, it suffices to compute a rank-revealing decomposition of $M' = PQ + D_i \otimes (a_i \wedge b_i)$, where
    \begin{align}
        D_i = \begin{cases}
            -U_i[*, \{1\}] {U_i[*, \{1\}]}^\top & \bigl(y_i = 1, y_i' = \frac12\bigr), \\
            -U_i U_i^\top & \bigl(y_i = \frac12, y_i' = 0\bigr).
        \end{cases}
    \end{align}
    Note that the rank of $D_i$ is one, so that of $D_i \otimes (a_i \wedge b_i)$ is two.
    Hence, we can use a low-rank update formula for a rank-revealing decomposition to compute the rank of $M'$ in $O(n^2)$ time.
    For the first evaluation of $\rho_A$ in \Cref{lst:simple-alg-init}, we simply compute a rank-revealing decomposition in $O(n^\omega)$ time.
    Thus, the time complexity of Algorithm~\ref{alg:simple} is $O(n^\omega + mn^2)$.
\end{proof}

\section{Finding Dominant 2-Cover}\label{sec:finding-dual}

In this section, we devise a randomized algorithm for finding the dominant 2-cover.
Our algorithm is based on the concept of the Wong sequence introduced by Ivanyos et al.~\cite{Ivanyos2015}.
In this section, we regard a linear matrix $A = \sum_{i=1}^m x_i A_i$ as a matrix space $\caA$ spanned by $A_i$, where $A_i = a_i \wedge b_i$ ($i=1,\dots,m$).
The nc-rank of $\caA$ (denoted by $\ncrank\caA$) is defined to be that of $A$.
For a subspace $U \leq \K^n$, we define $\caA(U) \coloneqq \agbr{Mu : M \in \caA, u \in U}$.
A subspace $U \leq \K^n$ is called a \emph{$c$-shrunk subspace} if $\dim U - \dim\caA(U) \geq c$.
In the following, we only consider $c = n - \ncrank \caA$\modify{, i.e., we only consider maximally shrunk subspaces}. 

\modify{There is a simple correspondence between $c$-shrunk subspaces, maximum vanishing subspaces, and minumum 2-covers, which follows immediately from \Cref{lem:2-cover-vs-MVSP}.
\begin{lemma}\label{lem:shrunk-vs-2-cover}
If $U$ is a $c$-shrunk subspace, then $(\overline{\caA(U)}, U)$ is a maximum vanishing subspace, and $(\caA(U), \overline{U})$ is a minimum 2-cover, where $\overline{\caA(U)}$ denotes the direct complement of $\caA(U)$.
Especially, if $U$ is the $c$-shrunk subspace with minimum dimension, then $(\caA(U), \overline{U})$ is the dominant 2-cover.
\end{lemma}
}

For $M \in \caA$, the (second) Wong sequence of $(M, \caA)$ is defined as
\begin{alignat}{3}
    W_0 &:= \{\bfzero\},  & \quad W_i &:= \caA(M^{-1}(W_{i-1}))  \quad (i=1, 2, \dots),
\end{alignat}
where $M^{-1}(W_{i-1})$ is the preimage of $W_{i-1}$ under $M$.
The following lemma states the necessary properties of the Wong sequence.

\begin{lemma}[{\cite{Ivanyos2015}}]\label{lem:Wong}
    For the Wong sequence $(W_i)$ of $(M, \caA)$, the following holds.
   \begin{enumerate}
       \item $W_i$ is nonincreasing, i.e., $W_{i-1} \leq W_i$ for $i=1, 2, \dots$. (Hence there exists the limit of $W_i$, which we denote by $W_\infty$)
       \item $\rank M = \ncrank \caA$ if and only if $W_\infty \leq \im M$.
       \item If $M \in \caA$ satisfies $\rank M = \ncrank \caA$, then $W_i = {(\caA M^+)}^i\ker \modify{M}$ for $i=0,1,2,\dots$, where $M^+$ is a pseudoinverse\footnote{A pseudoinverse of $M$ is a nonsingular matrix $M^+$ such that the restriction of $MM^+$ on $\im A$ is the identity map. A pseudoinverse can be found by Gaussian elimination.} of $M$.
       \item If $M \in \caA$ satisfies $\rank M = \ncrank \caA$, then $U^* := M^{-1}(W_\infty)$ is the minimum $c$-shrunk subspace.
   \end{enumerate}
\end{lemma}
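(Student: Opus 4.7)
The plan is to prove the four items in order, exploiting the inductive definition of the Wong sequence together with the MVSP characterization of nc-rank from \Cref{thm:MVSP}. Throughout, I will abbreviate $c = n - \ncrank \caA$ and use that a $c$-shrunk subspace $U$ yields a vanishing pair $(\overline{\caA(U)}, U)$ (in the sense of \Cref{thm:MVSP}) of corank summing to $c$, and vice versa.

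For Part (1), I would argue by induction on $i$: the base case $W_0 = \{\bfzero\} \le W_1$ is trivial. If $W_{i-2} \le W_{i-1}$, then $M^{-1}(W_{i-2}) \le M^{-1}(W_{i-1})$ because the preimage map is monotone, and applying the monotone operator $\caA(\cdot)$ to both sides gives $W_{i-1} \le W_i$. Since $\dim W_i \le n$, the ascending chain must stabilize, and we call the limit $W_\infty$.

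For Part (2), I would establish both directions via dimension counting. For the $(\Leftarrow)$ direction, assume $W_\infty \le \im M$ and set $U = M^{-1}(W_\infty)$. Because $W_\infty$ is a fixed point of the recursion, $\caA(U) = W_\infty$, so
\begin{align}
    \dim U - \dim \caA(U) = \dim \ker M + \dim W_\infty - \dim W_\infty = \dim \ker M = n - \rank M.
\end{align}
Thus $U$ is a $(n - \rank M)$-shrunk subspace, and its associated vanishing pair witnesses $\ncrank \caA \le \rank M$ via \Cref{thm:MVSP}. Combined with the trivial inequality $\rank M \le \ncrank \caA$, we get equality. For the $(\Rightarrow)$ direction, suppose $\rank M = \ncrank \caA$ but, for contradiction, there exists $w \in W_\infty \setminus \im M$. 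Take the least $i$ with $W_i \not\le \im M$ and write $w = A v$ with $A \in \caA$ and $Mv \in W_{i-1} \le \im M$. Tracking back along the Wong recursion, one obtains a chain of vectors witnessing that a generic linear combination of $M$ with matrices from $\caA$ has rank strictly greater than $\rank M$, contradicting the maximality of $\rank M$ among matrices in $\caA$. This rank-increasing construction is the technical crux and will be the main obstacle.

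For Part (3), I would use Part (2) to guarantee $W_i \le \im M$, so that $M^{-1}(W_i) = M^+(W_i) + \ker M$. Then
\begin{align}
    W_{i+1} = \caA\bigl(M^+(W_i) + \ker M\bigr) = \caA M^+(W_i) + \caA(\ker M),
\end{align}
and an induction consolidates successive applications into the closed form $W_i = (\caA M^+)^i \ker M$, after absorbing the $\caA(\ker M)$ term into the appropriate index. For Part (4), Part (2) gives $W_\infty \le \im M$ and the computation in the $(\Leftarrow)$ direction of Part (2) shows $U^* = M^{-1}(W_\infty)$ is $c$-shrunk. Minimality then reduces to showing that $U^* \le U'$ for any $c$-shrunk subspace $U'$: I would prove by induction that $W_i \le \caA(U')$ for all $i$ — the inductive step using that $U'$ being $c$-shrunk with $\rank M = n - c$ forces $M^{-1}(\caA(U'))$ to be contained in $U' + \ker M$, so that $\caA(M^{-1}(\caA(U'))) \le \caA(U')$. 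Passing to the limit gives $W_\infty \le \caA(U')$, and then $U^* = M^{-1}(W_\infty) \le M^{-1}(\caA(U')) = U'$, the last equality again by the $c$-shrunkness of $U'$ and the fact that $\dim \ker M = c$.
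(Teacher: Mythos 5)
First, a point of comparison: the paper offers no proof of \cref{lem:Wong} at all --- it is imported from \cite{Ivanyos2015} as a black box --- so there is no in-paper argument to measure your sketch against; I can only assess it on its own terms. Part (1) and the $(\Leftarrow)$ direction of Part (2) are correct as written. The genuine gap is the $(\Rightarrow)$ direction of Part (2), which you yourself flag as ``the technical crux'': you propose to derive a contradiction by turning a vector of $W_\infty$ escaping $\im M$ into a matrix of rank exceeding $\rank M$. That augmenting construction is a delicate, separate lemma in the Wong-sequence literature, and you give no indication of how to carry it out, so as it stands this direction is unproven.

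The good news is that you do not need it: the induction you set up for Part (4) already closes the gap. If $\rank M = \ncrank\caA = n - c$, then \Cref{thm:MVSP} supplies a $c$-shrunk subspace $U'$. Two facts you use only implicitly must be made explicit: (a) $\ker M \le U'$, because $\dim(U' \cap \ker M) = \dim U' - \dim MU' \ge \dim U' - \dim\caA(U') \ge c = \dim\ker M$; and (b) $MU' = \caA(U')$, by comparing $\dim MU' = \dim U' - c \ge \dim\caA(U')$ with the inclusion $MU' \subseteq \caA(U')$. Fact (a) is exactly what lets you absorb the $\caA(\ker M)$ term in the inductive step $W_i = \caA\bigl(M^{-1}(W_{i-1})\bigr) \subseteq \caA(U' + \ker M) = \caA(U')$ --- a step your write-up glosses over with ``so that'' --- and fact (b) then gives $W_\infty \le \caA(U') = MU' \le \im M$, which is precisely the missing direction of Part (2). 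With Part (2) secured, your Part (4) minimality argument is sound. Part (3) remains hand-wavy: ``absorbing the $\caA(\ker M)$ term into the appropriate index'' does not by itself yield the stated closed form (note $W_1 = \caA(\ker M)$, which differs from $\caA(M^+\ker M)$), so you should prove the accumulated recursion $W_{i+1} = \caA M^+ W_i + \caA(\ker M)$ and unfold it explicitly, observing along the way that the formula as printed in the paper (with $\ker A$ in place of $\ker M$) needs the same correction.
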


We now describe our algorithm.
The minimum blow-up size of $\caA$ is two by \Cref{thm:blowup-size-2}, so $\blowup{\caA}{2}$ contains a matrix $M$ such that $\rank M = 2\ncrank\caA$.
Such a maximum-rank matrix can be found with high probability by the Schwartz-Zippel lemma.
Then, we compute the limit of Wong sequence $W_\infty$ of $(M, \blowup{\caA}{2})$.
By \Cref{lem:Wong}, $U_* = M^{-1}W_\infty$ is the minimum $2c$-shrunk subspace of $\blowup{\caA}{2}$.
It is easy to compute the minimum $c$-shrunk subspace $U_0^*$ of $\caA$ from $U_*$ as shown below.
Once we obtain the minimum $c$-shrunk subspace $U_0^*$ of $\caA$, the dominant 2-cover $(S^*, T^*)$ is obtained by the correspondence between shrunk-subspaces and 2-covers shown in the previous section.
A pseudocode of the algorithm is shown in \Cref{alg:2-cover}.

\begin{algorithm}
    \caption{Randomized algorithm to find the dominant 2-cover}\label{alg:2-cover}
    \begin{algorithmic}[1]
        \Input rank-two skew-symmetric $A_i$ ($i \in [m]$) and a finite subset $R$ of $\K$.
        \Output the dominant 2-cover $(S^*, T^*)$.
        \State Draw random $2 \times 2$ matrices $X_i$ ($i \in [m]$), where each entry of $X_i$ is  independently drawn from $R$ uniformly at random.
        \State Compute $M = \sum_{i=1}^m A_i \otimes X_i \in \blowup{\caA}{2}$.
        \State Compute the limit of the Wong sequence $W_\infty$ of $(M, \blowup{\caA}{2})$.
        \If{$W_\infty \not\leq \im M$}
        \State \textbf{fail}.
        \Else
        \State Compute bases of $U^* = M^{-1}(W_\infty)$ and $U^*_0 = \{ u \in \R^n : u \otimes e_i \in U^* \text{ for $i \in [2]$} \}$ by Gaussian elimination.
        \State Let $S^*$ be the direct complement of $U_0^*$ and $T^* = \caA(U_0^*)$.
        \State \Return $(S^*, T^*)$.
        \EndIf
    \end{algorithmic}
\end{algorithm}

We have the following theorem.

\begin{theorem}\label{thm:2-cover}
    If $|R| \geq 4n$, \Cref{alg:2-cover} finds the dominant 2-cover in $O(mn^{\omega+1})$ time with probability at least $\frac12$.
\end{theorem}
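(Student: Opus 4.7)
The plan is to verify that \Cref{alg:2-cover} is correct with probability at least $\tfrac12$ and runs in $O(mn^{\omega+1})$ time. The key algebraic content is showing that the subspace $U_0^*$ extracted from the Wong sequence of the blow-up coincides with the unique minimum $c$-shrunk subspace of $\caA$, where $c = n - \ncrank \caA$; everything else is standard error analysis and linear-algebra bookkeeping.

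First I would show that the random matrix $M$ sampled in Line~2 attains $\rank M = \ncrank \blowup{\caA}{2}$ with probability at least $\tfrac12$. By \Cref{thm:blowup-size-2}, $\ncrank \blowup{\caA}{2} = 2 \ncrank \caA$, so there exists a specialization of the $X_i$'s that attains this rank. Every entry of $\blowup{\caA}{2}$ is linear in the entries of the $X_i$'s, so any nonvanishing $k \times k$ minor is a polynomial of total degree at most $k \le 2n$; by Schwartz--Zippel, a fixed witness minor stays nonzero with probability at least $1 - 2n/|R| \ge \tfrac12$ when $|R| \ge 4n$. Conditioned on this event, \Cref{lem:Wong}~(2) guarantees that the test $W_\infty \le \im M$ passes in Line~4, and \Cref{lem:Wong}~(4) guarantees that $U^* = M^{-1}(W_\infty)$ is the minimum $2c$-shrunk subspace of $\blowup{\caA}{2}$.

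The heart of the argument is to descend from $U^*$ to a minimum $c$-shrunk subspace of $\caA$ via $\GL_2(\K)$-symmetry. For every $P \in \GL_2(\K)$ one has $(\sum_i X_i \otimes A_i)(P \otimes I_n) = \sum_i (X_i P) \otimes A_i$, so right-multiplication by $P \otimes I_n$ preserves $\blowup{\caA}{2}$, and consequently $(P \otimes I_n) U^*$ is again $2c$-shrunk of the same dimension. Submodularity of the shrunk-defect function (inherited from the MVSP structure in \Cref{thm:MVSP}) makes the minimum $2c$-shrunk subspace unique, which forces $(P \otimes I_n) U^* = U^*$ for every $P \in \GL_2(\K)$. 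I would then deduce $U^* = \K^2 \otimes U_0^*$ with $U_0^*$ as defined in Line~7: using $P = \mathrm{diag}(1,\alpha)$ for some $\alpha \in \K \setminus \{0,1\}$ lets one split any $u = e_1 \otimes v_1 + e_2 \otimes v_2 \in U^*$ into its two summands, and the swap of $e_1$ and $e_2$ identifies the two slice spaces. (For small $\K$ I would pass to a scalar extension since shrunk dimensions are preserved.) From $\blowup{\caA}{2}(\K^2 \otimes U_0^*) = \K^2 \otimes \caA(U_0^*)$ one gets shrunk defect $2(\dim U_0^* - \dim \caA(U_0^*)) \ge 2c$, so $U_0^*$ is $c$-shrunk in $\caA$; and for any $c$-shrunk $U_0 \le \K^n$, the lift $\K^2 \otimes U_0$ is $2c$-shrunk in $\blowup{\caA}{2}$, so minimality of $U^*$ forces $\dim U_0^* \le \dim U_0$. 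The correspondence between shrunk subspaces and 2-covers established in \Cref{sec:ncrank-fractional} then converts $U_0^*$ into the dominant 2-cover $(S^*, T^*)$.

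For the time complexity, $M^+$ is computed once in $O(n^\omega)$ time. The Wong sequence terminates after $O(n)$ iterations, since $\dim W_i$ strictly increases until stabilization, and each iteration applies $M^+$ and the $4m$ generators $e_{ab} \otimes A_i$ of $\blowup{\caA}{2}$ to an $O(n)$-dimensional basis (exploiting that each $A_i$ has rank two) and then runs Gaussian elimination on the resulting $O(mn)$ vectors in $\K^{2n}$, costing $O(mn^\omega)$ per iteration. This sums to $O(mn^{\omega+1})$. The main obstacle will be the $\GL_2(\K)$-symmetry step establishing $U^* = \K^2 \otimes U_0^*$, since it requires both the uniqueness of the minimum shrunk subspace via submodularity and the ability to separate the two slices over the given ground field.
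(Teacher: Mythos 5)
Your proposal is correct and follows the paper's proof almost step for step: the Schwartz--Zippel bound for the max-rank $M$ (degree $\le 2n$, so $|R|\ge 4n$ suffices), \Cref{lem:Wong}~(2) for the verification test and (4) for extracting the minimum $2c$-shrunk subspace, the descent to a minimum $c$-shrunk subspace of $\caA$, the shrunk-subspace/2-cover correspondence, and the $O(n)$-step, $O(mn^\omega)$-per-step Wong-sequence analysis. The one place where your argument genuinely diverges is the key structural step $U^* = U_0^*\otimes\K^2$ (the paper's \cref{lem:shrunk-blowup}). The paper proves this for \emph{every} $2c$-shrunk subspace $U$ of $\blowup{\caA}{2}$, without invoking uniqueness: it uses the two-sided invariance $(I_n\otimes \mathrm{M}_2(\K))\caB = \caB(I_n\otimes \mathrm{M}_2(\K)) = \caB$ and a defect-counting argument showing that $U' = (I_n\otimes\mathrm{M}_2(\K))U$ satisfies $\caB U' = \caB U$ and hence $\dim U' = \dim U$, forcing $U' = U$; the tensor decomposition then follows from the algebra-invariance lemma. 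You instead exploit the $\GL_2(\K)$-symmetry $(P\otimes I_n)$ of the blow-up together with uniqueness of the minimum shrunk subspace to conclude $(P\otimes I_n)U^* = U^*$, and then split slices with a diagonal $P$. This is sound, but note two points: (i) your argument only applies to the \emph{minimum} shrunk subspace, whereas the paper's lemma is stated for arbitrary $2c$-shrunk subspaces (harmless here, since the algorithm only ever uses $U^*$); and (ii) the diagonal-matrix splitting needs $|\K|\ge 3$ — rather than a scalar extension, the cleaner fix is to observe that $\GL_2$-invariance already yields $\mathrm{M}_2$-invariance over any field, since $E_{pq}\otimes I_n = ((I_2+E_{pq})\otimes I_n) - (I_2\otimes I_n)$ with $I_2+E_{pq}\in\GL_2(\K)$ for $p\ne q$, which reduces you to exactly the invariance lemma the paper uses.
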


To prove the theorem, we need a few lemmas.
Let $\mathrm{M}_d(\K)$ denote the set of $d \times d$ matrices over $\K$.

\begin{lemma}[{cf.~\cite[Proposition~5.2]{Ivanyos2017}}]
    Let $d \in \N$.
    If a subspace $W \leq \K^{nd}$ satisfies $(I_n \otimes \mathrm{M}_d(\K)) W = W$, then
    $W = W_0 \otimes \K^d$, where $W_0 = \{w \in \K^n : u \otimes e_i \in W \text{ for all $i \in [d]$} \}$.
\end{lemma}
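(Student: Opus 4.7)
The plan is to identify $\K^{nd}$ with the tensor product $\K^n \otimes \K^d$ and to exploit the action of elementary matrices in $\mathrm{M}_d(\K)$. The inclusion $W_0 \otimes \K^d \leq W$ is immediate from the definition of $W_0$: every spanning element $w \otimes e_i$ with $w \in W_0$ lies in $W$ by construction, so any linear combination does.

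For the reverse inclusion, I would start with an arbitrary $x \in W$ and write it uniquely as $x = \sum_{k=1}^d w_k \otimes e_k$ for some $w_1,\dotsc,w_d \in \K^n$. The goal is to show each $w_k$ belongs to $W_0$, i.e., that $w_k \otimes e_j \in W$ for every $j \in [d]$. The key computation is that for the elementary matrix $E_{ji} = e_j e_i^\top \in \mathrm{M}_d(\K)$, the identity
\begin{align}
    (I_n \otimes E_{ji})(w \otimes e_k) = w \otimes (e_j e_i^\top e_k) = \delta_{ik} \, w \otimes e_j
\end{align}
holds on simple tensors. Applying this to $x$ yields $(I_n \otimes E_{ji}) x = w_i \otimes e_j$. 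Since $W$ is invariant under $I_n \otimes \mathrm{M}_d(\K)$, the left-hand side lies in $W$, so $w_i \otimes e_j \in W$ for every $i,j \in [d]$. Hence $w_i \in W_0$ for each $i$, and therefore $x \in W_0 \otimes \K^d$.

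There is no real obstacle here; the entire statement is an application of the projection-like property of elementary matrices under tensoring. The only point to be careful about is the orientation of the tensor product (which factor the matrix algebra acts on) and the corresponding indexing; once the identity for $I_n \otimes E_{ji}$ on simple tensors is written down correctly, the rest is immediate. I would present the proof essentially in the two short steps above.
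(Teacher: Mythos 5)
Your proposal is correct and follows essentially the same route as the paper's proof: the inclusion $W_0 \otimes \K^d \leq W$ by linearity on simple tensors, and the reverse inclusion by decomposing $x \in W$ along the $e_k$ and applying $I_n \otimes E_{ji}$ to extract each component $w_i \otimes e_j \in W$. No gaps.
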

\begin{proof}
    To see $W_0 \otimes \K^d \leq W$, take $w = w_0 \otimes v$ for $w_0 \in W_0$ and $v \in \K^d$ arbitrarily.
    Then, $w = w_0 \otimes \bigl(\sum_{i=1}^d v_i e_i\bigr) = \sum_{i=1}^d v_i (w_0 \otimes e_i) \in W$.
    On the other hand, take an \modify{arbitrary} $w \in W$ and write $w = \sum_{j=1}^{d} w_j \otimes e_j \in W$, where $w_j \in \K^d$ is the $j$th subvector of $w$.
    Then, we have $(I_n \otimes E_{ij}) w = w_j \otimes e_i \in W$ for $i, j \in [d]$ by $(I_n \otimes \mathrm{M}_d(\K)) W = W$.
    Therefore, $w_j \in W_0$ for $j \in [d]$, which means $w \in W_0 \otimes \K^d$.
    Hence $W_0 \otimes \R^d \geq W$.
\end{proof}

\begin{lemma}\label{lem:shrunk-blowup}
    Let $U$ be a $2c$-shrunk subspace of $\blowup{\caA}{2}$. Then, $U = U_0 \otimes \K^2$ and $U_0$ is an $c$-shrunk subspace for $\caA$.
    If $U$ is minimum, then $U_0$ is also minimum.
\end{lemma}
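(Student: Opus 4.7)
The plan is to prove that any $2c$-shrunk subspace $U$ is automatically invariant under the left action of $I_n \otimes \mathrm{M}_2(\K)$ and then invoke the preceding lemma. The key structural fact is that $\blowup{\caA}{2}$ is closed under both left and right multiplication by $I_n \otimes \mathrm{M}_2(\K)$, since for any $Y \in \mathrm{M}_2(\K)$,
\[
    (I_n \otimes Y)(A \otimes X) = A \otimes (YX),
    \qquad
    (A \otimes X)(I_n \otimes Y) = A \otimes (XY).
\]
In particular, for any $U \leq \K^{2n}$, the image $\blowup{\caA}{2}(U)$ is invariant under the left action of $I_n \otimes \mathrm{M}_2(\K)$.

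Set $U' = (I_n \otimes \mathrm{M}_2(\K)) U$; then $U \leq U'$ and the right-invariance of $\blowup{\caA}{2}$ gives $\blowup{\caA}{2}(U') = \blowup{\caA}{2}(U)$. Hence
\[
    \dim U' - \dim \blowup{\caA}{2}(U')
    \geq \dim U - \dim \blowup{\caA}{2}(U) \geq 2c.
\]
By \Cref{thm:blowup-size-2}, $\rank \blowup{\caA}{2} = 2\ncrank \caA$, so the maximum deficiency of any subspace of $\K^{2n}$ with respect to $\blowup{\caA}{2}$ is exactly $2c$. Therefore both sides above equal $2c$, forcing $\dim U' = \dim U$ and $U = U'$. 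Applying the preceding lemma yields $U = U_0 \otimes \K^2$ for $U_0 \leq \K^n$.

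Next I would evaluate $\blowup{\caA}{2}(U)$ on simple tensors: using $(A \otimes X)(u \otimes v) = (Au) \otimes (Xv)$ and varying $A \in \caA$, $u \in U_0$, $X \in \mathrm{M}_2(\K)$, $v \in \K^2$ gives $\blowup{\caA}{2}(U) = \caA(U_0) \otimes \K^2$. Therefore $\dim U - \dim \blowup{\caA}{2}(U) = 2(\dim U_0 - \dim \caA(U_0)) \geq 2c$, proving $U_0$ is $c$-shrunk. For minimality, if $V_0$ is any $c$-shrunk subspace of $\caA$, the same calculation shows $V_0 \otimes \K^2$ is $2c$-shrunk for $\blowup{\caA}{2}$; minimality of $U = U_0 \otimes \K^2$ then gives $U_0 \otimes \K^2 \leq V_0 \otimes \K^2$, equivalently $U_0 \leq V_0$, so $U_0$ is minimum. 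The main obstacle is the invariance step $U = U'$: it is not a priori true for an arbitrary shrunk subspace and relies crucially on the tightness $\rank \blowup{\caA}{2} = 2 \ncrank \caA$ of \Cref{thm:blowup-size-2}, without which enlarging $U$ to $U'$ could produce a strictly larger shrunk subspace and the tensor decomposition would fail.
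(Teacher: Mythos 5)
Your proof is correct and takes essentially the same approach as the paper: saturate $U$ to $U' = (I_n \otimes \mathrm{M}_2(\K))U$, use the right-invariance of $\blowup{\caA}{2}$ together with the fact that $2c$ is the largest possible deficiency (which the paper leaves implicit and you justify via \cref{thm:blowup-size-2}) to force $U' = U$, invoke the preceding lemma to get $U = U_0 \otimes \K^2$, and transfer deficiencies through $\blowup{\caA}{2}(U_0 \otimes \K^2) = \caA(U_0) \otimes \K^2$, also for the minimality claim. The only cosmetic difference is that you compute $\blowup{\caA}{2}(U)$ directly on simple tensors, whereas the paper first decomposes $W = \blowup{\caA}{2}(U)$ as $W_0 \otimes \K^2$ using the same preceding lemma.
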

\begin{proof}
    For notational simplicity, we denote $\blowup{\caA}{2}$ by $\caB$.
    First note that $\caB$ satisfies $(I_n \otimes \mathrm{M}_2(\K)) \caB = \caB$ and $\caB (I_n \otimes \mathrm{M}_2(\K)) = \caB$.
    Let $W = \caB U$. Then,
    \[
        (I_n \otimes \mathrm{M}_2(\K)) W = (I_n \otimes \mathrm{M}_2(\K)) \caB  U = \caB U = W.
    \]
    By the previous lemma, $W = W_0 \otimes \K^2$.
    Let $U' = (I \otimes \mathrm{M}_2(\K)) U$.
    Then, $U' \geq U$ and $\caB U' = \caB (I_n \otimes \mathrm{M}_2(\K)) U = \caB U = W$, so
    $
       2c \geq \dim U' - \dim \caB U' \geq \dim U - \dim \caB U = 2c.
    $
    Thus, $\dim U' = \dim U$ and hence $U' = (I_n \otimes \mathrm{M}_2(\K)) U = U$.
    By the previous lemma, $U = U_0 \otimes \K^2$.
    \modify{Then,
    \[
        W_0 \otimes \K^2 = W = \caB U = (\caA \otimes \mathrm{M}_2(\K))(U_0 \otimes \K^2) = (\caA U_0) \otimes \K^2,
    \]
    so we have $W_0 = \caA U_0$.
    }
    Thus,
    \[
     \dim U_0 - \dim \caA U_0 \modify{{}={}} \dim U_0 - \dim W_0 = \frac{\dim U - \dim W}{2} = c
     \]
     and $U_0$ is a $c$-shrunk subspace for $\caA$.

     If $U'$ is a $c$-shrunk subspace of $\caA$, then $U' \otimes \K^2$ is a $2c$-shrunk subspace of $\caA^{\{2\}}$.
     Therefore, $U' \otimes \K^2 \geq U = U_0 \otimes \K^2$ if $U$ is the minimum $2c$-shrunk subspace of $\caA^{\{2\}}$.
     Hence, $U' \leq U_0$.
\end{proof}

\begin{proof}[Proof of \Cref{thm:2-cover}]
    As $|R| \geq 4n$, the random matrix $M$ attains the maximum rank in $\blowup{\caA}{2}$ with probability at least $\frac12$ by the Schwartz-Zippel lemma.
    We can verify whether $M$ attains the maximum rank by checking $W_\infty \leq \im A$ by \Cref{lem:Wong}.
    Given the maximum-rank $M$, the computed subspace $U^*_0$ is the minimum $c$-shrunk subspace of $\caA$ by \Cref{lem:Wong,lem:shrunk-blowup}.
    So $(S^*, T^*)$ is the dominant 2-cover by \modify{\Cref{lem:shrunk-vs-2-cover}}.

    Now we analyze the time complexity.
    A basis of the limit of the Wong sequence is found in $O(mn^{\omega+1})$ time as follows.
    First, we compute a basis of $\ker M$ and a pseudoinverse $M^+$ of $M$ in $O(mn^{\omega-1})$ time by Gaussian elimination.
    This is done by factorization~\eqref{eq:naive-factor}.
    By \Cref{lem:Wong}, $W_i = {(\caB M^+)}^i \ker M$ for each $i$.
    So given a basis of $W_{i-1}$, we can obtain a basis of $W_i$ in $O(mn^\omega)$ time.
    By repeating this at most $2n$ times, we obtain a basis of the limit $W_\infty$ in $O(mn^{\omega+1})$ time.
    Once we obtain $W_\infty$, the remaining operations can be done in $O(mn^{\omega})$ time.
\end{proof}

\begin{remark}
\modify{Given an extreme maximum fractional matroid matching, one can find the dominant 2-cover with the unweighted algorithm by Chang~et~al.~\cite{Chang2001a}. Therefore, one can combine our algebraic algorithms with their algorithm in a black-box manner to find the dominant 2-cover. This yields an alternative randomized algorithm to find the dominant 2-cover. However, it is unclear whether Chang~et~al.'s algorithm has polynomial bit complexity over the rational field. On the other hand, the algorithm presented above does have polynomial bit complexity thanks to the connection to the Wong sequence, which is known to have polynomial bit complexity~\cite{Ivanyos2015}.}
\end{remark}

\section{Faster Algorithm with Sparse Representation}\label{sec:faster-algorithm}
Algorithm~\ref{alg:simple} finds a maximum fractional matroid matching in $O(n^\omega + mn^2)$ time.
In this section, we design a faster algorithm.
For the simplicity of the presentation, we first describe algorithms for finding a fractional parity base.
We explain how to extend them to maximum fractional matroid matching in \Cref{subsec:extension}.

A fractional matroid matching is said to be \emph{extendible} if there is a fractional parity base $z$ such that $y \le z$.
For $y \in \zho^m$, we define a matrix $Z(y)$ by
\begin{align}
    Z(y) =
    \begin{bNiceArray}[margin]{c|ccc}
        O                     & I_2 \otimes B_1 & \Cdots & I_2 \otimes B_m \\ \midrule
        -I_2 \otimes B_1^\top & Y_1 \otimes \Delta &  &  \\
         \Vdots               &  & \Ddots   & \\
        -I_2 \otimes B_m^\top &  &  & Y_m \otimes \Delta
    \end{bNiceArray},
\end{align}
where $Y_i = U_i U_i^\top$ and $U_i$ is a $2 \times 2(1-y_i)$ matrix with indeterminates.
When $y = \bfzero$, $Z(y)$ coincides with the sparse representation~\eqref{def:sparse}.
We show an analogue of \Cref{lem:characterize-nonsingular-principal-submatrix} for $Z(y)$.

\begin{lemma}\label{lem:extensible}
    For $y \in \zho^m$, the matrix $Z(y)$ is nonsingular if and only if there exists a half-integral $z \ge y$ such that $|z| = \frac{n}{2}$ and $B(z)$ is nonsingular.
\end{lemma}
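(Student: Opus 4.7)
The plan is to evaluate $\pf Z(y)$ by the additive Pfaffian expansion (\cref{prop:pf-expansion}) applied to the decomposition $Z(y) = Q + T$, where $Q$ carries the off-diagonal part with $\mathbf{B} \coloneqq \begin{bmatrix} I_2 \otimes B_1 & \cdots & I_2 \otimes B_m \end{bmatrix}$ in the top-right, and $T$ carries the block-diagonal part $D(y) = \operatorname{diag}_i(Y_i \otimes \Delta)$. Since the top-left $2n \times 2n$ block of $Z(y)$ vanishes, every surviving term in the Pfaffian must pair each top index with a bottom index, giving
\begin{align}
    \pf Z(y) = \sum_{\substack{T' \subseteq [4m] \\ |T'| = 2n}} \pm \det \mathbf{B}[*, T'] \prod_{i=1}^m \pf(Y_i \otimes \Delta)[\overline{T'_i}],
\end{align}
where $T'_i$ denotes the restriction of $T'$ to the $i$th block of bottom indices. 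Since $Y_i \otimes \Delta$ has rank $4(1-y_i)$, and (as in \cref{lem:naive-expansion}) its non-vanishing $2$-index Pfaffians correspond only to the four subsets $\{1,2\}$, $\{1,4\}$, $\{2,3\}$, $\{3,4\}$, a non-vanishing term forces $|T'_i| \in \{0, 2, 4\}$ with $|T'_i| \geq 4 y_i$. Setting $z_i = |T'_i|/4$ yields half-integral $z$ with $z \geq y$ and $|z| = n/2$, which is exactly the type of $z$ the lemma predicts.

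I would then group the expansion by $z$ as $\pf Z(y) = \sum_{z \geq y,\, |z|=n/2} P_z(y)$. A degree count in each $U_i$-block shows that every term in $P_z(y)$ has total degree exactly $4(1-z_i)$ in the entries of $U_i$, so monomials coming from different $z$'s occupy disjoint degree buckets, and $\pf Z(y) \neq 0$ iff some $P_z(y) \neq 0$. The crux is then to establish $P_z(y) \neq 0 \iff B(z)$ is nonsingular. To this end I would further expand each Pfaffian factor via \cref{prop:ishikawa-wakayama} using the factorization $Y_i \otimes \Delta = (U_i \otimes I_2)(I_{2(1-y_i)} \otimes \Delta)(U_i \otimes I_2)^\top$, which rewrites $P_z(y)$ as a signed sum of $\det \mathbf{B}[*, T']$ weighted by minors of $U_i \otimes I_2$. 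Since $B(z) = \mathbf{B}\,\tilde{U}(z)$ with $\tilde{U}(z) = \operatorname{diag}_i(U_i \otimes I_2)$ (the $U_i$ here having width $2z_i$, so distinct from the ones in $Z(y)$), the Cauchy--Binet expansion $\det B(z) = \sum_{T''} \det \mathbf{B}[*,T''] \det \tilde{U}(z)[T'',*]$ has an identical combinatorial skeleton indexed over the same valid $T$-configurations.

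The main obstacle is the cancellation among $T'$-configurations: in each $z_i = 1/2$ block, the two subsets $\{1,4\}$ and $\{2,3\}$ produce monomials $+u_{i,1}u_{i,2}$ and $-u_{i,1}u_{i,2}$, so the corresponding two $\det \mathbf{B}[*, T']$ contributions can a priori cancel inside $P_z(y)$ (and, by the same token, inside $\det B(z)$). I plan to handle this using the Plücker-type identity $\det[a \mid c]\det[b \mid d] - \det[b \mid c]\det[a \mid d] = \det[a \mid b]\det[c \mid d]$ applied blockwise: the signed sum of the two competing contributions in block $i$ factors as $\det[a_i \mid b_i]$ times a smaller determinant of the exact combinatorial form appearing in $\det B(z)$. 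Because $\det[a_i \mid b_i] \neq 0$ (since $\{a_i, b_i\}$ is a basis of $\ell_i$), this cancellation is benign, and iterating over all $z_i = 1/2$ blocks shows $P_z(y)$ and $\det B(z)$ vanish under the same combinatorial condition on the $\det \mathbf{B}[*, T']$, giving $P_z(y) \neq 0 \iff \det B(z) \neq 0$ and thus the lemma.
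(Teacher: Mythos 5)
Your expansion of $\pf Z(y)$ via \cref{prop:pf-expansion}, the identification of the surviving configurations with half-integral $z \ge y$, $|z| = \frac{n}{2}$, and the multidegree grading that isolates each $P_z(y)$ all match the paper's proof (the paper indexes by $S = \overline{T'}$ and groups by $z' = \ones - z$, but it is the same computation; your explicit degree-count in the $U_i$-blocks is a clean substitute for the paper's implicit non-cancellation argument). Where you diverge is the crux $P_z(y) \neq 0 \iff \det B(z) \neq 0$: the paper does not re-derive this by hand but instead relabels the Pfaffian factors so that the grouped sum becomes exactly the expansion of \cref{lem:naive-expansion}, and then invokes \cref{lem:characterize-nonsingular-principal-submatrix} — i.e., it exploits that the two factorizations $B(w)D(w)B(w)^\top$ and $BE(w)B^\top$ of $\blowup{A}{2}(w)$ have already been compared, and that the first one (\cref{lem:pf-expand}) has no $\{1,4\}$-versus-$\{2,3\}$ cancellation to worry about because $\pf(I_{2y_i}\otimes\Delta)[J_i]$ vanishes on those subsets. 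Your route is self-contained but redoes that comparison.

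Within your route there is one step that does not work as written: the Plücker factorization. For $n > 2$ the expression $\det[a_i \mid b_i]$ is not a scalar, and the antisymmetrized combination $D_{14} - D_{23}$ of $2n \times 2n$ determinants does not factor as such a quantity times a smaller determinant; the three-term Plücker relation you quote lives in $\Gr(2,4)$ and does not apply to columns of $\K^{2n}$. Fortunately the factorization is also unnecessary. All you need is that the coefficient of the mixed monomial $u_{i,1}u_{i,2}$ (per block with $z_i = \frac12$) in $P_z(y)$ is, up to a global sign, the \emph{same} signed combination $D_{14} - D_{23}$ that Cauchy--Binet produces as the coefficient of $v_{i,1}v_{i,2}$ in $\det B(z)$ (the relative minus sign there coming from reordering the columns $\{3,2\}$ to $\{2,3\}$). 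Then $P_z(y)$ and $\det B(z)$ have identical coefficients in their respective auxiliary variables and vanish simultaneously, which is the conclusion you want. Establishing this does require a sign check you currently skip: that the $\pm$ of \cref{prop:pf-expansion} takes the same value on the four configurations $T'_i \in \{\{1,2\},\{1,4\},\{2,3\},\{3,4\}\}$ within a block (it does — the shuffle permutations are all even — but if it did not, you could end up with $D_{14} + D_{23}$ in one polynomial and $D_{14} - D_{23}$ in the other, and the equivalence would fail). With the Plücker step replaced by this direct coefficient comparison and the sign verification supplied, your proof is correct.
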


\begin{proof}
    Without loss of generality, we assume that $\supp_1(y) = \{1, \dots, s\}$ and $\supp_{\frac12}(y) = \{s+1, \dots, t\}$ for some $s \leq t$.
    Let us write $Z(y) = Q + T$ where
    \begin{align}
        Q & =
    \begin{bNiceArray}[margin]{c|ccc}
        O                     & I_2 \otimes B_1 & \Cdots & I_2 \otimes B_m \\ \midrule
        -I_2 \otimes B_1^\top & \\
        \Vdots                & \\
        -I_2 \otimes B_m^\top &
    \end{bNiceArray}, \\
    T &=
    \begin{bNiceArray}[margin]{c|ccc}
        O & \\ \midrule
        \phantom{I_2 \otimes B_1^\top} & Y_1 \otimes \Delta &  &  \\
                                       &  & \Ddots   & \\
                                       &  &  & Y_m \otimes \Delta
    \end{bNiceArray}.
    \end{align}
    Then, by \Cref{prop:pf-expansion}, we have
    \begin{align}
        \pf Z(y) = \sum_{S} \pm \pf Q[\overline{S}] \pf T[S],
    \end{align}
    where $S$ runs over all column subsets of $Z(y)$.
    Consider $S$ such that $\pf Q[\overline{S}]$ and $\pf T[S]$ are both nonzero.
    First, $S \cap [n] = \emptyset$, otherwise $T[S]$ contains zero rows.
    For each $i \in [m]$, let $S_i$ denote the intersection of $S$ and the columns containing $I_2 \otimes B_i$.
    By the block-diagonal structure of $T$, $\pf T[S] = \prod_{i=1}^m \pf (Y_i \otimes \Delta)[S_i]$ and
    \begin{itemize}
        \item $S_i = \emptyset$ for $i \leq s$; otherwise $T[S]$ contains zero rows.
        \item For $s < i \leq t$,
        $S_i$ is either $\{1,2\}$, $\{1,4\}$, $\{2,3\}$, or $\{3,4\}$.
        \item For other $i$, $S_i$ is even.
    \end{itemize}
    Letting $z'$ be a vector such that $z'_i = \frac{|S_i|}{4}$, this condition is equivalent to $z'$ is half-integral and $z' \leq \ones-y$.
    On the other hand, we have
    \begin{align}
        \pf Q[\overline{S}]
        = \lambda_{\overline{S_1}, \dotsc, \overline{S_m}},
    \end{align}
    where $\overline{S_i}$ is the complement of $S_i$ within the columns containing $I_2 \otimes B_i$ and $\lambda_{\overline{S_1}, \dotsc, \overline{S_m}}$ is the polynomial defined by~\eqref{def:lambda} with $I = [2n]$.
    Thus, $\pf Q[\overline{S}] \neq 0$ implies $\sum_{i=1}^m |\overline{S_i}| = 2n$, or equivalently, $|S| = 4m - 2n$.
    Grouping the terms with $z' \in \zho^m$, we obtain
    \begin{align}
        \pf Z(y)
        &= \sum_{\substack{z' \in \zho^m:\\ |z'| = m - \frac{n}{2}, z' \le \ones-y}}
        \sum_{(S_1, \dots, S_m) \in \mathcal{S}(z')} \lambda_{\overline{S_1}, \dotsc, \overline{S_m}}
        \prod_{i=1}^m \pf(Y_i \otimes \Delta)[S_i] \\
        &= \sum_{\substack{z' \in \zho^m:\\ |z'| = m - \frac{n}{2}, z' \le \ones-y}}
        \sum_{(S_1, \dots, S_m) \in \mathcal{S}(\ones - z')} \lambda_{S_1, \dotsc, S_m}
        \prod_{i=1}^m \pf(Y_i \otimes \Delta)[\overline{S_i}],
    \end{align}
    where $\mathcal{S}(z')$ is defined in \Cref{lem:naive-expansion} and the second equality follows since $(S_1, \dots, S_m) \in \mathcal{S}(z')$ if and only if $(\overline{S_1}, \dots, \overline{S_m}) \in \mathcal{S}(\ones - z')$.
    Now observe that the RHS is a nonzero polynomial if and only if
    \begin{align}
        \sum_{\substack{z' \in \zho^m:\\ |z'| = m - \frac{n}{2}, z' \le \ones-y}}
        \sum_{(S_1, \dots, S_m) \in \mathcal{S}(\ones - z')} \lambda_{S_1, \dotsc, S_m}
        \prod_{i=1}^m \pf(Y_i \otimes \Delta)[S_i]
    \end{align}
    is a nonzero polynomial.
    By \Cref{lem:naive-expansion,lem:characterize-nonsingular-principal-submatrix}, the latter is nonzero if and only if there exists a half-integral $z' \leq \ones-y$ such that $|z'| = m - \frac{n}{2}$ and $B(\ones - z')$ is nonsingular.
    By changing a variable $z = \ones - z'$, we see that this is equivalent to the existence of a half-integral $z \geq y$ such that $|z| = \frac{n}{2}$ and $B(z)$ is nonsingular.
\end{proof}

\begin{algorithm}[tb]
    \caption{An $O(m^\omega)$-time algorithm to find a fractional parity base}\label{alg:sparse}
    \begin{algorithmic}[1]
        \Input{A fractional matroid parity polytope $P$ given as $a_i, b_i\in \K^n$ ($i \in [m]$) and a finite subset $R$ of $\K$.}
        \Output{The lexicographically maximum fractional parity base in $P$}
        \State{$y \gets \bfzero$ and $L \gets [m]$.}
        \State{Compute a matrix $Z(y)$ and substitute random elements from $R$ uniformly at random.}
        \State{Compute $M = {Z(y)}^{-1}$.}
        \State \Return $\textsc{BuildFractionalParityBase}(L, y, M)$
        \item[]
        \Function{BuildFractionalParityBase}{$L = \{a, \dots, b\}, y, M$}
        \State\Comment{Invariant: (i) $y$ is extensible and (ii) $M = Z(y)^{-1}[L]$.}
        \If{$|L| \geq 2$}
            \State Let $L_1 = \bigl\{a, \dots, \floor{\frac{a+b}{2}}\bigr\}$ and $L_2 = \bigl\{\floor{\frac{a+b}{2}} + 1, \dots, b\bigr\}$.
            \State $y \gets \textsc{BuildFractionalParityBase}(L_1, y, M[L_1])$ \label{line:BFMM-L1-update}
            \State Compute $M[L_2] := {Z(y)}^{-1}[L_2]$ in $O(|L|^\omega)$ time as shown in Lemma~\ref{lem:small-update}.
            \State $y \gets \textsc{BuildFractionalParityBase}(L_2, y, M[L_2])$
            \State \Return $y$
        \Else
            \State Take the unique line $a$ in $L$.
            \State Let $D_{\frac12} = Z\bigl(y + \frac{1}{2} e_a\bigr)- Z(y)$ and $D_{1} = Z(y + e_a)- Z(y)$.
            \State $y_a \gets \begin{cases}
                1   & (\det(I_{|L|} + D_{1}[L]M[L]) \neq 0), \\
                \frac{1}{2} & \bigl(\det\bigl(I_{|L|} + D_{\frac12}[L]M[L]\bigr) \neq 0, \det(I_{|L|} + D_{1}[L]M[L]) = 0\bigr), \\
                0   & \bigl(\det\bigl(I_{|L|} + D_{\frac12}[L]M[L]\bigr) = 0\bigr).
            \end{cases}$ \label{line:BFMM-x-update}
            \State \Return $y$
        \EndIf
        \EndFunction
    \end{algorithmic}
\end{algorithm}

For a set $L$ of lines, we abuse the notation $Z(y)[L]$ to denote the $4|L|\times 4|L|$ principal submatrix of $Z(y)$ corresponding to the lines in $L$.

\subsection{An \texorpdfstring{$O(m^\omega)$}{O(m\textasciicircum omega)}-Time Algorithm}
Now we describe an $O(m^\omega)$-time algorithm using $Z(y)$ under the assumption that $P$ has a fractional parity base.
The algorithm keeps a fractional matroid matching $y$ and a matrix $M$.
Initially, $y = \bfzero$ and $M = {Z(y)}^{-1}$; note that $Z(y)$ is nonsingular by \Cref{lem:extreme-converse,lem:extensible}.
The main algorithm calls subroutine \textsc{BuildFractionalParityBase}.
\textsc{BuildFractionalParityBase} takes a set $L$ of lines, a fractional matroid matching $y$, and a matrix $M$ as inputs, which satisfies the two invariants; (i) $y$ is extensible and (ii) $M = {Z(y)}^{-1}[L]$.
The base case is that $L$ consists of a single line, say $a$.
Then, it tries to increase the $a$th component of $y$, while keeping that $y$ is extensible.
If $L$ consists of more than one lines, then \textsc{BuildFractionalParityBase} takes an (almost) equipartition $L_1, L_2$ of $L$ and recurse on $L_1$.
If $y$ is updated on the recursion, we compute $M = {Z(y)}^{-1}[L_2]$ in $O(|L|^\omega)$ time by Lemma~\ref{lem:small-update} to keep the invariant.
Finally, we recurse on $L_2$ and return the output.
The presudocode is given in Algorithm~\ref{alg:sparse}.

Similar to Algorithm~\ref{alg:simple}, we can show that Algorithm~\ref{alg:sparse} finds the lexicographically \emph{maximum} fractional parity base $y^*$.
This follows from the next lemma for \textsc{BuildFractionalParityBase}.

\begin{lemma}\label{lem:BFMM}
     Given $L = \{a, \dots, b\}$ and $y$ such that $y_i = y_i^*$ for $i < a$ and $y_i = 0$ for $i \geq a$, then \textsc{BuildFractionalParityBase} outputs $y$ such that $y_i = y_i^*$ for $i \leq b$ and $y_i = 0$ for $i > b$.
\end{lemma}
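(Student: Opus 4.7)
The plan is to prove \cref{lem:BFMM} by induction on $|L|$, using \cref{lem:extensible} to reduce assertions about fractional parity bases to nonsingularity of $Z(y)$, and \cref{lem:small-update} both to evaluate those nonsingularity tests from the cached $M$ and to propagate $M$ between recursive calls. The engine is a single observation: the lexicographically maximum fractional parity base $y^*$ is an extreme point of $P$ (lex-maxima on polytopes are vertices), so by \cref{lem:extreme-converse} the square matrix $B(y^*)$ is nonsingular, and therefore by \cref{lem:extensible} the matrix $Z(z)$ is nonsingular for every $z \in \zho^m$ with $z \le y^*$.

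\textbf{Base case $|L| = 1$.} Write $L = \{a\}$; the precondition gives $y \le y^*$. For $\alpha \in \bigl\{\tfrac12, 1\bigr\}$, \cref{lem:small-update} applied with the update $D_\alpha$ supported on the $4 \times 4$ block for line $a$ makes the test in \cref{line:BFMM-x-update} equivalent to nonsingularity of $Z(y + \alpha e_a)$. If $y_a^* \ge \alpha$ then $y + \alpha e_a \le y^*$, and the observation above gives $Z(y + \alpha e_a)$ nonsingular, so the algorithm raises $y_a$ to at least $\alpha$. Conversely, suppose $y_a^* < \alpha$ yet $Z(y + \alpha e_a)$ is nonsingular. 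By \cref{lem:extensible} there is a half-integral $z \ge y + \alpha e_a$ with $|z| = n/2$ and $B(z)$ nonsingular, which by \cref{lem:characterize-half-integral-matching} makes $z$ a fractional parity base. Let $i_0$ be the smallest index where $z$ and $y^*$ differ: if $i_0 < a$ then $z_{i_0} \ge y_{i_0} = y^*_{i_0}$ with strict inequality, and if $i_0 = a$ then $z_a \ge \alpha > y_a^*$; the remaining case $i_0 > a$ is impossible since it would force $z_a = y_a^* < \alpha$. In every admissible case $z >_{\mathrm{lex}} y^*$, contradicting the lex-maximality of $y^*$. This pins down $y_a = y_a^*$ exactly.

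\textbf{Inductive step $|L| \ge 2$.} Partition $L = L_1 \cup L_2$ as in the algorithm. Since $M[L_1] = Z(y)^{-1}[L_1]$, the preconditions of the recursive call on $L_1$ are met, and by the inductive hypothesis that call returns $y$ agreeing with $y^*$ on $L_1$ and with $y_i = 0$ for $i > \max L_1$. Let $S$ be the block of row/column indices of $Z(\cdot)$ occupied by the lines of $L_1$. Because $y$ changed only on $L_1$, the difference $Z(y_{\mathrm{new}}) - Z(y_{\mathrm{old}})$ is supported on $S \times S$, and \cref{lem:small-update} expresses $Z(y_{\mathrm{new}})^{-1}[L_2]$ as a low-rank correction of $M_{\mathrm{old}}[L_2]$ using the already-stored submatrices $M_{\mathrm{old}}[L_2, L_1]$, $M_{\mathrm{old}}[L_1, L_2]$, and $M_{\mathrm{old}}[L_1]$. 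Thus the invariant $M = Z(y)^{-1}[L_2]$ is re-established, the precondition for the call on $L_2$ (agreement with $y^*$ on all indices below $\min L_2$) holds, and the inductive hypothesis applied to $L_2$ completes the argument.

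\textbf{Main obstacle.} The delicate step is the base case: translating nonsingularity of $Z(y + \alpha e_a)$ into a fractional parity base above $y + \alpha e_a$ via \cref{lem:extensible}, and then carrying out a careful lexicographic comparison to \emph{y}$^*$ that rules out over-setting. One must check that the first index of disagreement between $z$ and $y^*$ really forces $z$ to be strictly lex-greater than $y^*$ rather than incomparable; the precondition $y \le y^*$, combined with $z \ge y$, is exactly what makes this argument go through.
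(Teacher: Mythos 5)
Your proof is correct and follows essentially the same route as the paper's: induction on $|L|$, with the base case handled by translating the determinant tests into nonsingularity of $Z(y+\alpha e_a)$ via \cref{lem:small-update}, then using \cref{lem:extensible} together with \cref{lem:extreme-converse} (applied to the extreme point $y^*$) for the "raise" direction and \cref{lem:extensible} plus \cref{lem:characterize-half-integral-matching} plus lex-maximality for the "don't over-raise" direction. Your write-up is in fact slightly more explicit than the paper's in two places it leaves implicit — that $y^*$ is a vertex of $P$ (so \cref{lem:extreme-converse} applies) and the case analysis showing the witness $z$ is strictly lex-greater than $y^*$ — but these are elaborations, not a different argument.
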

\begin{proof}
    The proof is by induction on the size of $L$.

    First, suppose that $L = \{a\}$.
    We consider three cases based on the value of $y_a^*$.

    \paragraph{Case 1: $y_a^* = 0$.} Then $Z\bigl(y + \frac{1}{2}e_a\bigr)$ is singular; otherwise, by \Cref{lem:characterize-half-integral-matching,lem:extensible}, there exists a fractional parity base $z \geq y$ such that $z_a \geq \frac12 > y_a^*$, which contradicts the lexicographical maximality of $y^*$.
    Hence $y_a = 0$ by Line~\ref{line:BFMM-x-update}.
    Note that $Z\bigl(y + \frac{1}{2}e_a\bigr)$ is nonsingular if and only if $\det\bigl(I + D_{\frac12}[L]M[L]\bigr) \neq 0$ by \Cref{lem:small-update}.

    \paragraph{Case 2: $y_a^* = \frac12$.} Then $Z\bigl(y + \frac{1}{2}e_a\bigr)$ is nonsingular by \Cref{lem:extensible} with $y = y^*$. Furthermore, $Z(y + e_a)$ is singular by a similar argument.
    Hence, $y_a = \frac12$.

    \paragraph{Case 3: $y_a^* = 1$.} Then $Z(y + e_a)$ is nonsingular by \Cref{lem:extensible} with $y = y^*$, so $y_a = 1$.

    Thus, we have shown $y_a = y_a^*$.

    Next, suppose that $|L| > 2$.
    Then, the algorithm recurses on $L_1 = \bigl\{a, \dots, \floor{\frac{a+b}{2}}\bigr\}$ and $L_2 = \bigl\{\floor{\frac{a+b}{2}} + 1, \dots, b\bigr\}$.
    By the induction hypothesis, we have $y_i = y_i^*$ for $i \leq \floor{\frac{a+b}{2}}$ and $y_i = 0$ for $i > \floor{\frac{a+b}{2}}$ after Line~\ref{line:BFMM-L1-update}.
    So the new $y$ satisfies the assumption for $L_2$.
    Applying the induction hypothesis again, we have $y_i = y_i^*$ for $i \leq b$ and $y_i = 0$ for $i > b$ in the final output, which completes the proof.
\end{proof}

We have the following theorem.
\begin{theorem}\label{thm:sparse}
   If $|R| \geq 16mn$, \Cref{alg:sparse} finds the lexicographically maximum fractional parity base $y^*$ in $O(m^\omega)$ time with probability at least $\frac12$.
\end{theorem}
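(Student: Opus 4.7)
The plan is to invoke \cref{lem:BFMM} on the top-level call with $L = [m]$ and $y = \bfzero$, whose precondition ``$y_i = y_i^*$ for $i < 1$ and $y_i = 0$ for $i \ge 1$'' holds vacuously. This will give the correctness claim that the returned vector equals the lexicographically maximum fractional parity base $y^*$. To use the lemma I must first verify the two invariants of the subroutine: (i) $y$ is extensible at every call and (ii) $M = Z(y)^{-1}[L]$. The key bridge here is the equivalence ``$Z(y + \alpha e_a)$ is nonsingular $\iff$ $y + \alpha e_a$ is extensible''. The forward direction is immediate from \cref{lem:extensible,lem:characterize-half-integral-matching}; for the reverse, I would pick an extreme point of the face $\{z \in P : z \ge y + \alpha e_a,\ |z| = n/2\}$, which is extreme in $P$, apply \cref{lem:extreme-converse} to get $B(z)$ nonsingular, and then apply \cref{lem:extensible}. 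With this equivalence in hand, invariant (i) is preserved in the base case because the algorithm sets $y_a$ to the largest $\alpha \in \zho$ for which $Z(y + \alpha e_a)$ is nonsingular; this also establishes that $Z(\bfzero)$ is nonsingular at initialization, so invariant (ii) is well-defined.

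\textbf{Preservation of invariant (ii) across the divide.} After the recursive call on $L_1$, the change $D = Z(y_{\mathrm{new}}) - Z(y_{\mathrm{old}})$ is supported on the rows and columns indexed by $L_1$. Applying \cref{lem:small-update} with $S = L_1$, the block $M[L_2] = Z(y_{\mathrm{new}})^{-1}[L_2]$ is recovered from $M[L_1]$, $M[L_1, L_2]$, $M[L_2, L_1]$, $M[L_2]$ and $D[L_1]$, all of which are accessible as sub-blocks of the current $M[L]$; this recomputation costs $O(|L|^\omega)$. Moreover, at the base case, the tests $\det(I + D_\alpha[L] M[L]) \ne 0$ coincide with nonsingularity of $Z(y + \alpha e_a)$ by \cref{lem:small-update}, so $y$ is updated exactly as required by \cref{lem:BFMM}.

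\textbf{Probability and complexity.} Each nonsingularity test in the algorithm is, up to sign, equivalent to $\det Z(y') \ne 0$ for some $y' \in \zho^m$, and $\det Z(y')$ is a polynomial of degree $O(m)$ in the randomized entries of $U_1, \dotsc, U_m$, since only the $4m$ bottom rows of $Z$ carry non-constant entries and those are quadratic. There are $O(m)$ base cases with $O(1)$ tests each, so the union bound together with the Schwartz--Zippel lemma bounds the total failure probability by $O(m^2)/|R|$, which is at most $\tfrac{1}{2}$ whenever $|R| \ge 16mn$ (using $n \le 2m$). For the running time, the initial inversion $M = Z(\bfzero)^{-1}$ takes $O((n+m)^\omega) = O(m^\omega)$ time, and the divide-and-conquer recursion satisfies $T(|L|) = 2T(|L|/2) + O(|L|^\omega)$, which solves to $T(m) = O(m^\omega)$ by the master theorem since $\omega > 1$.

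\textbf{Main obstacle.} The most delicate point is the bookkeeping needed to maintain invariant (ii) across the divide: the recursion has to pass the full sub-block $M[L]$ rather than only its diagonal $M[L_1], M[L_2]$, so that the cross-blocks $M[L_1, L_2]$ and $M[L_2, L_1]$ remain available for the small-update formula once $y$ has been modified by the call on $L_1$. A secondary subtlety is verifying that $Z(y+\alpha e_a)$-nonsingularity really coincides with extensibility of $y+\alpha e_a$, which is where the extreme-point argument is needed because \cref{lem:extensible} characterizes nonsingularity only in terms of half-integral $z$ with $B(z)$ nonsingular, not in terms of arbitrary members of $P$.
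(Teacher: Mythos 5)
Your overall route is the same as the paper's: correctness is delegated to \cref{lem:BFMM} (invoked at the top level with $L=[m]$, $y=\bfzero$), invariant (ii) is maintained via \cref{lem:small-update} on the block supported by $L_1$, and the running time follows from the recurrence $T(m)=2T(m/2)+O(m^\omega)$ plus the $O((n+m)^\omega)=O(m^\omega)$ initial inversion. However, one step of your argument would fail. You claim the equivalence ``$Z(y+\alpha e_a)$ is nonsingular $\iff$ $y+\alpha e_a$ is extensible'' and prove the reverse direction by taking ``an extreme point of the face $\{z\in P: z\ge y+\alpha e_a,\ |z|=n/2\}$, which is extreme in $P$.'' That set is not a face of $P$: the constraints $z_i\ge \tfrac12$ or $z_i\ge 1$ are not valid inequalities for $P$, so extreme points of this intersection need not be extreme points of $P$, and \cref{lem:extreme-converse} does not apply to them. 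In fact the full reverse implication is essentially the open question the paper flags: taking $y'$ to be a half-integral parity base, the only candidate witness in \cref{lem:extensible} is $z=y'$ itself, so your equivalence would assert that $B(y')$ has full column rank for every half-integral $y'\in P$ with $|y'|=n/2$ --- precisely what the paper states is unknown for non-extreme points. The paper avoids this by never needing the general converse: the only $y'$ for which nonsingularity of $Z(y')$ must be certified are those with $y'\le y^*$, and there the witness $z=y^*$ is extreme (it is the lexicographic maximum over the face $\{z\in P:|z|=n/2\}$), so \cref{lem:extreme-converse,lem:extensible} apply directly; this is exactly how Cases 2 and 3 of \cref{lem:BFMM} are argued. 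Your proof is repaired by replacing the face argument with this specific witness.

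A secondary issue is the probability bound: from ``failure probability $O(m^2)/|R|$'' and $|R|\ge 16mn$ you cannot conclude the bound is at most $\tfrac12$ via $n\le 2m$ --- that inequality only gives $16mn\le 32m^2$, which points the wrong way (the deduction would require $n=\Omega(m)$). Your degree estimate $O(m)$ for $\det Z(y')$ is right (the Pfaffian is homogeneous of degree $4m-2n$ in the entries of the $U_i$), so a naive union bound over $O(m)$ tests genuinely needs $|R|=\Omega(m^2)$ rather than $16mn$; the paper's own proof omits this analysis with ``similar to \cref{alg:simple}'', so you should either enlarge $R$ or supply the sharper argument the paper leaves implicit rather than assert an inequality that does not follow.
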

\begin{proof}
The output equals $y^*$ by the above lemma for $a=1$, $b=m$, and $y = \bfzero$.
Let us analyze the time complexity of \Cref{alg:sparse}.
Let $f(m)$ be the time complexity of \textsc{BuildFractionalParityBase} with $|L| = m$.
Then, $f(1) = O(1)$ and $f(m) = 2f\bigl(\frac{m}{2}\bigr) + O(m^\omega)$.
Solving this recursion, $f(m) = O(m^\omega)$.
Computing ${Z(\bfzero)}^{-1}$ takes $O({(n + m)}^\omega) = O(m^\omega)$ time as $n \leq 2m$.
In total, the time complexity of \Cref{alg:sparse} is $O(m^\omega)$.
The probability of success is similar to \Cref{alg:simple}, so we omit it.
\end{proof}

\subsection{An \texorpdfstring{$O(mn^{\omega - 1})$}{O(mn\textasciicircum(omega - 1))}-Time Algorithm}
\Cref{alg:sparse} is faster than \Cref{alg:simple} for large $n$, e.g., $n = \Omega(m)$.
In this section, we give an $O(mn^{\omega - 1})$-time algorithm based on \Cref{alg:sparse}, which is faster for all $n$.

Without loss of generality, we assume that $m$ is an integer multiple of $n$.
If this is not the case, we can copy lines to satisfy the condition.
This does not change the optimal value.

The algorithm splits the set $L$ of lines into $\frac{m}{n}$ subsets $L_1, \dots, L_{\frac{m}{n}}$ of size $n$.
Then, we apply \textsc{BuildFractionalParityBase} to $L_1, \dots, L_m$ sequentially.
The algorithm is shown in \Cref{alg:faster}.

\begin{algorithm}[tb]
    \caption{An $O(mn^{\omega-1})$-time algorithm to find a fractional parity base}\label{alg:faster}
    \begin{algorithmic}[1]
        \Input{A fractional matroid parity polytope $P$ given as $a_i, b_i\in \K^n$ ($i \in [m]$) and a finite subset $R$ of $\K$.}
        \Output{The lexicographically maximum fractional parity base in $P$}
        \State $y \gets \bfzero$ and $L \gets [m]$.
        \State Compute $T := T(\bfzero)[L]$ with random substituted values from $R$.
        \State Compute $M := (I_2 \otimes B) T^{-1} {(I_2 \otimes B)}^\top$ and $M^{-1}$ in $O(mn^{\omega-1})$ time as shown in \Cref{claim:compute-Shur}.
        \State Partition $[m]$ intro $\frac{m}{n}$ subsets $L_1, \dots, L_{\frac{m}{n}}$ with each size $n$.
        \For{$\ell = 1, \dots, \frac{m}{n}$}
            \State Compute $M := {Z(y)}^{-1}[L_\ell]$ in $O(n^\omega)$ time as shown in \Cref{claim:compute-Zsub}.
            \State $x \gets \textsc{BuildFractionalParityBase}(L_\ell, y, M)$
        \EndFor
        \State \Return $x$
    \end{algorithmic}
\end{algorithm}

We have the following theorem.
\begin{theorem}\label{thm:faster}
    If $|R| \geq 16mn$, \Cref{alg:faster} finds the lexicographically maximum fractional parity base in $O(mn^{\omega-1})$ time with probability at least $\frac12$.
\end{theorem}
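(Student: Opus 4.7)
The plan is to combine the correctness analysis of \Cref{alg:sparse} with a careful amortized time analysis that exploits the Schur-complement structure of $Z(y)$.

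\textbf{Correctness.}
The outer loop of \Cref{alg:faster} can be viewed as a sequence of $m/n$ invocations of \textsc{BuildFractionalParityBase}, one for each block $L_\ell$. I would prove by induction on $\ell$ that after the $\ell$th iteration, the current vector $y$ satisfies $y_i = y^*_i$ for all $i \in L_1 \cup \dots \cup L_\ell$ and $y_i = 0$ otherwise. The base case $\ell = 0$ is trivial. For the inductive step, the recursive invariant established in \Cref{lem:BFMM} applies to the single batch $L_\ell$ provided (i) the input $y$ satisfies $y_i = y^*_i$ for $i < \min L_\ell$ and $y_i = 0$ otherwise, and (ii) the matrix $M$ passed to \textsc{BuildFractionalParityBase} equals ${Z(y)}^{-1}[L_\ell]$. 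Condition (i) is exactly the inductive hypothesis, and condition (ii) is guaranteed by the Schur-complement update step in the loop. Hence after $\ell = m/n$ iterations the algorithm returns the lexicographically maximum fractional parity base $y^*$.

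\textbf{Time complexity.}
The main obstacle is ensuring that the initial computation of $M = (I_2 \otimes B)\,T^{-1}\,(I_2 \otimes B)^\top$ and the per-iteration extraction of ${Z(y)}^{-1}[L_\ell]$ both fit within the target budget. For the initial setup, $T$ is block-diagonal with $m$ blocks of size $4 \times 4$, so $T^{-1}$ is obtained in $O(m)$ time. The product $(I_2 \otimes B)\,T^{-1}$ is a $2n \times 4m$ matrix; multiplying it by the $4m \times 2n$ matrix $(I_2 \otimes B)^\top$ is a rectangular multiplication that splits into $m/n$ square $O(n^\omega)$ multiplications, totaling $O(mn^{\omega-1})$ time. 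Inverting the resulting $2n \times 2n$ matrix $M$ costs $O(n^\omega)$. This is encapsulated by the claim referenced as \Cref{claim:compute-Shur}. Inside the loop, each update ${Z(y)}^{-1}[L_\ell]$ can be obtained from the precomputed $M$, $M^{-1}$, and the block-diagonal $T^{-1}$ via the block inverse formula~\eqref{eq:Schur-inverse} combined with a low-rank update (\Cref{lem:small-update}) accounting for the modifications made to the diagonal blocks of $Z(\bfzero)$ in previous batches; this is the content of \Cref{claim:compute-Zsub} and runs in $O(n^\omega)$ time per iteration. Finally, \Cref{thm:sparse} bounds the cost of each call \textsc{BuildFractionalParityBase}$(L_\ell, y, M)$ by $O(n^\omega)$. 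Summing over the $m/n$ iterations gives $O(mn^{\omega-1})$, which dominates the $O(mn^{\omega-1})$ setup cost.

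\textbf{Probability of success.}
Every test performed by the algorithm reduces to deciding the nonvanishing of a polynomial in the entries of the random matrices $U_i$. Since each entry of $Z(y)$ is quadratic in these indeterminates and the relevant determinants are of order at most $O(m+n) = O(m)$, every such polynomial has degree at most $O(mn)$. With at most $O(m)$ such tests over the execution, the Schwartz--Zippel lemma with $|R| \ge 16mn$ yields a correct overall outcome with probability at least $\tfrac{1}{2}$, in direct analogy with the proofs of \Cref{thm:simple-algorithm-is-valid,thm:sparse}. The hardest part of the argument is the verification of the two Schur-complement claims, and I would state and prove these as standalone claims immediately before the theorem so they can be invoked cleanly here.
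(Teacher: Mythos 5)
Your proposal matches the paper's proof in all essentials: correctness by iterating \Cref{lem:BFMM} over the blocks $L_1, \dots, L_{m/n}$, the running time from \Cref{claim:compute-Shur,claim:compute-Zsub} together with \Cref{thm:sparse} applied to each block of size $n$, and a Schwartz--Zippel union bound for the success probability (which the paper itself only sketches by analogy with \Cref{alg:simple,alg:sparse}). The one slip is your degree bound of $O(mn)$ for the tested polynomials---each entry of $Z(y)$ is at most quadratic in the indeterminates and the matrix has order $O(m+n)$, so the degree is $O(m+n)$, consistent with the paper's accounting for the earlier algorithms.
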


We first show that the matrix computation in the algorithm can be done in the claimed running time.

\begin{claim}\label{claim:compute-Shur}
    We can compute $M = (I_2 \otimes B) T^{-1} {(I_2 \otimes B)}^\top$ and $M^{-1}$ in $O(mn^{\omega-1})$ time.
\end{claim}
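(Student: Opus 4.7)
The plan is to exploit the block-diagonal structure of $T$ and to pipeline the two remaining multiplications so that the dominant cost is a single rectangular matrix product.

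First I would observe that, modulo the canonical reordering that aligns the tensor-product indices, $T$ is block-diagonal with $m$ blocks $Y_i \otimes \Delta$ of size $4 \times 4$. Hence $T^{-1}$ can be assembled block by block in $O(m)$ arithmetic operations. Next, $C := (I_2 \otimes B) T^{-1}$ can be formed one $4$-column panel at a time: the $i$th panel is the product of the $2n \times 4$ columns of $I_2 \otimes B$ corresponding to line $i$ with the $4 \times 4$ block $(Y_i \otimes \Delta)^{-1}$, and so costs $O(n)$. Summing over $i \in [m]$ gives $O(mn)$ time for $C$, which is well within budget.

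The dominant cost lies in the final product $M = C \cdot (I_2 \otimes B)^\top$, a $(2n \times 4m) \cdot (4m \times 2n)$ multiplication. I would partition the inner dimension $4m$ into $\lceil 2m/n \rceil$ strips of width at most $2n$, perform a square $(2n \times 2n) \cdot (2n \times 2n)$ product for each strip at cost $O(n^\omega)$, and accumulate. Using $n \leq 2m$, the total is $O(\lceil m/n \rceil \cdot n^\omega) = O(m n^{\omega-1})$.

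Finally, inverting the $2n \times 2n$ matrix $M$ costs $O(n^\omega) \leq O(mn^{\omega-1})$ time. Invertibility of $M$ is guaranteed because $M$ is the Schur complement of $T$ in $Z(\bfzero)$: the random substitution makes $T$ nonsingular with high probability by the Schwartz--Zippel lemma applied blockwise (each $4 \times 4$ block $Y_i \otimes \Delta$ has nonzero polynomial determinant), while $Z(\bfzero)$ is nonsingular by \cref{lem:extensible} with $y = \bfzero$ whenever $P$ admits a fractional parity base. The only genuinely non-routine point is the cost accounting for the rectangular product; the rest is bookkeeping.
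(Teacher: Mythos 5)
Your proposal is correct and follows essentially the same route as the paper: exploit the $4\times 4$ block-diagonal structure of $T$ to form $T^{-1}$ and the intermediate product in $O(mn)$ time, then reduce the remaining $(2n\times 4m)\cdot(4m\times 2n)$ product to $O(m/n)$ square products of size $2n$, and invert the $2n\times 2n$ matrix $M$ in $O(n^\omega)\le O(mn^{\omega-1})$ time using $n\le 2m$. Your explicit strip decomposition of the rectangular product and the invertibility remark are just more detailed versions of steps the paper leaves implicit.
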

\begin{proof}
    To compute $M = (I_2 \otimes B) T {(I_2 \otimes B)}^\top$, note that $T$ is a block diagonal matrix with $4\times 4$ blocks.
    Hence, $T^{-1} {(I_2 \otimes B)}^\top$ can be computed in $O(mn)$ time.
    Then, we can compute the matrix product of $I_2 \otimes B$ and $T^{-1} {(I_2 \otimes B)}^\top$ in $O(mn^{\omega-1})$ time.
    Similarly, $M^{-1}$ can be computed in $O(mn^{\omega -1})$ time.
\end{proof}

Next, we consider the $\ell$th iteration of the for loop.
Let $Z := Z(\bfzero)$.
\begin{claim}
    Given $M^{-1}$,
    we can compute $Z[I, J]$ in $O(n^\omega)$ time for any $I, J$ with $|I|, |J| \leq 4n$.
\end{claim}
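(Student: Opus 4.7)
The plan is to apply the Schur complement inversion formula~\eqref{eq:Schur-inverse} to the natural $2 \times 2$ block decomposition of $Z = Z(\bfzero)$. Writing $Z = \begin{bmatrix} O & Q_1 \\ Q_2 & T \end{bmatrix}$ with $Q_1 = I_2 \otimes B$ of size $2n \times 4m$, $Q_2 = -Q_1^\top$, and $T$ the $4m \times 4m$ block-diagonal matrix whose $4 \times 4$ diagonal blocks are $Y_i \otimes \Delta$, I observe that $M = (I_2 \otimes B) T^{-1} (I_2 \otimes B)^\top = -Q_1 T^{-1} Q_2$ is precisely the Schur complement of $T$ in $Z$. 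Hence~\eqref{eq:Schur-inverse} applies and gives
\begin{align}
Z^{-1} = \begin{bmatrix}
M^{-1} & -M^{-1} Q_1 T^{-1} \\
-T^{-1} Q_2 M^{-1} & T^{-1} + T^{-1} Q_2 M^{-1} Q_1 T^{-1}
\end{bmatrix}.
\end{align}

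Next, I decompose $I = I_1 \cup I_2$ and $J = J_1 \cup J_2$ according to the block structure, where $I_1, J_1$ index rows/columns of the top-left $2n \times 2n$ block and $I_2, J_2$ index the bottom-right $4m \times 4m$ block. The four submatrices $Z^{-1}[I_a, J_b]$ will be computed separately. The top-left subblock is simply $M^{-1}[I_1, J_1]$, extracted directly from the given $M^{-1}$ in $O(n^2)$ time. For the remaining three subblocks, I will first compute the partial products $(T^{-1} Q_2)[I_2, *]$ and $(Q_1 T^{-1})[*, J_2]$; since $T$ is block-diagonal with $4 \times 4$ blocks, these require inverting only those $4 \times 4$ blocks intersected by $I_2$ or $J_2$ (at most $|I_2|, |J_2| \leq 4n$ blocks each) and doing $O(n)$ work per block, totalling $O(n^2)$ time.

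The remaining arithmetic consists of multiplications of matrices each of dimension $\leq 4n$ in one direction and $\leq 2n$ in the other: namely $M^{-1}[I_1, *] \cdot (Q_1 T^{-1})[*, J_2]$ and $(T^{-1} Q_2)[I_2, *] \cdot M^{-1}[*, J_1]$ for the two off-diagonal subblocks, and the triple product $(T^{-1} Q_2)[I_2, *] \cdot M^{-1} \cdot (Q_1 T^{-1})[*, J_2]$ (performed as two binary products) for the bottom-right subblock. Each of these products takes $O(n^\omega)$ time via rectangular matrix multiplication. Finally, $T^{-1}[I_2, J_2]$ is added to the bottom-right result by direct block-diagonal extraction in $O(n)$ time. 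Summing all contributions yields the claimed $O(n^\omega)$ running time.

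The main subtlety—rather than a genuine obstacle—will be to avoid constructing the full $4m \times 4m$ matrix $T^{-1}$ explicitly; the block-diagonal structure of $T$ ensures that every restriction required by the above computation can be performed cheaply, making the cost scale with $n$ alone and not with $m$.
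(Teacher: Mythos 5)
Your proposal is correct and follows essentially the same route as the paper: apply the Schur-complement inversion formula~\eqref{eq:Schur-inverse} with $M$ as the Schur complement of the block-diagonal $T$, exploit the $4\times 4$ block-diagonal structure to form the needed restrictions of $T^{-1}(I_2\otimes B)$ in $O(n^2)$ time, and finish with $O(n^\omega)$ multiplications of matrices of dimension $O(n)$. The only difference is that you treat index sets $I,J$ that may intersect both diagonal blocks of $Z$, whereas the paper writes out only the bottom-right case (the one actually used later); this extra generality is harmless and your cost accounting is sound.
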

\begin{proof}
    By~\eqref{eq:Schur-inverse},
    \begin{align}
    Z^{-1}[I,J]
    = T^{-1}[I,J] - {\bigl(T^{-1} (I_2 \otimes B)\bigr)}^\top [I, *] \cdot M^{-1} \cdot \bigl((I_2 \otimes B)T^{-1}\bigr)[*, J].
    \end{align}
    Since ${(T^{-1} (I_2 \otimes B))}^\top [*,I]$ and $((I_2 \otimes B)T^{-1})[*, J]$ are matrices of size at most $2n\times 2n$ computable in $O(n^2)$ time and $M^{-1}$ is a known $2n \times 2n$ matrix, we can compute the product in $O(n^\omega)$ time.
\end{proof}

\begin{claim}\label{claim:compute-Zsub}
    In each iteration, ${Z(y)}^{-1}[L_\ell]$ can be computed in $O(n^\omega)$ time.
\end{claim}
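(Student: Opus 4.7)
The plan is to view $Z(y)$ as a low-rank perturbation of $Z(\bfzero)$ and apply the update formula of \cref{lem:small-update}. Since the $y$-dependence of $Z(y)$ lies entirely in the diagonal blocks $Y_i \otimes \Delta$, the difference $D \coloneqq Z(y) - Z(\bfzero)$ is supported on the $4 \times 4$ principal blocks indexed by $\supp(y)$. Let $S$ denote the union of these $4|\supp(y)|$ row/column indices; then $D$ vanishes outside $S \times S$, and $D[S]$ is block-diagonal with $4 \times 4$ blocks.

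A crucial observation is that $|S| \leq 4n$ throughout \cref{alg:faster}. Indeed, $y$ is maintained as an extensible fractional matroid matching by the invariant of \textsc{BuildFractionalParityBase}, so $|y| \leq \frac{n}{2}$; combined with $y_i \geq \frac12$ for every $i \in \supp(y)$, this yields $|\supp(y)| \leq n$ and hence $|S| \leq 4n$. Writing the update formula out,
\begin{equation*}
    Z(y)^{-1}[L_\ell] = Z(\bfzero)^{-1}[L_\ell] - Z(\bfzero)^{-1}[L_\ell, S]\bigl(I_{|S|} + D[S]\, Z(\bfzero)^{-1}[S]\bigr)^{-1} D[S]\, Z(\bfzero)^{-1}[S, L_\ell],
\end{equation*}
so it suffices to extract the four submatrices of $Z(\bfzero)^{-1}$ appearing on the right-hand side and then perform one $|S| \times |S|$ inversion and a few matrix products.

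Each of these submatrices has both dimensions bounded by $4n$, so by the previous claim (which extracts any $Z(\bfzero)^{-1}[I, J]$ with $|I|, |J| \leq 4n$ in $O(n^\omega)$ time from the precomputed $M^{-1}$), each can be obtained in $O(n^\omega)$ time. The block-diagonal matrix $D[S]$ is assembled in $O(n)$ time from the already-drawn random entries of $U_i$ for $i \in \supp(y)$. The inversion of the $|S| \times |S|$ matrix $I_{|S|} + D[S]\, Z(\bfzero)^{-1}[S]$ and the two remaining matrix products all fit within $O(n^\omega)$, giving the claimed bound. The only delicate point is the cardinality estimate $|\supp(y)| \leq n$; the rest is a routine application of the low-rank update formula combined with the previous claim.
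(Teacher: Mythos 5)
Your proposal is correct and follows the same route as the paper: both treat $Z(y)$ as a perturbation of $Z(\bfzero)$ supported on the principal blocks indexed by $\supp(y)$, bound the size of that support by $O(n)$ using $|y|\le \frac n2$ and $y_i\ge\frac12$, and then apply \cref{lem:small-update} together with the preceding claim to extract the four needed submatrices of $Z(\bfzero)^{-1}$ in $O(n^\omega)$ time. (Only a cosmetic remark: \cref{lem:small-update} defines $D = Z - Z'$, i.e.\ the opposite sign of your $D$, but this does not affect the support or the complexity argument.)
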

\begin{proof}
    $Z(y)$ is identical to $Z$ except $Z(y)[S] \neq Z[S]$ for $S := \supp(y)$.
    Note that $|S| \leq 2|x| \leq n$ as $x \in P$.
    By \Cref{lem:small-update}, ${Z(y)}^{-1}[L_\ell]$ can be computed in $O(n^\omega)$ time, given $Z^{-1}[L_\ell]$, $Z^{-1}[S, L_\ell]$, $Z^{-1}[L_\ell, S]$, and $Z^{-1}[L_{\ell-1}]$.
    These four matrices are computable in $O(n^\omega)$ time by the previous claim.
\end{proof}

Therefore, each step can be carried out in the claimed time complexity.
We now prove \Cref{thm:faster}.

\begin{proof}[Proof of~\Cref{thm:faster}]
    Using \Cref{lem:BFMM} repeatedly, we can check that it outputs $y^*$.
    So it suffices to show the time complexity.
    By \Cref{thm:sparse}, \textsc{BuildFractionalParityBase} for each $L_\ell$ takes $O(n^\omega)$ time since $|L_\ell| = n$.
    In total, \Cref{alg:faster} runs in $\frac{m}{n} \cdot O(n^\omega) = O(mn^{\omega-1})$ time.
    The probability of success is similar to \Cref{alg:simple,alg:sparse}, so we omit it.
\end{proof}

\subsection{Extension and Bit Complexity}\label{subsec:extension}
\subsubsection{An \texorpdfstring{$O(mn^{\omega - 1})$}{O(mn\textasciicircum(omega - 1))}-time algorithm for finding a maximum fractional matroid matching}
So far, we described our faster algorithm for finding a fractional parity base.
We can easily modify it to find a maximum fractional matroid matching as follows.

First, we compute the Schur complement $M = (I_2 \otimes B)T^{-1}{(I_2 \otimes B)}^\top$ in $O(mn^{\omega - 1})$ time as in \Cref{alg:faster}.
Then, we find a maximum nonsingular principal submatrix $M[I]$ with $|I| = 4k$ in $O(n^\omega)$ time, where $k$ is the cardinality of a maximum fractional matroid matching.
Note that $M$ is a $2n \times 2n$ skew-symmetric matrix.
By a similar argument as in \Cref{lem:extensible}, we can show that $Z(y)[I]$ is nonsingular if $y$ is extensible to a maximum fractional matroid matching.
Hence, we can restrict our attention to the principal matrix $Z(y)[I]$ rather than the entire matrix $Z(y)$.
In other words, we run \Cref{alg:faster} with
\begin{align}
    Z'(y) =
    \begin{bNiceArray}[margin]{c|ccc}
        O                              & (I_2 \otimes B_1)[I, *] & \Cdots & (I_2 \otimes B_m)[I, *] \\ \midrule
        -{(I_2 \otimes B_1)}^\top[*,I] & Y_1 \otimes \Delta &  &  \\
         \Vdots                        &  & \Ddots   & \\
        -{(I_2 \otimes B_m)}^\top[*,I] &  &  & Y_m \otimes \Delta
    \end{bNiceArray}.
\end{align}
The time complexity is still $O(mn^{\omega-1})$.

\subsubsection{Las Vegas algorithm}
So far, we devised Monte Carlo algorithms; they always run in polynomial time and find a correct solution with probability at least a constant.
We can easily make them Las Vegas---algorithms that always find a correct solution (if they terminate) and the expected time-complexity is polynomial---as follows.
Run \Cref{alg:faster,alg:2-cover} independently and obtain a maximum fractional matroid matching $y$ and the dominant 2-cover $(S^*, T^*)$.
If $2|y| = \dim S^* + \dim T^*$, then they are optimal primal-dual solutions by \Cref{thm:CLV} and return them.
Otherwise, rerun the algorithms.
Both algorithms succeed with probability at least $\frac{1}{2}$, so the expected number of iterations until both succeed is $O(1)$.
As the time-complexity of each iteration is $O(mn^{\omega+1})$, this yields an $O(mn^{\omega+1})$-time Las Vegas algorithm for finding a maximum fractional linear matroid matching and the dominant 2-cover.

\subsubsection{Bit complexity on rational field}\label{sec:bit-complexity}
We can bound the bit complexity of intermediate numbers of matrices appearing in \Cref{alg:faster} when we run it on the rational field $\Q$.
More precisely, we can show the bit complexity is $O((m+n)\max\{\log(m+n), \agbr{B}\})$, where $\agbr{B}$ denotes the bit complexity of input matrices $B_1, \dots, B_m \in \Q^{n \times 2}$.
Note that this is significant because it is unknown (to the best of our knowledge) whether the known algorithm~\cite{Chang2001a} for fractional matroid matching has polynomial bit complexity.

By the Schwartz-Zippel lemma, it suffices to substitute random integrals from $R = \{0, \dotsc, 16mn-1\}$ to indeterminates so that the algorithm finds a solution with probability at least $\frac{1}{2}$.
For any $y$, the resulting matrix $Z(y)$ contains either an integer with magnitude at most $256m^2n^2$ or an entry from $B_i$ in its entries.
So the bit complexity of any entry of $Z(y)$ is $O(\max\{\log(mn), \agbr{B}\})$.
Hence, by Cramer's rule, its inverse ${Z(y)}^{-1}$ has rational entries with bit complexity $O((m+n)\log(m+n) + (m+n)\max\{\log(mn), \agbr{B}\}) = O((m+n)\max\{\log(mn), \agbr{B}\})$.
The other matrices such as the Schur complement have the asymptotically same bit complexity.

\subsubsection{Weighted fractional linear matroid matching}
Our faster algorithm for the fractional linear matroid parity problem can be used to obtain a faster algorithm for weighted fractional linear matroid parity base via the primal-dual framework of \cite{Gijswijt2013}.
Let $w \in \R^m$ be a nonnegative weight.
The goal of the weighted fractional linear matroid parity problem is to find a fractional parity base $y$ that maximizes $w^\top y$.
Gijswijt et al.~\cite{Gijswijt2013} showed that a weighted problem can be reduced to a sequence of unweighted problems.
Specifically, given an unweighted algorithm that returns a maximum fractional matroid matching and the dominant 2-cover, a weighted problem can be solved by calling the unweighted algorithm $O(n^3)$ times.
They used an unweighted algorithm of \cite{Chang2001a} and obtained an $O(\GPtime)$ time algorithm.
For the linear case, we can simply replace the unweighted algorithm with \Cref{alg:faster,alg:2-cover} and obtain an $O(mn^{\omega+4})$-time randomized algorithm.
Recall that the probability that our algorithms fail is at most $\frac{8mn}{|R|}$ when random values are drawn from $R \subseteq \K$.
By the union bound, the probability that any of $O(n^3)$ executions fails is at most $\frac{O(mn^4)}{|R|}$.
Hence, taking $R$ of size $O(mn^4)$, the algorithm finds a maximum weight fractional parity base with probability at least $\frac{1}{2}$.

\subsubsection{Hidden fractional matroid parity}
Finally, we mention that our algorithm can be used to solve the ``hidden fractional linear matroid parity problem'', which is to compute the nc-rank of a matrix space $\mathcal{A} = \agbr{A_1, \dotsc, A_m} \le \K^{n \times n}$ with the promise that it has a rank-two skew-symmetric basis $a_1 \wedge b_1, \dotsc, a_m \wedge b_m$.
This is a generalization of the ``hidden linear matroid intersection problem'' considered in~\cite{Gurvits2004,Ivanyos2015}.

As a byproduct of our algorithm, we have shown in \Cref{thm:blowup-size-2} that $\ncrank \mathcal{A} = \frac12 \rank \blowup{A'}{2}$ holds with $A' = \sum_{i=1}^m x_i (a_i \wedge b_i$), i.e., the second-order blow-up is sufficient for a linear matrix with rank-two skew-symmetric coefficients.
Furthermore, the blow-up size is independent of the choice of a basis of a matrix space.
Therefore, $\ncrank \mathcal{A} = \modify{\frac12} \ncrank \blowup{A}{2}$ holds as well, where $A = \sum_{i=1}^m x_i A_i$ is the linear matrix in the given basis.
Thus, $\ncrank \mathcal{A}$ can be computed in $O(mn^2 + n^\omega)$ time by random substitution to $\blowup{A}{2}$.
One can also devise deterministic algorithms by computing the nc-rank of $\mathcal A$ with the known deterministic algorithms. We are not sure whether these algorithms can be made faster for matrix spaces with a rank-two skew-symmetric basis using our results; we leave it as future work.

\section*{Acknowledgments}
We thank anonymous reviewers for their helpful comments.
This work was supported by JST ERATO Grant Number JPMJER1903, JST CREST Grant Number JPMJCR24Q2, JST PRESTO Grant Number JPMJPR24K5, JST FOREST Grant Number JPMJFR232L, JST ACT-I Grant Number JPMJPR18U9, JSPS KAKENHI Grant Numbers JP19K20212, JP22K17853, and 24K21315, and JSPS Overseas Research Fellowships.

\printbibliography[heading=bibintoc]

\end{document}